\newcommand{\bb}{\mathbb}
\newcommand{\mbf}{\mathbf}
\newcommand{\scr}{\mathscr}
\newcommand{\mrm}{\mathrm}
\newcommand*{\bfcdot}{\scalebox{0.6}{$\bullet$}}
\newcommand{\Z}{\ensuremath{\mathbf{Z}}}
\newcommand{\Q}{\ensuremath{\mathbf{Q}}}
\newcommand{\C}{\ensuremath{\mathbf{C}}}
\newcommand{\R}{\ensuremath{\mathbb{R}}}
\newcommand{\p}{\ensuremath{\mathfrak{p}}}
\newcommand{\q}{\ensuremath{\mathfrak{q}}}
\newcommand{\n}{\ensuremath{\mathfrak{n}}}
\newcommand{\m}{\ensuremath{\mathfrak{m}}}
\newcommand{\too}{\longrightarrow}								%Arrows
\newcommand{\mapstoo}{\longmapsto}
\newcommand{\ontoo}{\relbar\joinrel\twoheadrightarrow}
\newcommand{\intoo}{\lhook\joinrel\too}
\newcommand{\into}{\hookrightarrow}
\newcommand{\PP}{\ensuremath{\mathbb{P}^1}}					
\newcommand{\St}{\textnormal{St}}
\DeclareMathOperator{\Hom}{Hom}
\DeclareMathOperator{\Aut}{Aut}
\DeclareMathOperator{\Gal}{Gal}
\DeclareMathOperator{\GL}{GL}
\DeclareMathOperator{\PGL}{PGL}
\DeclareMathOperator{\red}{red}
\DeclareMathOperator{\pr}{\mathrm{pr}}
\renewcommand{\det}{\operatorname{det}}
\DeclareMathOperator{\ord}{ord}
\newcommand{\plectic}[0]{\text{\textmarried}}
\def\Xint#1{\mathchoice
{\XXint\displaystyle\textstyle{#1}}%
{\XXint\textstyle\scriptstyle{#1}}%
{\XXint\scriptstyle\scriptscriptstyle{#1}}%
{\XXint\scriptscriptstyle\scriptscriptstyle{#1}}%
\!\int}
\def\XXint#1#2#3{{\setbox0=\hbox{$#1{#2#3}{\int}$ }
\vcenter{\hbox{$#2#3$ }}\kern-.585\wd0}}
\def\mint{\Xint\times}
\theoremstyle{definition}
\newtheorem{theorem}{Theorem}[section]
\newtheorem{lemma}[theorem]{Lemma}
\newtheorem{proposition}[theorem]{Proposition}
\newtheorem{corollary}[theorem]{Corollary}
\newtheorem{definition}[theorem]{Definition}
\theoremstyle{remark}
\newtheorem{remark}[theorem]{Remark}
\newtheorem{example}[theorem]{Example}
\begin{document}

\title{Non-Archimedean plectic Jacobians}

\dedicatory{To Massimo Bertolini, on his 60th birthday}

\author{Michele Fornea}
\address{M.~Fornea, Centre de Recerca Matemàtica, Barcelona, Spain.}
\email{mfornea.research@gmail.com}

\author{Lennart Gehrmann}
\address{L.~Gehrmann, Universität Bielefeld, Bielefeld, Germany.}
\email{gehrmann.math@gmail.com}

%\classification{11F41, 11G25, 11G05.} 

\begin{abstract}
\emph{Plectic Stark--Heegner points} were recently introduced to explore the arithmetic of higher rank elliptic curves: the concept was inspired by Nekov\'a\v{r} and Scholl's plectic philosophy, while the construction is based on Bertolini and Darmon's groundbreaking use of the $p$-adic uniformization of Shimura curves to study the Birch--Swinnerton-Dyer conjecture.
In this note we give a geometric interpretation of \emph{plectic Heegner points} using the non-Archimedean uniformization of higher-dimensional quaternionic Shimura varieties.
To this end, we define and study a \emph{plectic Jacobian functor} from a category of Mumford varieties to topological groups extending the classical Jacobian functor on Mumford curves.
 
\end{abstract}

\maketitle

\tableofcontents

%%%%%%%%%%%%%%%%%%%%%%%%%%%%%%%%%%%%%%%%%%%%%%%%%%%%%%
% Introduction
%%%%%%%%%%%%%%%%%%%%%%%%%%%%%%%%%%%%%%%%%%%%%%%%%%%%%%
\section{Introduction}
 Uniformization theorems are used in geometry to organize the diverse array of ``shapes'' found in nature. For instance, Koebe and Poincar\'e's uniformization theorem shows that any Riemann surface is biholomorphic to a quotient of either the sphere, the plane, or the disk by a proper, free and holomorphic action of a discrete group. 
 These kind of classifications are appealing both for their elegance as well as for highlighting a priori hidden structures of geometric objects, making them quite valuable for number theoretic applications.

\smallskip
 The two most iconic uniformization results for the purposes this article are Tate's non-Archimedean uniformization of elliptic curves, whose discovery initiated the field of $p$-adic geometry, and Cerednik--Drinfeld's $p$-adic uniformization of Shimura curves. As we now explain, these results were ingeniously exploited by Bertolini and Darmon to relate Heegner points to special values of $p$-adic $L$-functions or their derivatives (see \cite{MumfordHeegner}, \cite{CDuniformization}, \cite{BDHidaRational}, \cite{RationalitySH}, \cite{BDP}). Cerednik and Drinfeld gave an explicit $p$-adic uniformization of Shimura curves $X_{/\Q}$ of prime-to-$p$ level associated to an indefinite quaternion algebra $B/\Q$ ramified at $p$. They established an isomorphism of rigid analytic varieties
 \[
X(\C_p)\cong\Gamma\backslash\cal{H}_p
 \]
where $\cal{H}_p=\bb{P}^1(\C_p)\setminus\bb{P}^1(\Q_p)$ denotes Drinfeld's upper half plane, and $\Gamma\subset D^\times$ is an explicit $p$-arithmetic subgroup of the definite quaternion algebra $D/\Q$ obtained from $B/\Q$ by switching invariants at $p$ and $\infty$.
In other words, Cerednik and Drinfeld's uniformization theorem identifies the $p$-adic Schottky group associated to $X_{/\Q_p}$, whose existence is ensured by Mumford's general uniformization theorem. Interestingly, that result can be also parleyed into a uniformization of the Abel--Jacobi morphism $\alpha_{X}\colon X\to \mathcal{J}_X$ (see for example \cite{GvdP}, Chapter VI): there exists a lattice $\Lambda\subset\mrm{H}^1(\Gamma,\C_p^\times)$ and an isomorphism of rigid analytic varieties $\mathcal{J}_X(\C_p)\cong\mrm{H}^1(\Gamma,\C_p^\times)/\Lambda$ such that the diagram 
\begin{equation}\label{pAJ}
\xymatrix{
X(\C_p)\ar[r]^-{\alpha_X}\ar@{-}[d]^-\sim& \mathcal{J}_X(\C_p)\ar@{-}[d]^-\sim \\
\Gamma\backslash\cal{H}_p\ar[r]& \mrm{H}^1(\Gamma,\C_p^\times)/\Lambda
}\end{equation}
commutes.
Moreover, the bottom horizontal arrow can be described explicitly in terms of $p$-adic integration. Suppose now that $E_{/\Q}$ is an elliptic curve admitting a non-constant $\Q$-rational morphism $\mathcal{J}_X\to E$ from the Jacobian of $X_{/\Q}$. Tate's uniformization then ensures the existence of a period $q\in p\Z_p\setminus\{0\}$ and an isomorphism of rigid analytic varieties $E(\C_p)\cong \C_p^\times/q^\Z$  fitting in the commuting diagram
\begin{equation}\label{pmodularp}
\xymatrix{
 \mathcal{J}_X(\C_p)\ar@{-}[d]^-\sim\ar[r]& E(\C_p)\ar@{-}[d]^-\sim \\
 \mrm{H}^1(\Gamma,\C_p^\times)/\Lambda\ar[r]&\C_p^\times/q^\Z
}\end{equation}
where the bottom horizontal map arises from a surjection $\mrm{H}^1(\Gamma,\Z)\to\Z$ induced by the modularity of $E_{/\Q}$. The upshot is that by choosing a $p$-adic embedding $\overline{\Q}\hookrightarrow\C_p$ the combination of \eqref{pAJ} and \eqref{pmodularp} provides an explicit rigid analytic description of any modular parametrization $X(\overline{\Q})\to E(\overline{\Q})$. As pointed out previously, this strategy was prominently developed by Bertolini and Darmon to study the BSD conjecture.

\medskip
During the last decade Nekov\'a\v{r} and Scholl have been pursuing a series of \emph{plectic conjectures} supporting the idea that higher dimensional quaternionic Shimura varieties could be used to study the BSD conjecture for higher rank elliptic curves (see \cite{NekHiddenSymmetries}, \cite{NekRubinfest}, \cite{PlecticNS}). The construction of \emph{plectic Stark--Heegner points} is an attempt to provide a theoretical and computational way to test those expectations. Its inception was the realization that the Cerednik--Drinfeld's uniformization of Shimura curves had been generalized  to quaternionic Shimura varieties of arbitrary dimension in \cite{BoutotZink} and \cite[Section 5]{Varshavsky}.
An optimist could then believe that an explicit rigid analytic construction of \emph{global} points on higher rank elliptic curves would be found  by appropriately generalizing the bottom rows of diagrams \eqref{pAJ} and \eqref{pmodularp}. Eventually, plectic Stark--Heegner points were constructed by cohomological means that could deal with arbitrary quadratic extensions of number fields in a uniform fashion. That construction lacked a direct connection to geometry, but it allowed for numerical experiments (\cite{PlecticInvariants}), and to easily relate plectic Stark--Heegner points to higher derivatives of anticyclotomic $p$-adic $L$-functions (\cite{PlecticHeegner}, Theorem A).

\smallskip
 \noindent On the occasion of Bertolini's 60th birthday, we present a construction of \emph{plectic Heegner points} -- i.e., plectic Stark--Heegner points associated to quadratic CM extensions -- from the geometric point of view that first inspired our work (see Section \ref{Sec: Modularity}). We hope it will open new perspectives on the results obtained in \cite{Polyquadratic}, \cite{DarmonFornea} and \cite{IwasawaPlectic}. The technical results of the article focus on the structure of ``plectic subgroups'' of $\PGL_2(F_S)$. In future work we hope to leverage those general considerations to 
\begin{itemize}
\item[$\bfcdot$] construct \emph{plectic Heegner cycles} for higher weight modular forms,
\item[$\bfcdot$] study \emph{plectic comparison theorems} for quaternionic Shimura varieties with Iwahori level at $p$, complementing the main result of \cite{LZplectic}, and
\item[$\bfcdot$] understand the relation between plectic Heegner points over function fields, Heegner--Drinfeld cycles (\cite{ShtukasTaylor}, \cite{ShtukasTaylorII}), and the theory of higher modularity (\cite{HigherMod}).
\end{itemize} 
For the interested reader, we note that there is an Archimedean twin of this paper (\cite{PlecticJacobians}) where an analogue inquiry has been carried out using plectic Hodge theory.

\medskip
\noindent We conclude the introduction with an overview of the main sections. In Sections \ref{Sec: Plectic subgroups} and \ref{Sec: Plectic Jacobians} we work over an arbitrary local field of residue characteristic $p$. Let $\C$ denote the completion of an algebraic closure of either $\Q_p$ or $\mbf{F}_p(\hspace{-0.6mm}(T)\hspace{-0.6mm})$.

Section \ref{Sec: Plectic subgroups} introduces the notion of \emph{plectic groups}.
These are certain discrete subgroups $\Gamma\subseteq\PGL_2(\C)^r$, for some integer $r$, whose  discrete points $\Omega \subseteq \PP(\C)^r$ factors as a product
\[
\Omega=\Omega_1\times\cdots\times\Omega_r
\]
with each $\Omega_i\subseteq \PP(\C)$ an analytic set with compact complement.
Through a reduction to the case $r=1$, the analytic subsets $\Omega_i$'s are shown to be of the same shape as the points of discreteness for the action of $p$-adic Schottky subgroups of $\PGL_2(\C)$.
This permits the definition a building $\mathcal{T}$, which is a product of trees, on which $\Gamma$ acts.
Under the assumption that the quotient $\Gamma\backslash\mathcal{T}$ is a finite complex, in which case $\Gamma$ is called \emph{normal}, several finiteness properties for $\Gamma$ are then deduced. For example, $\Gamma$ is shown to be finitely presented and a virtual duality group.
Moreover, an explicit model for its duality module is given.

Section \ref{Sec: Plectic Jacobians} introduces the notion of \emph{Mumford varieties} whose connected components are quotients of the form $\Gamma\backslash\Omega$ where $\Gamma$ is a torsion-free normal plectic group.
 Plectic Jacobians of Mumford varieties are then defined and shown to be functorial with respect to ``Hecke correspondences'' (see Section \ref{Hecke} for their definition).

Finally, Section \ref{Sec: Modularity} explains how the theory of plectic Jacobians of quaternionic Shimura varieties leads to a generalization of the bottom rows of diagrams \eqref{pAJ} and \eqref{pmodularp}.

\bigskip
\noindent \textbf{Acknowledgements.}
While working on this article the first author was supported by the MTM grant PID2020-118236GB-I00 from Universitat Aut\`onoma de Barcelona, and the Maria de Maeztu grant from the Centre de Recerca Matem\`atica.
The second named author received funding from the Maria Zambrano Grant for the attraction of international talent in Spain and from Deutsche Forschungsgemeinschaft (DFG) via TRR 358 ``Integral Structures in Geometry and Representation Theory''.

%%%%%%%%%%%%%%%%%%%%%%%%%%%%%%%%%%%%%%%%%%%%%%%%%%%%%%%%%%%%%%%%%%%%%%%%
% Plectic subgroups
%%%%%%%%%%%%%%%%%%%%%%%%%%%%%%%%%%%%%%%%%%%%%%%%%%%%%%%%%%%%%%%%%%%%%%%%

\section{Plectic subgroups}\label{Sec: Plectic subgroups}
A \emph{local \'etale algebra} is a finite \'etale algebra over a local field, or in other words,
an algebra of the form
\[F_\Sigma=\prod_{\p\in \Sigma}F_\p\]
with $\Sigma$ is a finite set and the fields $F_\p$ are local fields of the same characteristic and residue characteristic.
We will always assume that the residue characteristic is a fixed prime $p$.
Denote by $\C$ the completion of a fixed algebraic closure of $\Q_p$ (resp.~$\mbf{F}_p(\hspace{-0.6mm}(T)\hspace{-0.6mm})$) in case of characteristic $0$ (resp.~characteristic $p$).

\begin{definition}
An \emph{embedding} of a local \'etale algebra $F_\Sigma$ is a tuple $\iota=(\iota_\p)_{\p\in \Sigma}$ of ring homomorphisms $\iota_\p \colon F_\p \into \C$.
We call an \emph{embedded local \'etale algebra} any local \'etale algebra with the data of an embedding.
\end{definition}

Let be $(F_\Sigma,\iota)$ an embedded local \'etale algebra.
For any $\p\in\Sigma$ the valuation ring of $F_\p$ is denoted by $\mathcal{O}_\p$ with maximal ideal $\p$.
Moreover, we put
\[K_\p=\PGL_2(\mathcal{O}_\p),\]
and write
\[\mrm{Iw}_\p\subseteq \PGL_2(\mathcal{O}_\p)\]
for the Iwahori subgroup of all integral matrices which are congruent to an upper triangular matrix modulo $\p$.
For any subset $S \subseteq \Sigma$ we set
\[K_{S}=\prod_{\p\in S}K_\p\quad \mbox{and}\quad\mrm{Iw}_{S}=\prod_{\p\in S}\mrm{Iw}_\p.\]
Given a non-empty subset $S\subseteq \Sigma$ and a subfield $E\subseteq \C$ containing $\iota_\p(F_\p)$ for all $\p\in S$ we put
\[F_{S}=\prod_{\p\in S} F_\p \quad \mbox{and}\quad E_{S}=\prod_{\p\in S} E.\]
We usually abbreviate $E_\p:=E_{\{\p\}}$.
The embedding $\iota$ induces a continuous action of the group $\PGL_2(F_{S})=\prod_{\p\in S} \PGL_2(F_\p)$ on the product
\[\PP(E_{S})=\prod_{\p \in S} \PP(E)\]
of projective lines.
Given subsets $S'\subseteq S\subseteq \Sigma$ we often view $\PGL_2(F_{S'})$ as a subgroup of $\PGL_2(F_{S})$ via the canonical embedding, and we write
\[
\pr_{S'}\colon\PGL_2(F_{S}) \too \PGL_2(F_{S'})
\]
for the canonical projection.

\subsection{Limit points}
For the remainder of this chapter we fix an embedded local \'etale algebra algebra $(F_\Sigma,\iota)$. 
We collect basic facts about the set of limit points of the action of subgroups of $\PGL_2(F_S)$ on $\PP(\C_S)$ for subsets $S\subseteq \Sigma$.
 \begin{definition}
	For a subgroup $\Gamma$ of $\PGL_2(F_{S})$, we define $\mathcal{L}^{S}_{\Gamma}$ to be the set of limit points of $\Gamma$ in $\PP(\C_{S})$, i.e., the set of elements $x\in\PP(\C_{S})$ such that there exists $y\in\PP(\C_{S})$ and distinct elements $\{\gamma_j\}_{j\ge1}\subseteq\Gamma$ such that $\gamma_j(y)$ converges to $x$.
 \end{definition}
\noindent We may consider a subgroup $\Gamma\subseteq \PGL_2(F_{S})$ also as a subgroup of $\PGL_2(F_{\Sigma})$ via the canonical embedding.
It follows directly from the definition that
\begin{align}\label{changeofset}
\mathcal{L}_{\Gamma}^\Sigma=\mathcal{L}^{S}_{\Gamma}\times \PP(\C_{\Sigma\setminus S}).
\end{align}
The compactness of the projective line implies that the set of limit points is rarely empty:
\begin{proposition}\label{nolimits}
Let $\Gamma\subseteq \PGL_2(F_S)$ a subgroup with $\mathcal{L}_\Gamma^S=\emptyset$.
Then $\Gamma$ is finite.
\end{proposition}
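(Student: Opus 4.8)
The plan is to prove the contrapositive: if $\Gamma \subseteq \PGL_2(F_S)$ is infinite, then $\mathcal{L}_\Gamma^S \neq \emptyset$. First I would reduce to the case where $S = \{\p\}$ is a singleton. Indeed, using the projections $\pr_{\{\p\}}\colon \PGL_2(F_S) \to \PGL_2(F_\p)$, at least one image $\pr_{\{\p\}}(\Gamma)$ must be infinite (since $\Gamma$ embeds into the product $\prod_{\p \in S}\pr_{\{\p\}}(\Gamma)$, and a finite product of finite groups is finite). If $\pr_{\{\p\}}(\Gamma)$ has a limit point $x_\p \in \PP(\C)$, witnessed by $\gamma_j(y_\p) \to x_\p$ for distinct $\gamma_j$, then I can promote this to a limit point of $\Gamma$ in $\PP(\C_S)$: lift each $\gamma_j$ to an element of $\Gamma$ (after passing to an infinite subset on which the lift is injective, using that the fibers of $\pr_{\{\p\}}$ restricted to $\Gamma$ are... well, one must be a bit careful here — if fibers are infinite one argues directly on another coordinate or uses that $\Gamma$ infinite forces \emph{some} coordinate projection to be infinite-to-one-free; in any case one extracts an infinite sequence of distinct elements of $\Gamma$ whose $\p$-component still moves $y_\p$ toward $x_\p$), and use compactness of $\PP(\C)$ in the remaining coordinates to extract a convergent subnet, producing a genuine limit point of $\Gamma$.

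For the core case $\Gamma \subseteq \PGL_2(F_\p)$ infinite, the idea is to exploit compactness and the dynamics of $\PGL_2$ on $\PP^1$. Pick any point $y \in \PP(\C)$ and consider the orbit $\{\gamma(y) : \gamma \in \Gamma\}$. If this orbit is infinite, then by compactness of $\PP(\C)$ it has an accumulation point $x$, and choosing distinct $\gamma_j$ with $\gamma_j(y) \to x$ exhibits $x \in \mathcal{L}_\Gamma^\p$. So it remains to handle the case where every $\Gamma$-orbit on $\PP(\C)$ is finite. In that case, fixing a point $y$ with finite orbit, the stabilizer $\Gamma_y$ has finite index in $\Gamma$, hence is infinite; replacing $\Gamma$ by $\Gamma_y$ (which does not enlarge the limit set, and a limit point of a finite-index subgroup is a limit point of $\Gamma$) we may assume $\Gamma$ fixes $y$. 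Conjugating so that $y = \infty$, we get $\Gamma \subseteq \mathrm{B}(\C)$, the image in $\PGL_2$ of upper-triangular matrices. Now pick a second point $y' \neq y$; its orbit is also finite, so after again passing to a finite-index (still infinite) subgroup we may assume $\Gamma$ fixes $y'$ too, i.e. $\Gamma$ is contained in the image of the diagonal torus, $\Gamma \subseteq \{\mathrm{diag}(a,1)\}/\sim \cong F_\p^\times$. But an infinite subgroup of $F_\p^\times$ cannot have bounded image: either it is unbounded (some $|a_j| \to \infty$, forcing $\gamma_j(z) = a_j z \to \infty$ for $z \neq 0$, so $\infty$ is a limit point) or it accumulates somewhere in $F_\p^\times$ or at $0$ by compactness of $\OO_\p$, again producing convergence of translates. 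Either way $\mathcal{L}_\Gamma^\p \neq \emptyset$, a contradiction.

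The main obstacle I anticipate is the bookkeeping in the reduction from general $S$ to a singleton — specifically, ensuring that when I lift a limit-point-witnessing sequence from one coordinate projection back to $\Gamma$, I retain infinitely many \emph{distinct} elements while controlling all coordinates simultaneously; the clean way is to first fix $y = (y_\p)_{\p \in S} \in \PP(\C_S)$, note that if the full $\Gamma$-orbit of $y$ is infinite then compactness of $\PP(\C_S) = \prod_\p \PP(\C)$ immediately gives an accumulation point, and if every orbit is finite then (as in the singleton case, iterating over the finitely many coordinates) $\Gamma$ is virtually contained in a product of tori $\prod_\p F_\p^\times$, and an infinite subgroup of such a product has unbounded or accumulating image in at least one factor — which, combined with $\eqref{changeofset}$, yields a limit point. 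This makes the singleton analysis the real content and the reduction a matter of organizing the orbit-finiteness dichotomy uniformly.
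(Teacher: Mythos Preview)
There is a genuine gap: you repeatedly invoke ``compactness of $\PP(\C)$'' (and of $\PP(\C_S)$), but in this paper $\C$ denotes $\C_p$ (or its equal-characteristic analogue), which is \emph{not} locally compact, so $\PP(\C)$ is not compact. Thus the step ``if the orbit is infinite, by compactness it has an accumulation point'' fails as written, and with it both the singleton case and the reduction from general $S$.

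The paper's proof repairs exactly this point, and in doing so bypasses almost all of your case analysis. One chooses a local field $E\subseteq\C$ finite over the prime field and containing every $\iota_\p(F_\p)$; then $\PP(E_S)$ \emph{is} compact and is preserved by $\Gamma\subseteq\PGL_2(F_S)$. Emptiness of $\mathcal{L}_\Gamma^S$ forces every $\Gamma$-orbit in $\PP(E_S)$ to be discrete, hence finite. Now, rather than iterating through stabilizers and reducing to a torus, one simply picks (after enlarging $E$) a point $x\in\PP(E_S)$ whose stabilizer in $\PGL_2(F_S)$ is finite --- e.g.\ each coordinate $x_\p$ generating an extension of $F_\p$ of degree at least $3$ forces the stabilizer to be trivial --- and orbit--stabilizer finishes. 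Your torus reduction and the dichotomy on $|a_j|$ are correct in spirit but become unnecessary once the compactness issue is handled this way.
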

\begin{proof}
Let $E\subseteq \C$ be a subfield finite over $F$ that contains the compositum of the fields $\iota_\p(F_\p)$ for $\p\in S$.
By assumption, for any $x\in \PP(E_S)$ the orbit $\Gamma.x$  is a discrete subset of the compact space $\PP(E_S)$. Thus, it is finite.
Choosing $E$ large enough there exists a point $x\in \PP(E_S)$ whose stabilizer in $\PGL_2(F_S)$ is finite, hence $\Gamma$ is finite as well.
\end{proof}

Commensurable group have the same set of limit points:
\begin{lemma}\label{finiteindex}
Let $\Gamma,\Gamma'\subseteq \PGL_2(F_{S})$ be commensurable subgroups, then
\[
\mathcal{L}_{\Gamma'}^S=\mathcal{L}_\Gamma^S.
\]	
\end{lemma}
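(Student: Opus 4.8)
The plan is to reduce to the case of a finite-index inclusion and then run a pigeonhole argument on cosets. Since $\Gamma$ and $\Gamma'$ are commensurable, the intersection $H:=\Gamma\cap\Gamma'$ has finite index in each of them; hence it suffices to prove the statement under the extra hypothesis $\Gamma'\subseteq\Gamma$ with $[\Gamma:\Gamma']<\infty$, and then apply it to $H\subseteq\Gamma$ and to $H\subseteq\Gamma'$ to conclude $\mathcal{L}^S_\Gamma=\mathcal{L}^S_H=\mathcal{L}^S_{\Gamma'}$.

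So assume $\Gamma'\subseteq\Gamma$ is of finite index. The inclusion $\mathcal{L}^S_{\Gamma'}\subseteq\mathcal{L}^S_{\Gamma}$ is immediate from the definition: a sequence of pairwise distinct elements of $\Gamma'$ witnessing a limit point is in particular such a sequence in $\Gamma$. For the reverse inclusion I would take $x\in\mathcal{L}^S_\Gamma$, choose $y\in\PP(\C_S)$ and pairwise distinct $\gamma_j\in\Gamma$ with $\gamma_j(y)\to x$, and write $\Gamma$ as a finite disjoint union of right cosets $\Gamma=\bigsqcup_{i=1}^{n}\Gamma'\delta_i$. By the pigeonhole principle infinitely many of the $\gamma_j$, say $\gamma_{j_k}$, lie in a single coset $\Gamma'\delta$; setting $\eta_k:=\gamma_{j_k}\delta^{-1}\in\Gamma'$ and $y':=\delta(y)\in\PP(\C_S)$ one has $\eta_k(y')=\gamma_{j_k}(y)\to x$, and the $\eta_k$ are pairwise distinct because the $\gamma_{j_k}$ are. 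Hence $x\in\mathcal{L}^S_{\Gamma'}$, which finishes the proof.

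I do not expect any genuine obstacle here; the argument is essentially formal. The one place to be slightly careful is the use of right cosets rather than left cosets: with right cosets the perturbation of the base point is absorbed into the single fixed element $\delta$, so the convergent sequence $\gamma_{j_k}(y)$ is literally reused with the new base point $\delta(y)$. Had I decomposed into left cosets instead, I would only obtain that $\delta^{-1}(x)$ lies in $\mathcal{L}^S_{\Gamma'}$, and would then need the extra remark that $\mathcal{L}^S_{\Gamma'}$ is invariant under the normalizer of $\Gamma'$ in $\PGL_2(F_S)$ — true, but an unnecessary detour.
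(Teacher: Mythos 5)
Your proof is correct and follows essentially the same route as the paper: reduce to a finite-index inclusion, then pigeonhole on the finitely many cosets $\Gamma'\backslash\Gamma$ to extract a subsequence $\gamma_{j_k}=\eta_k\delta$ with $\eta_k\in\Gamma'$, and absorb $\delta$ into the base point. The paper's argument is identical in substance (it writes $\gamma_j=\delta_j\cdot g$ and sets $y_0=g(y)$).
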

\begin{proof}
It is enough to assume that $\Gamma'$ is a finite index subgroup of $\Gamma$.
	Clearly $\mathcal{L}_{\Gamma'}^S\subseteq\mathcal{L}_\Gamma^S$.
	 Let now $x$ be an element of $\mathcal{L}_\Gamma^S$.
	By definition, there exists $y\in\PP(\C_{S})$ and distinct elements $\{\gamma_j\}_j\subseteq\Gamma$ such that $\gamma_j(y)$ converges to $x$.
	Since there are finitely many cosets $\Gamma'\backslash\Gamma$, up to considering a subsequence, there exists a fixed $g\in\Gamma$ such that for every index 
	\[
	\gamma_j=\delta_j\cdot g\quad\mbox{for some}\quad \delta_j\in\Gamma'.
	\]
	We put $y_0=g(y)$.
	Then by construction the sequence $\delta_j(y_0)$ converges to $x$.
\end{proof}

If $\Gamma \subseteq \PGL_2(F_{S})$ is not discrete, we clearly have that $\mathcal{L}_{\Gamma}^S= \PP(\C_{S})$. Therefore, the set of limit points is most interesting for discrete subgroups.
Let us assume momentarily that $\Gamma$ is a discrete subgroup of $\PGL_2(F_\p)$ for some $\p\in \Sigma$.
From (1.6.4) of \cite[Section I]{GvdP}, we know that $\mathcal{L}_{\Gamma}^{\{\p\}}$ is a subset of $\PP(F_\p)$.
A mild generalization of the proof of \textit{loc.cit.}~provides the following rough description of the shape of limits sets of general discrete subgroups of $\PGL_2(F_{S})$.
\begin{lemma}\label{discrete}
Let  $\Gamma \subseteq \PGL_2(F_{S})$ be a discrete subgroup, then
$$\mathcal{L}_\Gamma^S\subseteq \bigcup_{\p\in S} \Big(\PP(F_\p) \times \PP(\C_{S\setminus \{\p\}})\Big).$$
\end{lemma}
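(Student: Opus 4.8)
The plan is to argue by contradiction, exploiting the fact that ``Cartan-large'' elements of $\PGL_2(F_\p)$ act on $\PP(\C)$ almost like constant maps taking values in $\PP(F_\p)$; this is the mild generalization of the argument for (1.6.4) in \cite[Section I]{GvdP} alluded to above. So let $x=(x_\p)_{\p\in S}$ be a limit point of $\Gamma$, fix $y=(y_\p)_{\p\in S}\in\PP(\C_S)$ and pairwise distinct $\gamma_j\in\Gamma$ with $\gamma_j(y)\to x$, and set $\gamma_{j,\p}=\pr_{\{\p\}}(\gamma_j)$. It suffices to produce a single place $\p\in S$ with $x_\p\in\PP(F_\p)$, so I would assume for contradiction that $x_\p\notin\PP(F_\p)$ for every $\p\in S$ and deduce that $\{\gamma_j\}_{j\ge1}$ is relatively compact in $\PGL_2(F_S)$, which is incompatible with $\Gamma$ being discrete and the $\gamma_j$ being distinct.

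The core of the argument is a purely local dichotomy that I would isolate first: \emph{for each $\p\in S$, if $\{\gamma_{j,\p}\}_{j\ge1}$ is not relatively compact in $\PGL_2(F_\p)$, then $x_\p\in\PP(F_\p)$.} To prove it I would use the Cartan decomposition $\PGL_2(F_\p)=K_\p\cdot\{\mathrm{diag}(\varpi_\p^n,1):n\ge0\}\cdot K_\p$ (for a uniformizer $\varpi_\p$ of $\OO_\p$) together with the compactness of $K_\p=\PGL_2(\OO_\p)$. Failure of relative compactness forces the exponents to be unbounded, so after passing to a subsequence one may write $\gamma_{j,\p}=\kappa_j\,\mathrm{diag}(\varpi_\p^{n_j},1)\,\kappa_j'$ with $n_j\to\infty$ and, by compactness of $K_\p$, with $\kappa_j\to\kappa$ and $\kappa_j'\to\kappa'$ in $K_\p$. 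A coordinate computation shows that $\mathrm{diag}(\varpi_\p^{n},1)$ tends to the constant map $[0:1]$ uniformly on compact subsets of $\PP(\C)\setminus\{[1:0]\}$; combined with continuity of the $K_\p$-action this gives $\gamma_{j,\p}(w)\to\kappa([0:1])=:z_+$ for every $w\ne\kappa'^{-1}([1:0])=:z_-$, and $z_+,z_-\in\PP(F_\p)$ since $\kappa,\kappa'\in\PGL_2(F_\p)$. If $y_\p\ne z_-$ this yields $x_\p=\lim_j\gamma_{j,\p}(y_\p)=z_+\in\PP(F_\p)$; and if $y_\p=z_-$ then $y_\p\in\PP(F_\p)$, hence $\gamma_{j,\p}(y_\p)\in\PP(F_\p)$ for all $j$ and therefore $x_\p\in\PP(F_\p)$, using that $\PP(F_\p)$ is closed in $\PP(\C)$.

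Granting this, the contradiction is short: under the assumption that $x_\p\notin\PP(F_\p)$ for all $\p\in S$, each $\{\gamma_{j,\p}\}_{j\ge1}$ is relatively compact, hence so is $\{\gamma_j\}_{j\ge1}$ in the finite product $\PGL_2(F_S)=\prod_{\p\in S}\PGL_2(F_\p)$; extracting a convergent subsequence $\gamma_{j_k}\to g$ one gets $\gamma_{j_{k+1}}\gamma_{j_k}^{-1}\to e$, which cannot happen for distinct elements of a discrete subgroup. The only step I expect to require genuine care is the local dichotomy — specifically, describing the limiting behaviour of $\mathrm{diag}(\varpi_\p^{n_j},1)$ and keeping track of the exceptional point $z_-$, which is precisely where one needs $\PP(F_\p)$ to be both $\PGL_2(F_\p)$-stable and closed in $\PP(\C)$. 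The reduction from the local statement to the global one and the closing compactness argument are purely formal.
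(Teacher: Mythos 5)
Your argument is correct, and it reaches the conclusion by a somewhat different path than the paper. The paper follows \cite[Section~I, (1.6.4)]{GvdP} directly: after passing to a subsequence it splits $S$ into the set $S_1$ of places where the components $(\gamma_j)_\p$ are pairwise distinct and the set $S_2$ where they are constant, lifts the $\gamma_j$ to $\GL_2(F_S)$ and rescales each $\p$-component (for $\p\in S_1$) so that the lifts converge in $\mathrm{M}_2(F_\p)$, and then uses discreteness of $\Gamma$ to force the limiting matrix at some $\p\in S_1$ to be singular, which pins the $\p$-component of the limit point to $\PP(F_\p)$. You instead run a proof by contradiction built on a cleanly isolated local dichotomy, phrased via the Cartan decomposition: a sequence in $\PGL_2(F_\p)$ is either relatively compact or, after extracting a subsequence with Cartan exponents tending to infinity, it contracts everything outside a single $F_\p$-rational point toward another $F_\p$-rational point. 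This is the same phenomenon the paper's matrix-scaling argument captures (the rescaled singular limit matrix is exactly the rank-one ``contraction'' operator), but packaging it as ``not relatively compact $\Rightarrow$ $x_\p\in\PP(F_\p)$'' lets you dispense with the $S_1/S_2$ split entirely, since constant or bounded components are automatically handled as relatively compact, and it makes the final step a one-line compactness/discreteness contradiction via $\gamma_{j_{k+1}}\gamma_{j_k}^{-1}\to e$. The trade-off is that your route is contrapositive and leans on the Cartan decomposition, whereas the paper's is direct and stays closer to elementary matrix manipulations; both are standard and the work of dealing with the exceptional point ($z_-$ in your notation, respectively the scaled column where the limit is $\infty$ in theirs) is essentially identical.
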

\begin{proof}
Let $x$ be an element of $\mathcal{L}_\Gamma^S$.
We have to show that there exists $\p\in S$ such that the $\p$-th component of $x$ is $F_\p$-rational.
From the definition of limit set, there exists a point $y\in \PP(\C_{S})$ and a sequence of distinct elements $\{\gamma_j\}_j\subseteq \Gamma$ such that $\gamma_j(y)$ converges to $x$.
At the cost of replacing $\{\gamma_j\}_j$ with a subsequence, we may decompose $S=S_1\cup S_2$ such that the sequence of elements $(\gamma_j)_\p$ consists of distinct elements for all $\p \in S_1$ and are constant for all $\p\in S_2$.
By assumption the subset $S_1$ is not empty.
Then, as in the proof of (1.6.4) of \cite[Section I]{GvdP},  we can find preimages $\widetilde{\gamma_j}$ of $\gamma_j \in \GL_2(F_S)$ such that
\begin{itemize}
\item[$\bfcdot$] the sequence $(\widetilde{\gamma_j})_\p$ is constant for all $\p \in S_2$, and
\item[$\bfcdot$] for every $\p\in S_1$ the sequence $(\widetilde{\gamma}_j)_\p$ converges in $\mrm{M}_2(F_\p)$ (after a change of variables) to a matrix of the form
$$\begin{pmatrix}
a_\p & b_\p \\
1 & d_\p
\end{pmatrix}.$$
\end{itemize}
Since the subgroup $\Gamma$ is discrete, the limit of the converging sequence $\{\widetilde{\gamma}_j\}_j$ is not an element of $\GL_2(F_S)$, i.e., there exists a $\p \in S_1$ such that $a_\p d_\p = b_\p$.
In particular, the limit of the sequence $(\gamma_j(y))_\p$ is either $a_\p$ or $\infty$, which proves the claim.
\end{proof}

\subsection{Plectic groups}
Consider a discrete subgroup of $\PGL_2(F_{S})$ of the form
\[
\Gamma=\prod_{\p \in S} \Gamma_\p,
\]
where $\Gamma_\p\subseteq \PGL_2(F_\p)$ is a discrete subgroup for each $\p\in S$.
An easy calculation shows that the set of limit points of $\Gamma$ is given by
\[
\mathcal{L}^{S}_{\Gamma} = \bigcup_{\p \in S} \Big(\mathcal{L}_{\Gamma_\p}^{\{\p\}}\times\PP(\C_{S\setminus\{\p\}})\Big).
\]
We are interested in studying the properties of all discrete subgroups of $\PGL_2(F_{S})$ whose set of limit points has the same structure.

\begin{definition}
A subgroup $\Gamma\subseteq \PGL_2(F_{S})$ is called \emph{plectic}, if there exist subsets $\mathcal{L}_{\Gamma,\p}\subseteq\PP(F_\mathfrak{p})$ for $\p\in S$ such that
		\[
		\mathcal{L}_{\Gamma}^{S}=\bigcup_{\p\in S} \Big(\mathcal{L}_{\Gamma,\p}\times \PP(\C_{S\setminus \{\p\}})\Big).
		\]
		In that case, we define the subset of \emph{distinguished limit points} as
		$$\mathcal{L}_{\Gamma,S}:=\prod_{\p\in S} \mathcal{L}_{\Gamma,\p}.$$
\end{definition}
\begin{remark}
    The set of distinguished limit points can be thought of as a $p$-adic analogue of the notion of Bergman--Shilov boundary in the theory of holomorphic functions in several complex variables (cf.~\cite{Fuks}, Ch.~III--15). 
\end{remark}
Equation \eqref{changeofset} implies that a subgroup $\Gamma\subseteq \PGL_2(F_{S})$ is plectic if and only if it is plectic (with $\mathcal{L}_{\Gamma,\p}=\emptyset$ for all $\p\in \Sigma\setminus S$) when viewed as a subgroup of $\PGL_2(F_\Sigma)$.
By Lemma \ref{finiteindex} every subgroup commensurable with a plectic subgroup is plectic.
As noted earlier, every plectic subgroup is automatically discrete.
The following example shows that the converse does not hold in general.

\begin{example}
The subgroup $\Gamma\subseteq \PGL_2(\Q_p)\times \PGL_2(\Q_p)$ generated by the tuple
$$\left(\begin{pmatrix}p & 0 \\ 0 & 1\end{pmatrix},\begin{pmatrix}p & 0 \\ 0 & 1\end{pmatrix}\right).$$
is discrete but not plectic.
Indeed, its set of limit points in $\PP(\C)\times \PP(\C)$ is
$$\mathcal{L}_\Gamma=\{(0,0),(\infty,\infty)\}.$$
\end{example}

Evidently, the easiest way of generating plectic subgroups is taking products of plectic subgroups:
\begin{lemma}\label{product}
Let $S=S_1\cup S_2$ with $S_1\cap S_2=\emptyset$.
The product $\Gamma=\Gamma_{1}\times \Gamma_{2}$ of two plectic subgroups $\Gamma_{i}\subseteq \PGL_2(F_{S_i})$, $i=1,2$, is a plectic subgroup of $\PGL_2(F_{S})$ with set of distinguished limit points given by
$$\mathcal{L}_{\Gamma,S}=\mathcal{L}_{\Gamma_{1,S_1}}\times\mathcal{L}_{\Gamma_{2,S_2}}.$$
\end{lemma}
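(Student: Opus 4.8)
The plan is to bootstrap from the limit-point computation for products recorded just before the definition of plectic groups, now applied to the two-block partition $S=S_1\sqcup S_2$ rather than to the partition into singletons, and then to substitute the plectic descriptions of $\mathcal{L}_{\Gamma_1}^{S_1}$ and $\mathcal{L}_{\Gamma_2}^{S_2}$. Note first that $\Gamma=\Gamma_1\times\Gamma_2$ is automatically discrete in $\PGL_2(F_S)=\PGL_2(F_{S_1})\times\PGL_2(F_{S_2})$, so only the shape of the limit set is at issue. The crux will be the identity
\[
\mathcal{L}_\Gamma^S=\bigl(\mathcal{L}_{\Gamma_1}^{S_1}\times\PP(\C_{S_2})\bigr)\cup\bigl(\PP(\C_{S_1})\times\mathcal{L}_{\Gamma_2}^{S_2}\bigr).
\]

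For this identity I would check both inclusions directly. The inclusion $\supseteq$ is easy: if $x_1\in\mathcal{L}_{\Gamma_1}^{S_1}$ is witnessed by a point $y_1$ and distinct elements $\delta_j\in\Gamma_1$ with $\delta_j(y_1)\to x_1$, then for any $x_2\in\PP(\C_{S_2})$ the distinct elements $(\delta_j,1)\in\Gamma$ satisfy $(\delta_j,1)(y_1,x_2)=(\delta_j(y_1),x_2)\to(x_1,x_2)$, so $(x_1,x_2)\in\mathcal{L}_\Gamma^S$; the symmetric argument gives $\PP(\C_{S_1})\times\mathcal{L}_{\Gamma_2}^{S_2}\subseteq\mathcal{L}_\Gamma^S$. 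For $\subseteq$, let $x=(x_1,x_2)\in\mathcal{L}_\Gamma^S$ be witnessed by $y=(y_1,y_2)$ and distinct elements $\gamma_j=(\gamma_j^{(1)},\gamma_j^{(2)})$. Passing to a subsequence I may assume that each coordinate sequence $(\gamma_j^{(i)})_j$ is either constant or consists of pairwise distinct elements (either some value recurs infinitely often, or infinitely many values occur), and since the $\gamma_j$ stay distinct at least one coordinate sequence is of the second kind; if $(\gamma_j^{(1)})_j$ is, then $\gamma_j^{(1)}(y_1)\to x_1$ exhibits $x_1\in\mathcal{L}_{\Gamma_1}^{S_1}$, so $x$ lies in the first set on the right, and the other case is symmetric. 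This is precisely the argument behind the singleton-partition computation; indeed, specialising the identity to $S_1$ a singleton and inducting on $|S|$ recovers it.

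Finally I would unwind the hypotheses. Writing $\mathcal{L}_{\Gamma_i}^{S_i}=\bigcup_{\p\in S_i}\bigl(\mathcal{L}_{\Gamma_i,\p}\times\PP(\C_{S_i\setminus\{\p\}})\bigr)$ with $\mathcal{L}_{\Gamma_i,\p}\subseteq\PP(F_\p)$, and using $(S_1\setminus\{\p\})\sqcup S_2=S\setminus\{\p\}$ for $\p\in S_1$, distributing the product over the union gives
\[
\mathcal{L}_{\Gamma_1}^{S_1}\times\PP(\C_{S_2})=\bigcup_{\p\in S_1}\bigl(\mathcal{L}_{\Gamma_1,\p}\times\PP(\C_{S\setminus\{\p\}})\bigr),
\]
and symmetrically for $\PP(\C_{S_1})\times\mathcal{L}_{\Gamma_2}^{S_2}$. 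Feeding these into the crux identity exhibits $\Gamma$ as plectic, with $\mathcal{L}_{\Gamma,\p}=\mathcal{L}_{\Gamma_1,\p}$ for $\p\in S_1$ and $\mathcal{L}_{\Gamma,\p}=\mathcal{L}_{\Gamma_2,\p}$ for $\p\in S_2$ (in particular $\mathcal{L}_{\Gamma,\p}\subseteq\PP(F_\p)$ throughout), and hence
\[
\mathcal{L}_{\Gamma,S}=\prod_{\p\in S}\mathcal{L}_{\Gamma,\p}=\Bigl(\prod_{\p\in S_1}\mathcal{L}_{\Gamma_1,\p}\Bigr)\times\Bigl(\prod_{\q\in S_2}\mathcal{L}_{\Gamma_2,\q}\Bigr)=\mathcal{L}_{\Gamma_{1,S_1}}\times\mathcal{L}_{\Gamma_{2,S_2}}.
\]
The only step with genuine content is the subsequence extraction in the $\subseteq$ inclusion above; the rest is bookkeeping with Cartesian products, so I do not anticipate a real obstacle.
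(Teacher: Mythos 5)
Your proof is correct. The paper gives no proof for this lemma, treating it as a routine consequence of the ``easy calculation'' for products over singletons stated just above the definition of plectic subgroups; your two-block version of that calculation, via the identity $\mathcal{L}_\Gamma^S=\bigl(\mathcal{L}_{\Gamma_1}^{S_1}\times\PP(\C_{S_2})\bigr)\cup\bigl(\PP(\C_{S_1})\times\mathcal{L}_{\Gamma_2}^{S_2}\bigr)$ followed by substituting the plectic descriptions of each factor, is exactly the intended argument, with the subsequence extraction correctly handling the key inclusion.
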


A more interesting collection of plectic subgroups is given by cocompact lattices in $\PGL_2(F_S)$:
\begin{proposition}\label{cocompact}
Let $\Gamma\subseteq \PGL_2(F_{S})$ be a discrete and cocompact subgroup.
Then, $\Gamma$ is plectic with set of distinguished limit points
$$\mathcal{L}_{\Gamma,S}=\PP(F_S).$$
\end{proposition}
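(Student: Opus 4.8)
The plan is to combine the inclusion furnished by Lemma~\ref{discrete} with a matching lower bound extracted from cocompactness. By Lemma~\ref{discrete} we already know
\[
\mathcal{L}_\Gamma^S\ \subseteq\ \bigcup_{\p\in S}\big(\PP(F_\p)\times\PP(\C_{S\setminus\{\p\}})\big),
\]
so it suffices to prove, for each fixed $\p\in S$, the reverse inclusion $\PP(F_\p)\times\PP(\C_{S\setminus\{\p\}})\subseteq\mathcal{L}_\Gamma^S$. Granting this for all $\p$, the definition of a plectic group applies with $\mathcal{L}_{\Gamma,\p}=\PP(F_\p)$, and the set of distinguished limit points is $\mathcal{L}_{\Gamma,S}=\prod_{\p\in S}\PP(F_\p)=\PP(F_S)$.

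Fix $\p\in S$ and a point $x=(x_\q)_{\q\in S}$ with $x_\p\in\PP(F_\p)$ and $x_\q\in\PP(\C)$ arbitrary for $\q\neq\p$. Choose $\beta_\p\in\GL_2(F_\p)$ sending the line $[1:0]$ to $x_\p$, set $a_\p:=\beta_\p([0:1])$, and let $\varpi_\p\in\mathcal{O}_\p$ be a uniformizer. Let $\delta_j\in\PGL_2(F_S)$ be the element whose $\p$-component is the class of $\beta_\p\,\mathrm{diag}(1,\varpi_\p^{\,j})\,\beta_\p^{-1}$ and whose components away from $\p$ are trivial. As $j\to\infty$, this matrix converges in $\mrm{M}_2(F_\p)$ to the rank-one matrix $M_\p:=\beta_\p\,\mathrm{diag}(1,0)\,\beta_\p^{-1}$, whose image line is $x_\p$ and whose kernel line is $a_\p$; hence $(\delta_j)_\p(z)\to x_\p$ for every $z\neq a_\p$.

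Now invoke cocompactness: choose a compact set $C\subseteq\PGL_2(F_S)$ with $\PGL_2(F_S)=\Gamma\cdot C$, and write $\delta_j=\gamma_jc_j$ with $\gamma_j\in\Gamma$ and $c_j\in C$. Since $C$ lifts to a compact subset of $\GL_2(F_S)=\prod_{\q\in S}\GL_2(F_\q)$, after passing to a subsequence we may assume $c_j\to c\in\PGL_2(F_S)$ with matrix representatives converging componentwise to $\tilde c_\q\in\GL_2(F_\q)$. Choosing the matrix lifts of $\delta_j$, $\gamma_j$, $c_j$ compatibly we then get $(\gamma_j)_\p\to M_\p\tilde c_\p^{-1}$ on the level of matrices; this is again rank one, with image line $x_\p$ and kernel line $\tilde c_\p(a_\p)$, so $(\gamma_j)_\p(z)\to x_\p$ for $z\neq\tilde c_\p(a_\p)$, while $(\gamma_j)_\q\to c_\q^{-1}$ in $\PGL_2(F_\q)$ for $\q\neq\p$. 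Taking $z=(z_\q)_{\q\in S}$ with $z_\p\neq\tilde c_\p(a_\p)$ and $z_\q:=c_\q(x_\q)$ for $\q\neq\p$ yields $\gamma_j(z)\to x$. Finally, $\{\gamma_j\}$ has infinitely many distinct members: a constant subsequence would confine the matrix representatives of $(\gamma_j)_\p$ to the scalar multiples of a single invertible matrix, which cannot converge to the nonzero singular matrix $M_\p\tilde c_\p^{-1}$. Passing to a subsequence of distinct elements exhibits $x$ as a limit point of $\Gamma$.

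The only delicate point is the $\GL_2$-versus-$\PGL_2$ bookkeeping: one must lift $\delta_j$, $\gamma_j$, $c_j$ to matrices so that $\widetilde{\delta_j}=\widetilde{\gamma_j}\,\widetilde{c_j}$ holds as an identity in $\GL_2(F_S)$ and normalize the lifts so that the relevant limits are finite and nonzero. This is precisely the several-places analogue of the classical analysis of divergent sequences in $\PGL_2(F_\p)$ through the compactification $\mathbb{P}(\mrm{M}_2(F_\p))$ carried out in \cite[Section~I]{GvdP}, and presents no real difficulty beyond careful scaling.
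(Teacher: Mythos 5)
Your argument is correct and follows the same strategy as the paper: the upper bound on $\mathcal{L}_\Gamma^S$ comes from Lemma~\ref{discrete}, and the reverse inclusion is obtained by constructing a divergent sequence in $\PGL_2(F_S)$ whose $\p$-component degenerates toward the target $F_\p$-rational point and then using cocompactness to decompose it as an element of $\Gamma$ times a convergent compact factor, extracting a subsequence of distinct $\gamma_j$ with $\gamma_j(z)\to x$. The differences from the paper's proof are cosmetic implementation choices: you use conjugated diagonal matrices $\beta_\p\,\mathrm{diag}(1,\varpi_\p^j)\,\beta_\p^{-1}$ and track the projective limit of the $\gamma_j$ in the compactification $\mathbb{P}(\mrm{M}_2(F_\p))$ (which also gives you distinctness, since lifts of a constant sequence in $\PGL_2$ cannot converge to a nonzero singular matrix), whereas the paper takes explicit triangular $g_j$, uses finiteness of $\Gamma\backslash\PGL_2(F_S)/\mathrm{Iw}_S$ to write $g_j=\gamma_j g r_j$ with $g$ fixed and $r_j\in\mathrm{Iw}_S$, and reads off distinctness from $g_j^{-1}g_k\notin\mathrm{Iw}_S$.
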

\begin{proof}
 By Lemma \ref{discrete} it is enough to show that
$$\bigcup_{\p\in S} \Big(\PP(F_\p) \times \PP(\C_{S\setminus \{\p\}})\Big)\subseteq \mathcal{L}_\Gamma^{S}.$$
 So let us fix an element $\q\in S$ and a point $x=(x_\p)_{\p\in S}\in \PP(\C_{S})$ with $x_\q\in \PP(F_\q)$.
 We consider the elements $g_j \in\mrm{GL}_2(F_\q) \subseteq \mrm{GL}_2(F_{S})$, $j\geq 1$, given by
	\[
	g_j= \begin{pmatrix}
			p^{j}&0\\
		1&1
		\end{pmatrix}\quad\mbox{if}\ x_\q=0,\qquad
	g_j= \begin{pmatrix}
		1&0\\
		x_\q^{-1}&p^{j}
	\end{pmatrix}\quad\mbox{if}\ x_\q\not=0.
	\]
	Let $y=(y_\p)_{\p\in S}\in \PP(\C_{S})$ be the point given by $y_\q=\infty$ and $y_\p=x_\p$ for $\p\neq \q$.
	Then, it is easy to see that
	\[
	\underset{j\to\infty}{\lim}\ g_j(y)=x.
	\]
	Since $\Gamma$ is cocompact, the double coset $\Gamma\backslash\PGL_2(F_{S})/\mrm{Iw}_{S}$ is finite.
	Therefore, we can suppose, up to replacing $\{g_j\}_{j}$ with a subsequence, that there is a fixed $g\in \mrm{GL}_2(F_{S})$ such that 
	\[
	g_j=\gamma_j\cdot g\cdot r_j\qquad \mbox{with}\ \gamma_j\in \Gamma,\ r_j\in \mrm{Iw}_{S}.
	\]
	Note that the sequence $\{\gamma_j\}_j$ consist of pairwise different elements because $g_{j}^{-1}\cdot g_k\not\in \mrm{Iw}_{S}$ when $j\not=k$.
	As the group $\mrm{Iw_{S}}$ is compact, so is the orbit of $y$ under $\mrm{Iw}_{S}$.
	In particular, there exists a subsequence of $\{r_j\}_{j\geq 1}$ such that $(g\cdot r_j)(y)$ converges to some $y_\circ \in \PP(\C_{S})$.
We deduce the existence of a subsequence $\{\gamma_\ell\}_\ell\subseteq\{\gamma_j\}_j$ with
	\[
	x=\underset{\ell\to\infty}{\lim}\ \gamma_{\ell}(y_\circ),
	\] 
	that is, $x$ is a limit point for $\Gamma$.
\end{proof}
Note that for arithmetic application one can produce plectic groups that are not products by considering $S$-arithmetic subgroups of totally definite quaternion algebras (see Section \ref{Hilbert}). The following lemma will be used to construct a Hecke action on plectic Jacobians.
\begin{lemma}\label{action}
Let $\Gamma\subseteq \PGL_2(F_S)$ be a plectic subgroup and $g$ an element of $\PGL_2(F_S)$.
Then $g\Gamma g^{-1}$ is plectic with set of distinguished limit points
$$\mathcal{L}_{g\Gamma g^{-1},S}=g( \mathcal{L}_{\Gamma,S}).$$
\end{lemma}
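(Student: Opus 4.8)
The plan is to reduce everything to the elementary fact that $\PGL_2(F_S)$ acts on $\PP(\C_S)=\prod_{\p\in S}\PP(\C)$ componentwise, so that an element $g=(g_\p)_{\p\in S}$ acts as a product of homeomorphisms $g_\p\colon\PP(\C)\to\PP(\C)$, each of which restricts to a bijection of $\PP(F_\p)$ onto itself. In particular $g$ is a homeomorphism of $\PP(\C_S)$, and conjugation by $g$ is a homeomorphism of $\PGL_2(F_S)$; hence $g\Gamma g^{-1}$ is again discrete, so that it makes sense to ask whether it is plectic.

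First I would compute the full limit set of the conjugate. If $x\in\mathcal{L}_{g\Gamma g^{-1}}^S$, pick $y\in\PP(\C_S)$ and pairwise distinct elements $g\gamma_jg^{-1}\in g\Gamma g^{-1}$ with $(g\gamma_jg^{-1})(y)\to x$; applying the continuous map $g^{-1}$ yields $\gamma_j(g^{-1}y)\to g^{-1}x$ with the $\gamma_j$ still pairwise distinct, so $g^{-1}x\in\mathcal{L}_\Gamma^S$. The reverse inclusion is symmetric, hence
\[
\mathcal{L}_{g\Gamma g^{-1}}^S=g\big(\mathcal{L}_\Gamma^S\big).
\]

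Next I would feed in the plectic hypothesis. Writing $\mathcal{L}_\Gamma^S=\bigcup_{\p\in S}\big(\mathcal{L}_{\Gamma,\p}\times\PP(\C_{S\setminus\{\p\}})\big)$ with $\mathcal{L}_{\Gamma,\p}\subseteq\PP(F_\p)$, and using that $g$ acts componentwise, one gets
\[
g\big(\mathcal{L}_\Gamma^S\big)=\bigcup_{\p\in S}\Big(g_\p(\mathcal{L}_{\Gamma,\p})\times\prod_{\q\in S\setminus\{\p\}}g_\q\big(\PP(\C)\big)\Big)=\bigcup_{\p\in S}\big(g_\p(\mathcal{L}_{\Gamma,\p})\times\PP(\C_{S\setminus\{\p\}})\big),
\]
where the second equality uses that each $g_\q$ is a bijection of $\PP(\C)$. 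Since $g_\p(\mathcal{L}_{\Gamma,\p})\subseteq\PP(F_\p)$, combining this with the previous display exhibits $g\Gamma g^{-1}$ as plectic with $\mathcal{L}_{g\Gamma g^{-1},\p}=g_\p(\mathcal{L}_{\Gamma,\p})$, whence
\[
\mathcal{L}_{g\Gamma g^{-1},S}=\prod_{\p\in S}g_\p(\mathcal{L}_{\Gamma,\p})=g\Big(\prod_{\p\in S}\mathcal{L}_{\Gamma,\p}\Big)=g(\mathcal{L}_{\Gamma,S}).
\]

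There is essentially no hard step: the only points requiring a moment's care are that $g_\p$ carries $F_\p$-rational points to $F_\p$-rational points (since $g_\p\in\PGL_2(F_\p)$) while the remaining coordinates are merely permuted bijectively within $\PP(\C)$, and that conjugation preserves discreteness so that the notion ``plectic'' applies to $g\Gamma g^{-1}$ in the first place. If one wishes to be scrupulous about $\mathcal{L}_{\Gamma,S}$ being well defined (independent of the chosen decomposition), one observes that $\mathcal{L}_{\Gamma,\p}$ is recovered by slicing $\mathcal{L}_\Gamma^S$ along $\{(\,\cdot\,,y)\}$ for any $y\in\prod_{\q\neq\p}\big(\PP(\C)\setminus\PP(F_\q)\big)$; but this is not strictly needed for the statement.
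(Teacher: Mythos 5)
Your proof is correct, and it fills in exactly the argument the paper leaves implicit (the lemma is stated without proof): conjugation is a homeomorphism preserving discreteness, the full limit set transforms as $\mathcal{L}_{g\Gamma g^{-1}}^S=g(\mathcal{L}_\Gamma^S)$ by continuity of $g^{\pm1}$ and injectivity of $\gamma\mapsto g\gamma g^{-1}$, and the componentwise action of $g=(g_\p)_\p$ sends the decomposition $\bigcup_\p\big(\mathcal{L}_{\Gamma,\p}\times\PP(\C_{S\setminus\{\p\}})\big)$ to $\bigcup_\p\big(g_\p(\mathcal{L}_{\Gamma,\p})\times\PP(\C_{S\setminus\{\p\}})\big)$ with $g_\p(\mathcal{L}_{\Gamma,\p})\subseteq\PP(F_\p)$. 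The parenthetical remark about recovering $\mathcal{L}_{\Gamma,\p}$ by slicing along points with non-rational coordinates in the other factors is a nice touch that confirms well-definedness of $\mathcal{L}_{\Gamma,S}$, though, as you say, it is presupposed by the statement rather than something the proof must re-establish.
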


\subsection{Partial intersection with compact open subgroups}
In case $S$ is a singleton $\{\p\}$, the set of limits points $\mathcal{L}_{\Gamma,\p}\subseteq \PP(F_\p)$ of a discrete subgroup $\Gamma\subseteq \PGL_2(F_\p)$ is either a finite set with at most $2$ elements or a perfect set, that is, a closed subset without isolated points (see (1.6.3) of \cite[Chapter I]{GvdP}).
We now prove the analogous statement for plectic subgroups for arbitrary $S$ by reducing to the case of a singleton set. 

\smallskip
\noindent 
Suppose $S_1\subseteq S$. Given subgroups $\Gamma\subseteq \PGL_2(F_{S})$ and $U\subseteq \PGL_2(F_{S\setminus S_1})$, we put
\begin{align*}
\widetilde{\Gamma}_U&:=\Gamma \cap \left(\PGL_2(F_{S_1})\times U\right)\\
\intertext{and}
\Gamma_U&:=\pr_{S_1}(\widetilde{\Gamma}_U)\subseteq \PGL_2(F_{S_1}).
\end{align*}

\begin{lemma}\label{finitekernel}
Let $\Gamma\subseteq \PGL_2(F_{S})$ be a discrete subgroup and $U\subseteq \PGL_2(F_{S\setminus S_1})$ an open compact subgroup.
Then $\Gamma_U\subseteq \PGL_2(F_{S_1})$ is a discrete subgroup, and the kernel of the projection map $\pr_{S_1}\colon \widetilde{\Gamma}_U \to \Gamma_U$ is finite. 
\end{lemma}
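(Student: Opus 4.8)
The plan is to exploit that $U$ is compact and open, so that it is ``invisible'' both to discreteness and to finiteness-of-index type arguments. First I would show that $\Gamma_U$ is discrete. Suppose it were not; then by the remark preceding Lemma \ref{discrete}, the set of limit points $\mathcal{L}^{S_1}_{\Gamma_U}$ would be all of $\PP(\C_{S_1})$. Pick any point $y \in \PP(\C_{S_1})$ and a point $z \in \PP(\C_{S \setminus S_1})$; using the openness of $U$ one produces, from a sequence of distinct $(\gamma_j)_{j}$ in $\widetilde{\Gamma}_U$ whose $S_1$-projections move $y$ toward a prescribed limit, a corresponding statement about $\Gamma$ acting on $\PP(\C_S)$. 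More directly: since $\pr_{S_1}$ restricted to $\widetilde{\Gamma}_U$ has image $\Gamma_U$, non-discreteness of $\Gamma_U$ means there are distinct $\gamma_j \in \widetilde{\Gamma}_U$ with $\pr_{S_1}(\gamma_j) \to \mathrm{id}$ in $\PGL_2(F_{S_1})$; but the $(S\setminus S_1)$-components of the $\gamma_j$ all lie in the compact set $U$, so after passing to a subsequence they converge, and hence $\gamma_j$ itself converges in $\PGL_2(F_S)$ to an element whose $S_1$-component is the identity. This contradicts the discreteness of $\Gamma$, since a convergent sequence of distinct elements in a discrete group is impossible. So $\Gamma_U$ is discrete.

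For the finiteness of $K := \ker(\pr_{S_1}\colon \widetilde{\Gamma}_U \to \Gamma_U)$, note that $K = \Gamma \cap (\{1\} \times U)$, viewing $\{1\}$ as the trivial subgroup of $\PGL_2(F_{S_1})$. This is the intersection of the discrete group $\Gamma$ with the compact subgroup $\{1\} \times U$ of $\PGL_2(F_S)$; a discrete subset of a compact space is finite, so $K$ is finite. This step is essentially immediate once the two players — discreteness of $\Gamma$ and compactness of $U$ — are identified.

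The main (very minor) obstacle is the discreteness claim: one must be a little careful that ``$\pr_{S_1}(\gamma_j)$ does not go to infinity'' — i.e.\ that a failure of discreteness of $\Gamma_U$ really is witnessed by a sequence accumulating at a point of $\PGL_2(F_{S_1})$ rather than escaping to infinity — but since $\PGL_2(F_{S_1})$ is locally compact and Hausdorff, non-discreteness of the subgroup $\Gamma_U$ precisely means some accumulation point in $\PGL_2(F_{S_1})$ exists, and translating by a fixed group element we may assume it is the identity. Combined with compactness of $U$ this furnishes the needed convergent sequence of distinct elements of $\Gamma$, contradicting discreteness. No deeper input (not even Lemma \ref{discrete} itself) is really needed; the argument is purely topological.
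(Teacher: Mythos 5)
Your argument is correct and essentially matches the paper's: both prove discreteness of $\Gamma_U$ and finiteness of the kernel by playing off the compactness of $U$ against the discreteness of $\Gamma$, and the kernel step is literally the paper's identification $K=\Gamma\cap(\{1\}\times U)$. For discreteness you argue sequentially (pass to a convergent subsequence in $U$, obtain a convergent sequence of distinct elements of $\Gamma$), whereas the paper argues directly: for any compact open $V\subseteq\PGL_2(F_{S_1})$ the set $\Gamma\cap(V\times U)$ is finite, and $\Gamma_U\cap V$ is its image under $\pr_{S_1}$. These are interchangeable here since $\PGL_2(F_S)$ is metrizable.

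One small imprecision in the kernel step: ``a discrete subset of a compact space is finite'' is not literally true (consider $\{1/n:n\geq 1\}\subseteq[0,1]$). What you actually need is that a discrete subgroup of a Hausdorff topological group is closed; hence $K=\Gamma\cap(\{1\}\times U)$ is closed in the compact set $\{1\}\times U$, therefore compact, and a compact discrete space is finite. The paper's phrasing ``discrete as well as compact'' records exactly this. It is a one-line fix that does not affect the substance of your proof, but as stated the auxiliary fact is false.
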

\begin{proof}
Let $V\subseteq \PGL_2(F_{S_1})$ be any compact open subgroup.
Since $\Gamma$ is discrete, the intersection $\Gamma\cap(V\times U)$ is finite.
This implies that the intersection $\Gamma_U\cap V$ is finite, which proves the first claim.
The kernel of the projection from $\widetilde{\Gamma}_U$ to $\Gamma_U$ is given by the intersection
\[
\Gamma \cap \big(\{1_{\PGL_2(F_{S_1})}\}\times U\big),
\]
which is discrete as well as compact. Hence, it is finite.
\end{proof}

\begin{proposition}\label{induction}
Let $\Gamma\subseteq \PGL_2(F_{S})$ be a plectic subgroup.
For every open compact subgroup $U\subseteq \PGL_2(F_{S\setminus S_1})$ the subgroup $\Gamma_U\subseteq \PGL_2(F_{S_1})$ is plectic with
\[
\mathcal{L}_{\Gamma,\p}=\mathcal{L}_{\Gamma_U,\p}
\]
for all $\p\in S_1$.
\end{proposition}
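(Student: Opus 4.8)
The plan is to compute the limit set of $\Gamma_U$ directly and match it with the $\p$-components of the limit set of $\Gamma$. First I would establish the inclusion $\mathcal{L}_{\Gamma_U}^{S_1}\subseteq \mathcal{L}_\Gamma^{S_1}$ in the sense of $\p$-components: given a limit point $x\in\mathcal{L}_{\Gamma_U}^{S_1}$ realized by distinct $\{\gamma_j\}\subseteq\Gamma_U$ with $\gamma_j(y)\to x$, lift each $\gamma_j$ to $\widetilde{\gamma_j}\in\widetilde{\Gamma}_U\subseteq\Gamma$ (possible by definition of $\Gamma_U=\pr_{S_1}(\widetilde{\Gamma}_U)$); by Lemma~\ref{finitekernel} the kernel of $\pr_{S_1}$ on $\widetilde{\Gamma}_U$ is finite, so after passing to a subsequence the $\widetilde{\gamma_j}$ are pairwise distinct. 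Since the $U$-component lives in a compact group, a further subsequence makes $(\widetilde{\gamma_j})$ converge on $S\setminus S_1$ when applied to any fixed point; choosing $\widetilde{y}\in\PP(\C_S)$ with $\pr_{S_1}(\widetilde{y})=y$, we get $\widetilde{\gamma_j}(\widetilde{y})\to \widetilde{x}$ for some $\widetilde{x}$ whose $S_1$-part is $x$. Hence $\widetilde{x}\in\mathcal{L}_\Gamma^S$, and because $\Gamma$ is plectic, the $\p$-component $x_\p$ (for any $\p\in S_1$ with $x_\p$ non-generic) lies in $\mathcal{L}_{\Gamma,\p}$. Feeding this through the definition of plectic shows $\mathcal{L}_{\Gamma_U,\p}\subseteq\mathcal{L}_{\Gamma,\p}$.

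For the reverse inclusion $\mathcal{L}_{\Gamma,\p}\subseteq\mathcal{L}_{\Gamma_U,\p}$, fix $\p\in S_1$ and a point $z\in\mathcal{L}_{\Gamma,\p}$. By the plectic hypothesis there is a sequence of distinct $\{\gamma_j\}\subseteq\Gamma$ and a point $y\in\PP(\C_S)$ with $\gamma_j(y)\to w$ where $w_\p=z$. The subtlety is that the elements $\gamma_j$ need not lie in $\PGL_2(F_{S_1})\times U$. To fix this I would use a compactness/pigeonhole argument on the $(S\setminus S_1)$-components: since $\PGL_2(F_{S\setminus S_1})$ is covered by finitely many cosets of the compact open $U$ only locally, one instead uses that $\pr_{S\setminus S_1}(\Gamma)$ has discrete image intersected with... — more carefully, one replaces $\Gamma$ by a conjugate. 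Concretely, for each $j$ write $(\gamma_j)_{S\setminus S_1}=u_j h_j$ is not available in general; instead, I would argue as follows: the orbit closure of a point under $\Gamma$ in the $(S\setminus S_1)$-direction, together with discreteness of $\Gamma$, forces that after translating $y$ by a suitable fixed element and passing to a subsequence, the $(S\setminus S_1)$-components $(\gamma_j)_{S\setminus S_1}$ all lie in a single coset $g_0 U$; then $g_0^{-1}\gamma_j\in\widetilde{\Gamma}_{U}$-up-to-conjugation. Replacing $\Gamma$ by $(1\times g_0^{-1})\Gamma(1\times g_0)$ — which is again plectic by Lemma~\ref{action} and does not change the $S_1$-components of limit points — we may assume $(\gamma_j)_{S\setminus S_1}\in U$, so $\gamma_j\in\widetilde{\Gamma}_U$ and $\pr_{S_1}(\gamma_j)\in\Gamma_U$. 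These projections are eventually distinct (again by the finite-kernel Lemma~\ref{finitekernel}), and $\pr_{S_1}(\gamma_j)(\pr_{S_1}(y))\to w_{S_1}$ with $\p$-component $z$, so $z\in\mathcal{L}_{\Gamma_U,\p}$.

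Finally, combining the two inclusions gives $\mathcal{L}_{\Gamma_U,\p}=\mathcal{L}_{\Gamma,\p}$ for every $\p\in S_1$, and in particular $\mathcal{L}_{\Gamma_U}^{S_1}=\bigcup_{\p\in S_1}\big(\mathcal{L}_{\Gamma,\p}\times\PP(\C_{S_1\setminus\{\p\}})\big)$, which is exactly the statement that $\Gamma_U$ is plectic with the claimed limit components; discreteness of $\Gamma_U$ is already supplied by Lemma~\ref{finitekernel}.

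\textbf{Main obstacle.} The hard part is the reverse inclusion: arranging that the sequence $\{\gamma_j\}$ witnessing a limit point of $\Gamma$ can be taken inside $\PGL_2(F_{S_1})\times U$ (after harmless modification). The clean way is to first note that it suffices to find \emph{some} point whose orbit accumulates at the desired limit point with elements eventually confined to a fixed $U$-coset in the $(S\setminus S_1)$-direction; here discreteness of $\Gamma$ together with compactness of $U$ (so that $\PGL_2(F_{S\setminus S_1})/U$ is discrete, and $\Gamma$'s image there, intersected appropriately, behaves well) should let one pass to such a subsequence, after possibly conjugating by an element of $\{1\}\times\PGL_2(F_{S\setminus S_1})$ as permitted by Lemma~\ref{action}. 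I would be careful to check that this conjugation leaves $\mathcal{L}_{\Gamma,\p}$ for $\p\in S_1$ unchanged, which follows since the conjugating element acts trivially on the $S_1$-factors.
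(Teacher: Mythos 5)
Your forward inclusion is essentially the paper's argument (lift to $\widetilde{\Gamma}_U$, thin using Lemma~\ref{finitekernel}, use compactness of the $U$-orbit to pass to a convergent subsequence, invoke plecticity of $\Gamma$). Two small points there: you should note explicitly that the lift $\widetilde y$ can and must be chosen with $(S\setminus S_1)$-components in $\Omega_{S\setminus S_1}$, so that the resulting limit in those coordinates stays outside $\PP(F_\q)$ and hence outside $\mathcal{L}_{\Gamma,\q}$ --- this is how one forces the ``singular'' coordinate into $S_1$; and strictly speaking you should not write $\mathcal{L}_{\Gamma_U,\p}$ until after you have proved the set $\mathcal{L}_{\Gamma_U}^{S_1}$ has the product form, so it is cleaner to phrase both inclusions directly in terms of $\mathcal{L}_{\Gamma_U}^{S_1}\subseteq\bigcup_{\p\in S_1}\bigl(\mathcal{L}_{\Gamma,\p}\times\PP(\C_{S_1\setminus\{\p\}})\bigr)$ and the reverse containment, then read off plecticity and the identification of the components.

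The reverse inclusion, which you correctly flag as the main obstacle, has a genuine gap. You assert that ``the orbit closure of a point under $\Gamma$ in the $(S\setminus S_1)$-direction, together with discreteness of $\Gamma$'' forces the $(S\setminus S_1)$-components $(\gamma_j)_{S\setminus S_1}$ into a single coset $g_0U$ after passing to a subsequence. That step has no justification as written and is in fact false for an arbitrary witnessing sequence: if the $(S\setminus S_1)$-component of the limit point $w$ lies in $\PP(F_\q)$ (the boundary of the Drinfeld space), the sequence $(\gamma_j)_{S\setminus S_1}$ can escape to infinity modulo $U$, so no pigeonhole or compactness of $U$ will make it cluster. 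The crucial missing ingredient is to use plecticity \emph{again} to pick the $(S\setminus S_1)$-coordinates of the target limit point favourably: since $z\in\mathcal{L}_{\Gamma,\p}$ with $\p\in S_1$, the point $(z,\ast)\in\mathcal{L}_\Gamma^S$ for \emph{every} choice of the remaining coordinates, so you may take $w_{S\setminus S_1}\in\Omega_{S\setminus S_1}$ and, after the reduction to $U=K_\q$ and $|S\setminus S_1|=1$ (do this first, via Lemma~\ref{finiteindex} and induction, as the paper does), take $w_\q\in\red^{-1}(v)$ where $v$ is the vertex of the Bruhat--Tits tree fixed by $K_\q$. Because $w_\q\notin\PP(F_\q)$, the witnessing $y_\q$ is automatically in $\Omega_\q$, and since $\red^{-1}(v)$ is open and $\red$ is $\PGL_2(F_\q)$-equivariant, eventually $\gamma_j(y_\q)\in\red^{-1}(v)$; then $\gamma_j\gamma_0^{-1}$ has $\q$-component in $K_\q=U$, i.e.\ lies in $\widetilde{\Gamma}_U$. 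That is what makes the sequence land in $\widetilde{\Gamma}_U$ --- not generic discreteness. Your conjugation idea is then an unnecessary detour (and, if kept, needs the extra remark that $\Gamma_{g_0Ug_0^{-1}}$ and $\Gamma_U$ are commensurable so that Lemma~\ref{finiteindex} applies); the preliminary reduction to $U=K_\q$ renders it moot.
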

\begin{proof}
Let $U_\q\subseteq \PGL_2(F_\p)$, $\q\in S\setminus S_1$, be open compact subgroups with
\[
U^\prime:=\prod_{\q\in S\setminus S_1}U_\q\ \subseteq\ U.
\]
By Lemma \ref{finiteindex} one may replace $U$ by $U^\prime$.
Thus, by an easy inductive argument we reduce to the case that $S\setminus S_1=\{\q\}$ is a singleton set.
Moreover, by invoking Lemma \ref{finiteindex} again we may assume that $U=K_\p$.

\smallskip
\noindent Now, let $x$ be an element of $\mathcal{L}_{\Gamma_{U}}^{S_1}$.
Choose $y\in\PP(\C_{S_1})$ and distinct elements $\{\gamma_j\}_j\subseteq \Gamma_U$ such that $\gamma_j(y)$ converges to $y$.
Moreover, pick preimages $\widetilde{\gamma}_j$ of $\widetilde{\gamma}_j$ in $\widetilde{\Gamma}_U$ and an element $y_\q \in \PP(\C_\q)\setminus \PP(F_\q)$.
The sequence $\widetilde{\gamma}_j(y_\q)$ lies inside the compact set $U.x_\q\subseteq \PP(\C_\q)\setminus \PP(F_\q)$.
Since $\PP(\C_\q)\setminus \PP(F_\q)$ is a metric space, it follows that $U.x_\q$ is sequentially compact.
In particular, we may replace $\widetilde{\gamma}_j$ by a subsequence such that $\widetilde{\gamma}_j(y_\q)$ converges to an element $x_\q \in \PP(\C_\q)\setminus \PP(F_\q)$.
It follows that $(x,x_\q)$ is an element of $\mathcal{L}_{\Gamma}^{S}$.
Since $\Gamma$ is plectic and $x_\q$ is not an element of $\mathcal{L}_{\Gamma,\p}$, it follows that $(x,x_\q)\in \mathcal{L}_{\Gamma}^{S}$ for all $x_\q\in \PP(\C_\q)$.

\smallskip
\noindent On the other hand, let $x\in \PP(\C_{S_1})$ be an element such that there exists $x_\q\in \PP(\C_\q) \setminus \mathcal{L}_{\Gamma,\p}$ with $(x,x_\q)\in \mathcal{L}_{\Gamma}^{S}$. Then $(x,x_\q)$ lies in  $\mathcal{L}_{\Gamma}^{S}$ for all $x_\q\in \PP(\C_\q)$.
Consider the reduction map
\[
\red\colon \PP(\C_\q)\setminus \PP(F_\q) \too \mathcal{T}_\q
\]
to the Bruhat--Tits tree of $\PGL_2(F_\q)$ (see for example \cite[Section 4.9]{FvdP}).
The subgroup $U=K_\p\subseteq \PGL_2(F_\q)$ is the stabilizer of a vertex $v \in \mathcal{T}_\q$. 
Fix elements $x_\q \in \red^{-1}(v)$, $y=(y_\p)\in\PP(\C_{S_1})$, $y_\q\in \PP(\C_{S_1})$, and a sequence of distinct elements $\{\gamma_j\}_j$ satisfying 
\[
\lim_{j\to \infty} \gamma_j(y,y_\q)=(x,x_\q).
\]
Since $\red^{-1}(v)$ is an open neighbourhood of $x_\q$, we may assume that $\gamma_j(y_\q)\in\red^{-1}(v)$ for all $j\geq 0$.
This implies that $\gamma^\prime_j:=\gamma_j \gamma_0^{-1}\in \Gamma_U$ for all $j$.
By Lemma \ref{finitekernel} we may choose a subsequence of the sequence $\pr_{S_1}(\gamma^\prime_j)$ that consists of distinct elements.
It follows that $x$ is an element of $\mathcal{L}_{\Gamma_U}^{S_1}$.
\end{proof}

The following characterization of limit sets of plectic subgroups can be deduced by combining Proposition \ref{induction} with $S_1=\{\p\}$ and equations (1.5) and (1.6.3) of \cite[Chapter I]{GvdP}.
\begin{corollary}\label{limitsetcases}
Let $\Gamma\subseteq \PGL_2(F_{S})$ be a plectic subgroup.
For every $\p\in S$ one of the following is true:
\begin{enumerate}[(i)]
\item\label{case0} $\mathcal{L}_{\Gamma,\p}=\emptyset$.
\item\label{caseno} $|\mathcal{L}_{\Gamma,\p}|=1$ and $\mathrm{char}(F)=p$.
\item\label{case1} $|\mathcal{L}_{\Gamma,\p}|=2$.
\item\label{case2} $\mathcal{L}_{\Gamma,\p}$ is a perfect set.
\end{enumerate} 
\end{corollary}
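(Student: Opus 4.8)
The plan is to reduce the general statement to the one-dimensional case, where it is known, by combining Proposition~\ref{induction} with the classical dichotomy for discrete subgroups of $\PGL_2(F_\p)$.

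\textbf{Step 1: Reduction to a singleton.}
Fix $\p\in S$ and apply Proposition~\ref{induction} with $S_1=\{\p\}$ and with $U=K_{S\setminus\{\p\}}$ (an open compact subgroup of $\PGL_2(F_{S\setminus\{\p\}})$). This yields a plectic subgroup $\Gamma_U\subseteq\PGL_2(F_\p)$ with $\mathcal{L}_{\Gamma,\p}=\mathcal{L}_{\Gamma_U,\p}$. Since $S_1$ is a singleton, the distinguished limit set $\mathcal{L}_{\Gamma_U,\{\p\}}$ is just the full limit set $\mathcal{L}_{\Gamma_U}^{\{\p\}}$, which by Lemma~\ref{discrete} (or by (1.6.4) of \cite[Chapter~I]{GvdP}) is contained in $\PP(F_\p)$. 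So the problem is reduced to understanding the limit set of an arbitrary discrete subgroup of $\PGL_2(F_\p)$.

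\textbf{Step 2: Invoke the classical dichotomy.}
For a discrete subgroup $\Delta\subseteq\PGL_2(F_\p)$, equations (1.5) and (1.6.3) of \cite[Chapter~I]{GvdP} state that its limit set $\mathcal{L}_\Delta$ is either finite with at most two elements, or a perfect set. This directly gives cases \eqref{case0}, \eqref{case1}, \eqref{case2}, together with the subcase $|\mathcal{L}_\Delta|=1$. Applying this to $\Delta=\Gamma_U$ and using $\mathcal{L}_{\Gamma,\p}=\mathcal{L}_{\Gamma_U}^{\{\p\}}$ transports the dichotomy back to $\Gamma$.

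\textbf{Step 3: The characteristic constraint in the one-point case.}
It remains to argue that $|\mathcal{L}_{\Gamma,\p}|=1$ forces $\mathrm{char}(F)=p$. The point is that in mixed characteristic a discrete subgroup of $\PGL_2(F_\p)$ with a single limit point would be (virtually) generated by a single unipotent element fixing that point; but a cyclic group generated by a nontrivial unipotent element of $\PGL_2(F_\p)$ is discrete only when $F_\p$ has positive characteristic — in characteristic zero the powers $\begin{pmatrix}1&n\\0&1\end{pmatrix}$ with $n\in\Z$ accumulate at the identity after scaling, so such a group is not discrete. Hence in characteristic $0$ the cases with $|\mathcal{L}_{\Gamma,\p}|=1$ cannot occur, while in characteristic $p$ they can (e.g.\ $\Z_p$-lattices of unipotents), which is why case \eqref{caseno} carries the hypothesis $\mathrm{char}(F)=p$. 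This is also precisely the content already encoded in (1.6.3) of \cite{GvdP}, so one may simply cite it.

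The only mildly delicate point is Step~1: one must make sure Proposition~\ref{induction} applies with the chosen $U$, i.e.\ that $K_{S\setminus\{\p\}}$ is open and compact — which it is, being a product of the compact open subgroups $K_\q=\PGL_2(\mathcal{O}_\q)$. Everything else is a direct citation. I expect no real obstacle: the proposition is essentially a bookkeeping corollary of the induction result and the one-variable theory in \cite{GvdP}.
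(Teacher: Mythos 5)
Your proposal is correct and takes essentially the same route as the paper, which proves this corollary in a single sentence by combining Proposition~\ref{induction} with $S_1=\{\p\}$ and equations (1.5) and (1.6.3) of \cite[Chapter I]{GvdP}. Your Step~3 elaboration of the characteristic constraint is redundant since, as you yourself note, that dichotomy is already recorded in (1.6.3).
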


\noindent Case \eqref{caseno} only happens in the presence of a large subgroup consisting of unipotent elements, while case \eqref{case0} and \eqref{caseno} imply the absence of hyperbolic elements  (see Examples (1.7) of \cite[Chapter I]{GvdP}). This motivates the following definition:

\begin{definition}
A plectic subgroup $\Gamma\subseteq \PGL_2(F_{S})$ is called \emph{semi-simple} if for every $\p\in S$ the set $\mathcal{L}_{\Gamma,\p}$ does not have cardinality $1$.
Given a semi-simple plectic subgroup $\Gamma$ we put
\[
\mbf{S}_{\Gamma}:=\left\{\p\in S\ \vert\ \mathcal{L}_{\Gamma,\p}\neq \emptyset \right\}.
\]
\end{definition}

Clearly, every subgroup commensurable to a semi-simple plectic subgroup is semi-simple.
Moreover, when $S_1, S_2\subseteq S$ are disjoint subsets and $\Gamma_i\subseteq \PGL_2(F_{S_i})$ are semi-simple plectic subgroups, then also the product $\Gamma_1 \times \Gamma_2$ is plectic semi-simple.
Proposition \ref{induction} immediately implies the following:
\begin{corollary}
Let $\Gamma\subseteq \PGL_2(F_{S})$ be a semi-simple plectic subgroup and $U\subseteq \PGL_2(F_{S\setminus S_1})$ an open compact subgroup.
Then $\Gamma_U$ is a semi-simple plectic subgroup.
\end{corollary}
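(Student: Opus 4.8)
The plan is to deduce this directly from Proposition \ref{induction}, so there is essentially no obstacle to overcome; the content of the statement is already packaged in that proposition. First I would recall that since $\Gamma$ is plectic, Proposition \ref{induction} applies verbatim and tells us two things at once: the subgroup $\Gamma_U\subseteq \PGL_2(F_{S_1})$ is plectic, and moreover
\[
\mathcal{L}_{\Gamma_U,\p}=\mathcal{L}_{\Gamma,\p}\qquad\text{for all }\p\in S_1.
\]
So the limit data of $\Gamma_U$ is literally a restriction of the limit data of $\Gamma$ to the indices in $S_1$.

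Then I would invoke the definition of semi-simplicity. By hypothesis $\Gamma$ is semi-simple, which means $\mathcal{L}_{\Gamma,\p}$ does not have cardinality $1$ for any $\p\in S$; in particular this holds for every $\p\in S_1\subseteq S$. Combining with the identification $\mathcal{L}_{\Gamma_U,\p}=\mathcal{L}_{\Gamma,\p}$ from the previous step, we conclude that $\mathcal{L}_{\Gamma_U,\p}$ does not have cardinality $1$ for any $\p\in S_1$. By definition this says precisely that the plectic subgroup $\Gamma_U$ is semi-simple, which completes the argument. (As a side remark, one also sees immediately that $\mbf{S}_{\Gamma_U}=\mbf{S}_\Gamma\cap S_1$, though this is not needed for the statement.)
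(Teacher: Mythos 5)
Your argument is exactly the one the paper intends: the corollary is stated as an immediate consequence of Proposition \ref{induction}, and you spell out precisely why — the identification $\mathcal{L}_{\Gamma_U,\p}=\mathcal{L}_{\Gamma,\p}$ for $\p\in S_1$ transfers the non-cardinality-one condition from $\Gamma$ to $\Gamma_U$. Correct, and same approach.
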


\begin{lemma}
Let $\Gamma\subseteq \PGL_2(F_S)$ be a torsion-free plectic subgroup.
Then $\Gamma$ is semi-simple.
\end{lemma}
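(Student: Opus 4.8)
The plan is to argue by contradiction, exhibiting a nontrivial torsion element of $\Gamma$ whenever semi-simplicity fails. So suppose $|\mathcal{L}_{\Gamma,\p}|=1$ for some $\p\in S$; by Corollary \ref{limitsetcases} we are then in case \eqref{caseno}, and in particular $\mathrm{char}(F)=p$. (In characteristic $0$ there is nothing to prove, as case \eqref{caseno} does not occur.)

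The first step is to descend to a genuinely discrete subgroup of $\PGL_2(F_\p)$. Applying Proposition \ref{induction} with $S_1=\{\p\}$ and the open compact subgroup $U=K_{S\setminus\{\p\}}\subseteq\PGL_2(F_{S\setminus\{\p\}})$ yields a plectic subgroup $\Gamma_U\subseteq\PGL_2(F_\p)$ with $\mathcal{L}_{\Gamma_U,\p}=\mathcal{L}_{\Gamma,\p}$; thus the limit set $\mathcal{L}_{\Gamma_U}^{\{\p\}}$ is a single point of $\PP(F_\p)$. By Lemma \ref{finitekernel} the projection $\widetilde{\Gamma}_U\to\Gamma_U$ has finite kernel, and since $\widetilde{\Gamma}_U$ is a subgroup of the torsion-free group $\Gamma$ this kernel is trivial; hence $\Gamma_U\cong\widetilde{\Gamma}_U$ is a torsion-free discrete subgroup of $\PGL_2(F_\p)$, and it is infinite because its limit set is non-empty (Proposition \ref{nolimits}).

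The second step is the structural input. By the analysis of one-point limit sets in (1.6.3) and the Examples (1.7) of \cite[Chapter I]{GvdP}, a discrete subgroup of $\PGL_2(F_\p)$ whose limit set is a single point contains a nontrivial unipotent element --- indeed this is exactly the phenomenon forcing $\mathrm{char}(F)=p$ in Corollary \ref{limitsetcases}. Alternatively one argues directly: the limit point is fixed by $\Gamma_U$, so after conjugation $\Gamma_U$ lies in the upper-triangular subgroup, each $\gamma$ acting as $z\mapsto a_\gamma z+b_\gamma$; an element with $|a_\gamma|\neq 1$ has a second fixed point which would lie in the limit set, an element with $a_\gamma$ a nontrivial root of unity is torsion, and an element with $a_\gamma$ a unit of infinite order generates a non-discrete subgroup, so every nontrivial element of the infinite group $\Gamma_U$ is a unipotent translation. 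Either way, $\Gamma_U$ contains a nontrivial unipotent $\gamma\in\Gamma_U\subseteq\Gamma$; since $\mathrm{char}(F_\p)=p$ we have $\gamma^p=1$, contradicting the torsion-freeness of $\Gamma$.

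The only genuine content is the classical structural fact about one-point limit sets \cite[Chapter I]{GvdP}; the reduction from $\Gamma$ to $\Gamma_U$ is routine given Proposition \ref{induction} and Lemma \ref{finitekernel}, and the observation that unipotent elements are $p$-torsion in residue characteristic $p$ is immediate. The one point requiring care is that this reduction preserves torsion-freeness, which is precisely why the finiteness of the kernel in Lemma \ref{finitekernel} is needed.
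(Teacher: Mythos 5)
Your proposal is correct and follows essentially the same route as the paper's proof, which simply cites \cite[Ch.~I, (1.7)]{GvdP} (the structural fact that a one-point limit set forces a nontrivial unipotent element) together with the observation that unipotent elements in residue characteristic $p$ are $p$-torsion. You usefully make explicit the reduction to the single-factor case via Proposition \ref{induction} and Lemma \ref{finitekernel} --- including the point that torsion-freeness of $\Gamma$ kills the finite kernel so that $\Gamma_U$ is itself torsion-free --- which the paper leaves implicit.
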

\begin{proof}
This is a consequence of \cite[Ch. I, (1.7)]{GvdP} and the fact that unipotent elements in characteristic $p$ are torsion.
\end{proof}

\subsection{Associated polysimplicial complex}
Recall that an infinite halfline of a tree is called an end of that tree.
The set of ends of the Bruhat--Tits tree $\mathcal{T}_\p$, $\p\in S$, can canonically be identified with $\PP(F_\p)$ (\cite{DasTei}, Lemma 1.3.6).
Let $\Gamma\subseteq \PGL_2(F_{S})$ be a semi-simple plectic subgroup.
For every $\p \in S$ one associates a locally finite tree $\mathcal{T}_{\mathcal{L}_{\Gamma,\p}}$ to the set of limit points $\mathcal{L}_{\Gamma,\p}$:
\begin{itemize}
    \item [$\bfcdot$] When $\mathcal{L}_{\Gamma,\p}=\emptyset$, the tree $\mathcal{T}_{\mathcal{L}_{\Gamma,\p}}$ consists of a single vertex.
 \item [$\bfcdot$] If $\mathcal{L}_{\Gamma,\p}$ is a perfect set, the construction of $\mathcal{T}_{\mathcal{L}_{\Gamma,\p}}$ is given in \cite[Section I.2]{GvdP}, and \cite[Section 4.9]{FvdP}. In a nutshell, it is smallest subtree of the Bruhat--Tits tree $\mathcal{T}_\p$ whose set of ends is $\mathcal{L}_{\Gamma,\p}$. It follows directly from the construction that $\pr_{\p}(\Gamma)$ acts on $\mathcal{T}_{\mathcal{L}_{\Gamma,\p}}$. 
 \item [$\bfcdot$] In case $\lvert\mathcal{L}_{\Gamma,\p}\rvert=2$, the pointwise stabilizer of $\mathcal{L}_{\Gamma,\p}$ in $\PGL_2(F_\p)$ is a maximal $F_\p$-split torus $T$ and $\pr_{\p}(\Gamma)$ is contained in the normalizer $N(T)$ of $T$.
Let $K_T\subseteq T$ the maximal compact subgroup. The quotient $T/K_T$ is an infinite cyclic group.
One defines $\mathcal{T}_{\mathcal{L}_{\Gamma,\p}}$ as the Cayley graph associated to a generator of $T/K_T$.
The normalizer $N(T)$ and, thus, also $\pr_\p(\Gamma)$ clearly act on $\mathcal{T}_{\mathcal{L}_{\Gamma,\p}}$.
It is possible to identify $\mathcal{T}_{\mathcal{L}_{\Gamma,\p}}$ with a subtree of the Bruhat--Tits tree $\mathcal{T}_\p$ of $\PGL_2(F_\p)$. More precisely, $\mathcal{T}_{\mathcal{L}_{\Gamma,\p}}$ is identified with the infinite geodesic connecting the two points in $\mathcal{L}_{\Gamma,\p}\subseteq \PP(F_\p)$.
\end{itemize}
Combining the action for all $\p\in S$ we get an action of $\Gamma$ on the locally finite polysimplical complex
\[
\mathcal{T}_\Gamma:=\prod_{\p\in S} \mathcal{T}_{\mathcal{L}_{\Gamma,\p}}.
\]
The dimension of $\mathcal{T}_\Gamma$ is given by
\[
\dim(\mathcal{T}_\Gamma)=\left|\mbf{S}_{\Gamma}\right|.
\]

\begin{lemma}\label{finitesimplex}
Let $\Gamma\subseteq \PGL_2(F_{S})$ be semi-simple plectic subgroup.
The action of $\Gamma$ on $\mathcal{T}_\Gamma$ is proper:
the stabilizer of each simplex $\sigma$ of $\mathcal{T}_\Gamma$ in $\Gamma$ is finite.
\end{lemma}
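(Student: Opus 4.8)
The plan is to exploit the product structure of $\mathcal{T}_\Gamma$ in order to write the stabilizer of a simplex as the intersection of $\Gamma$ with a subgroup of $\PGL_2(F_S)$ that is compact in the factors indexed by $\mbf{S}_\Gamma$ and equal to the whole of $\PGL_2(F_\p)$ in the remaining factors, and then to eliminate those remaining factors by invoking Proposition~\ref{induction}, Lemma~\ref{finitekernel} and Proposition~\ref{nolimits}.

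First I would observe that a simplex $\sigma$ of $\mathcal{T}_\Gamma=\prod_{\p\in S}\mathcal{T}_{\mathcal{L}_{\Gamma,\p}}$ is a product $\sigma=\prod_{\p\in S}\sigma_\p$ with $\sigma_\p$ a vertex or an edge of $\mathcal{T}_{\mathcal{L}_{\Gamma,\p}}$, and that $(\gamma_\p)_\p$ stabilizes $\sigma$ exactly when each $\gamma_\p$ stabilizes $\sigma_\p$; hence
\[
\mathrm{Stab}_\Gamma(\sigma)=\Gamma\cap\prod_{\p\in S}G_\p,\qquad G_\p:=\mathrm{Stab}_{\PGL_2(F_\p)}(\sigma_\p).
\]
For $\p\in\mbf{S}_\Gamma$ the tree $\mathcal{T}_{\mathcal{L}_{\Gamma,\p}}$ is realized as a subtree of the Bruhat--Tits tree $\mathcal{T}_\p$ — the geodesic joining the two limit points in the two-point case of Corollary~\ref{limitsetcases}, the smallest subtree with set of ends $\mathcal{L}_{\Gamma,\p}$ in the perfect case — so $\sigma_\p$ is a vertex or an edge of $\mathcal{T}_\p$, and, the $\PGL_2(F_\p)$-action on $\mathcal{T}_\p$ having compact open cell-stabilizers, $G_\p$ is a compact open subgroup of $\PGL_2(F_\p)$. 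For $\p\notin\mbf{S}_\Gamma$ the complex $\mathcal{T}_{\mathcal{L}_{\Gamma,\p}}$ is a single vertex, so $\sigma_\p$ is that vertex and $G_\p=\PGL_2(F_\p)$. Writing $T:=S\setminus\mbf{S}_\Gamma$ and $V:=\prod_{\p\in\mbf{S}_\Gamma}G_\p$ (a compact open subgroup of $\PGL_2(F_{\mbf{S}_\Gamma})$), I would then conclude
\[
\mathrm{Stab}_\Gamma(\sigma)=\Gamma\cap\big(\PGL_2(F_T)\times V\big)=\widetilde{\Gamma}_V,
\]
the last expression being formed with respect to the decomposition $S=T\cup\mbf{S}_\Gamma$, i.e.\ in the notation preceding Lemma~\ref{finitekernel} with $S_1=T$ and $U=V$.

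It then remains to prove that $\widetilde{\Gamma}_V$ is finite. Applying Proposition~\ref{induction} with $S_1=T$ and the compact open subgroup $V\subseteq\PGL_2(F_{S\setminus T})$ gives that $\Gamma_V=\pr_T(\widetilde{\Gamma}_V)\subseteq\PGL_2(F_T)$ is plectic with $\mathcal{L}_{\Gamma_V,\p}=\mathcal{L}_{\Gamma,\p}$ for all $\p\in T$; by the definition of $T$ these sets are empty, whence $\mathcal{L}_{\Gamma_V}^T=\emptyset$ and $\Gamma_V$ is finite by Proposition~\ref{nolimits}. Since $\Gamma$ is discrete and $V$ is compact open, Lemma~\ref{finitekernel} shows that $\ker\big(\pr_T\colon\widetilde{\Gamma}_V\to\Gamma_V\big)$ is finite, so $\mathrm{Stab}_\Gamma(\sigma)=\widetilde{\Gamma}_V$ is finite, being an extension of one finite group by another. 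The argument is formal once the above results are in hand; the only point calling for care is the bookkeeping in the second paragraph — establishing that $G_\p$ fails to be compact precisely when $\mathcal{L}_{\Gamma,\p}=\emptyset$, and checking that the degenerate cases cause no problem: when $\mbf{S}_\Gamma=\emptyset$ the group $\Gamma$ is already finite by Proposition~\ref{nolimits}, and when $T=\emptyset$ one has $\mathrm{Stab}_\Gamma(\sigma)=\Gamma\cap V$, which is discrete and relatively compact, hence finite, directly.
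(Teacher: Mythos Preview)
Your proof is correct and follows essentially the same route as the paper's own argument: identify $\mathrm{Stab}_\Gamma(\sigma)$ with $\widetilde{\Gamma}_V$ for a compact open $V\subseteq\PGL_2(F_{\mbf{S}_\Gamma})$, apply Proposition~\ref{induction} to see that $\Gamma_V\subseteq\PGL_2(F_{S\setminus\mbf{S}_\Gamma})$ has empty limit set, conclude finiteness of $\Gamma_V$ via Proposition~\ref{nolimits}, and finish with Lemma~\ref{finitekernel}. You have spelled out the product decomposition of the stabilizer and the degenerate cases more explicitly than the paper does, but the logical skeleton is identical.
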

\begin{proof}
For every $\p \in \mbf{S}_\Gamma$ the tree $ \mathcal{T}_{\mathcal{L}_{\Gamma,\p}}$ is a subtree of the Bruhat--Tits tree $\mathcal{T}_\p$.
The stabilizer in $\PGL_2(F_\p)$ of a finite subgraph of the Bruhat--Tits tree $\mathcal{T}_\p$ is a compact open subgroup.
Thus, for any simplex $\sigma$ of $\mathcal{T}_\Gamma$ there exists exists a compact open subgroup $U\subseteq \PGL_2(F_{\mbf{S}_\Gamma})$ such that
\[
\mathrm{Stab}_\Gamma(\sigma)= \widetilde{\Gamma}_U.
\]
By Lemma \ref{induction} the group $\Gamma_U$ does not have any limit points and, therefore, it is finite by Proposition \ref{nolimits}.
Lemma \ref{finitekernel} implies that $\widetilde{\Gamma}_U$ is finite, as well.
\end{proof}

\noindent The converse to Lemma \ref{finitesimplex} also holds.
Indeed, by \cite[Proposition 3.2]{TitsTrees}, every finite order automorphism of a tree fixes a vertex or an edge.
Therefore, we get:
\begin{lemma}\label{finiteorder}
Let $\Gamma\subseteq \PGL_2(F_{S})$ be semi-simple plectic subgroup and $\gamma\in\Gamma$ an element of finite order.
Then $\gamma$ fixes a simplex of $\mathcal{T}_\Gamma$.
\end{lemma}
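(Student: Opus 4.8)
The plan is to reduce the statement to the one-factor case and then invoke \cite[Proposition 3.2]{TitsTrees}, exactly as indicated in the sentence preceding the lemma. The key structural input is that, by construction, $\Gamma$ acts on $\mathcal{T}_\Gamma=\prod_{\p\in S}\mathcal{T}_{\mathcal{L}_{\Gamma,\p}}$ factor-wise: an element $\gamma\in\Gamma$ sends $(t_\p)_{\p\in S}$ to $(\pr_\p(\gamma)\cdot t_\p)_{\p\in S}$, where $\pr_\p(\gamma)$ acts on the locally finite tree $\mathcal{T}_{\mathcal{L}_{\Gamma,\p}}$ through the action described in the paragraph defining $\mathcal{T}_\Gamma$ (trivially when $\mathcal{L}_{\Gamma,\p}=\emptyset$, through $N(T)$ on the line $\mathcal{T}_{\mathcal{L}_{\Gamma,\p}}$ when $|\mathcal{L}_{\Gamma,\p}|=2$, and through $\pr_\p(\Gamma)\curvearrowright\mathcal{T}_{\mathcal{L}_{\Gamma,\p}}\subseteq\mathcal{T}_\p$ when $\mathcal{L}_{\Gamma,\p}$ is perfect).

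First I would observe that if $\gamma\in\Gamma$ has finite order, then its image in $\Aut(\mathcal{T}_{\mathcal{L}_{\Gamma,\p}})$ has finite order for every $\p\in S$, since the action factors through the group homomorphism $\PGL_2(F_\p)\to\Aut(\mathcal{T}_\p)$ restricted to the invariant subtree. Next, I would apply \cite[Proposition 3.2]{TitsTrees} to each $\pr_\p(\gamma)$: every finite-order automorphism of a tree fixes a vertex or stabilizes an edge, so there is a simplex $\sigma_\p$ of $\mathcal{T}_{\mathcal{L}_{\Gamma,\p}}$ (a vertex or an edge) with $\pr_\p(\gamma)\cdot\sigma_\p=\sigma_\p$. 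Finally, I would take $\sigma:=\prod_{\p\in S}\sigma_\p$, which is a simplex of the polysimplicial complex $\mathcal{T}_\Gamma$ in the sense used throughout this section, and note that by factor-wiseness $\gamma\cdot\sigma=\prod_{\p\in S}\bigl(\pr_\p(\gamma)\cdot\sigma_\p\bigr)=\prod_{\p\in S}\sigma_\p=\sigma$, so $\gamma$ fixes $\sigma$.

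There is no substantial obstacle here: the entire content sits in \cite[Proposition 3.2]{TitsTrees}, and the passage from the factors to the product is immediate once factor-wiseness of the action is recorded. The only point deserving a word of care is the convention about edge inversions: an automorphism that swaps the endpoints of an edge still stabilizes that edge, so the product simplex $\sigma$ is genuinely preserved by $\gamma$ as a cell of $\mathcal{T}_\Gamma$ — this is the correct reading of ``fixes a simplex'' here, consistent with its use in Lemma \ref{finitesimplex}. If one wanted instead a pointwise-fixed cell, one would first replace each tree factor by its barycentric subdivision, which eliminates inversions without affecting the hypotheses.
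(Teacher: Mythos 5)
Your argument is exactly the one the paper has in mind: the sentence immediately before the lemma cites Tits' result (every finite-order tree automorphism fixes a vertex or an edge) and leaves the rest implicit, and you have simply spelled out the factor-wise action and taken the product of the fixed simplices $\sigma_\p$. The remark about edge inversions is a reasonable clarification of the convention but not a genuine departure.
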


\noindent In accordance with \cite[Definition, p.~192]{Mustafin}, we consider the following finiteness condition:
\begin{definition}
A semi-simple plectic subgroup $\Gamma\subseteq \PGL_2(F_{S})$ is called a \emph{normal plectic subgroup} if the quotient
$\Gamma\backslash \mathcal{T}_\Gamma$ is a finite polysimplical complex.
\end{definition}
\noindent Every subgroup commensurable to a normal plectic subgroup is normal.
Moreover, if $S_1, S_2 \subseteq S$ are disjoint subsets and $\Gamma_i\subseteq \PGL_2(F_{S_i})$ are normal plectic subgroup, then the product $\Gamma_1\times \Gamma_2$ is normal.

\begin{example}
Any discrete cocompact subgroup $\Gamma\subseteq \PGL_2(F_{S})$ is a normal plectic subgroup.
\end{example}

\noindent A semi-simple subgroup $\Gamma\subseteq \PGL_2(F_\p)$ is normal if and only if it is finitely generated (\cite{GvdP}, Ch. I, Lemma 3.2.2).
In higher dimension one implication still holds.
Indeed, the hypothesis of \cite[Theorem 4]{Brownpresentations} are fulfilled by Lemma \ref{finitesimplex}. Therefore: 
\begin{proposition}
Every normal plectic subgroup is finitely presented.
\end{proposition}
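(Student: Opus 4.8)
The plan is to deduce this directly from Ken Brown's criterion for finite presentability of groups acting on simply connected complexes, namely \cite[Theorem 4]{Brownpresentations}: a group acting cellularly on a simply connected CW-complex, with only finitely many orbits of cells in dimensions $\le 2$, with finitely presented vertex stabilizers and finitely generated edge stabilizers, is finitely presented. So the whole task reduces to checking these hypotheses for the action of $\Gamma$ on $\mathcal{T}_\Gamma$.

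First I would verify simple connectivity of the complex acted upon. By construction $\mathcal{T}_\Gamma=\prod_{\p\in S}\mathcal{T}_{\mathcal{L}_{\Gamma,\p}}$ is a \emph{finite} product of trees, the factors indexed by $\p\notin\mbf{S}_\Gamma$ being single vertices, and a tree is contractible; hence $\mathcal{T}_\Gamma$ is contractible, a fortiori simply connected. The action of $\Gamma$ is cellular, being the product of the cellular actions of $\pr_\p(\Gamma)$ on the tree factors. If one needs the action to be admissible (so that the quotient inherits a CW-structure), one first passes to the barycentric subdivision of each tree factor, which changes neither the contractibility nor the finiteness of the quotient.

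Next I would observe that the normality hypothesis says precisely that $\Gamma\backslash\mathcal{T}_\Gamma$ is a finite polysimplicial complex, so $\Gamma$ has only finitely many orbits of cells, in particular in dimensions $\le 2$; thus the quotient has finite $2$-skeleton. Finally, Lemma \ref{finitesimplex} shows that every simplex of $\mathcal{T}_\Gamma$ has finite stabilizer in $\Gamma$, and finite groups are finitely presented; hence the conditions on vertex stabilizers (finitely presented) and edge stabilizers (finitely generated) are satisfied. Brown's theorem then yields that $\Gamma$ is finitely presented.

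I do not expect a serious obstacle: the only point requiring care is matching the precise hypotheses of \cite{Brownpresentations}—admissibility of the action and the exact connectivity/finiteness conditions—which is handled by the barycentric subdivision and by Lemma \ref{finitesimplex}. The substantive content is already packaged into Lemma \ref{finitesimplex} together with the elementary fact that a finite product of trees is contractible.
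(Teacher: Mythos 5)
Your proposal is correct and takes essentially the same route as the paper: the paper also invokes \cite[Theorem 4]{Brownpresentations}, pointing to Lemma \ref{finitesimplex} (finite stabilizers) together with the normality hypothesis (finite quotient) and the contractibility of $\mathcal{T}_\Gamma$ as supplying the hypotheses. You merely spell out in detail the verification that the paper compresses into a single sentence.
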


Normal plectic subgroups also have nice cohomological finiteness properties as the next two propositions show.
For cohomological notions such as groups of type (WFL) and duality groups we refer to \cite[Section VIII]{Brownbook}.
\begin{proposition}\label{WFL}
Every normal plectic subgroup is of type (WFL).
\end{proposition}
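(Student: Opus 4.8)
\emph{Proof proposal.} The plan is to manufacture an explicit $\Z\Gamma$-resolution of the trivial module $\Z$ out of the cellular chain complex of $\mathcal{T}_\Gamma$, and to read off from Lemma \ref{finitesimplex} and the normality hypothesis that it has exactly the shape demanded by type (WFL).

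First I would observe that $\mathcal{T}_\Gamma$ is contractible. Since a normal plectic subgroup is by definition semi-simple, Corollary \ref{limitsetcases} shows that for each $\p\in S$ the factor $\mathcal{T}_{\mathcal{L}_{\Gamma,\p}}$ is either a single vertex, an infinite geodesic line, or a subtree of a Bruhat--Tits tree, and in all three cases it is contractible; as $S$ is finite, the product $\mathcal{T}_\Gamma=\prod_{\p\in S}\mathcal{T}_{\mathcal{L}_{\Gamma,\p}}$ is contractible as well. Hence the augmented cellular chain complex
\[
0\too C_d(\mathcal{T}_\Gamma;\Z)\too\cdots\too C_0(\mathcal{T}_\Gamma;\Z)\too\Z\too 0,\qquad d:=\dim\mathcal{T}_\Gamma=|\mbf{S}_\Gamma|,
\]
is an exact sequence of $\Z\Gamma$-modules, and it has finite length because $d\le|S|<\infty$. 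Because $\Gamma$ is normal, the quotient $\Gamma\backslash\mathcal{T}_\Gamma$ is a finite polysimplicial complex, so there are only finitely many $\Gamma$-orbits of cells in each degree; choosing orbit representatives $\sigma_1,\dots,\sigma_{m_i}$ of the $i$-cells yields an isomorphism of $\Z\Gamma$-modules $C_i(\mathcal{T}_\Gamma;\Z)\cong\bigoplus_{k=1}^{m_i}\Z[\Gamma/\Gamma_{\sigma_k}]$, once the action has been arranged so that each stabilizer fixes its cell pointwise. By Lemma \ref{finitesimplex} every $\Gamma_{\sigma_k}$ is finite, so each $C_i$ is a finite direct sum of permutation modules induced from finite subgroups; together with the finite length of the resolution, this is precisely the statement that $\Gamma$ is of type (WFL).

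The only step that requires care — and the point I expect to be the main, if minor, obstacle — is the parenthetical arrangement above: an element of a cell stabilizer in a product of trees can reverse the orientation of an edge factor, so a priori $C_i$ is only a finite sum of \emph{twisted} permutation modules $\Z[\Gamma/\Gamma_{\sigma_k}]\otimes_{\Z}\chi_k$. I would dispose of this in the standard way by replacing $\mathcal{T}_\Gamma$ with a $\Gamma$-equivariant subdivision on which $\Gamma$ acts without inversions (for instance the barycentric subdivision of a $\Gamma$-equivariant triangulation): this remains a finite-dimensional contractible $\Gamma$-CW-complex with finite cell stabilizers (Lemma \ref{finitesimplex}) and finite quotient (normality), and on it all stabilizers fix their cells pointwise, so the chain modules are genuine finite sums of $\Z[\Gamma/F]$ with $F$ finite. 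Alternatively, when $\Gamma$ is virtually torsion-free — e.g.\ in characteristic zero, by Selberg's lemma applied to the finitely generated linear group $\Gamma$ — a torsion-free finite-index subgroup $\Gamma'$ acts on $\mathcal{T}_\Gamma$ freely (an inversion would have finite order by Lemma \ref{finitesimplex}) and cocompactly, so $C_*(\mathcal{T}_\Gamma)$ is a finite free $\Z\Gamma'$-resolution of $\Z$, exhibiting $\Gamma$ as virtually of type (FL). Apart from this orientation bookkeeping, the argument is formal and rests entirely on Lemma \ref{finitesimplex} and the normality hypothesis.
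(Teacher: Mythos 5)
Your approach is genuinely different from the paper's, and as written it has a gap. The paper invokes Brown's Theorem~VIII.11.1, which in the presence of the proper cocompact action of $\Gamma$ on the finite-dimensional contractible complex $\mathcal{T}_\Gamma$ reduces type~(WFL) to the assertion that $\Gamma$ is \emph{virtually torsion-free}; the entire proof in the paper is devoted to that one point. Your main argument instead tries to read off (WFL) directly from the subdivided cellular chain complex. That is not by itself a proof: in Brown's framework the (WFL) property is a statement about groups of finite virtual cohomological dimension, which already presupposes the existence of a torsion-free finite-index subgroup, so the resolution you build (while correct as a complex of $\Z\Gamma$-modules, and correctly repaired via barycentric subdivision) does not deliver (WFL) until you have also supplied virtual torsion-freeness. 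You are aware of this and offer it as an ``alternative,'' but it should really be the core of the argument, exactly as in the paper.

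The more concrete defect is in how you produce the torsion-free finite-index subgroup. You invoke Selberg's lemma, explicitly flagging ``in characteristic zero.'' But this paper works over local fields of either residue characteristic, \emph{including} positive characteristic ($F_\p$ a finite extension of $\mathbf{F}_p(\hspace{-0.5mm}(T)\hspace{-0.5mm})$), where Selberg's lemma in its classical form does not apply. The paper's argument is deliberately characteristic-agnostic: finite generation of $\Gamma$ (from the previous proposition) puts $\Gamma$ inside $\prod_\p \PGL_2(R_\p)$ with each $R_\p$ a finitely generated ring; Lemmas~\ref{finitesimplex} and~\ref{finiteorder} show there are only finitely many conjugacy classes of torsion elements; and one then chooses maximal ideals $\m_\p\subseteq R_\p$ so that the congruence kernel of $\Gamma\to\prod_\p\PGL_2(R_\p/\m_\p)$ misses the chosen nontrivial torsion representatives, yielding a torsion-free normal subgroup of finite index. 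That step is what your proposal is missing; the ``orientation bookkeeping'' you identify as the main obstacle is real but routine, whereas the substance of the proof lies in virtual torsion-freeness in arbitrary residue characteristic.
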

\begin{proof}
By \cite[Theorem 11.1]{Brownbook} it is enough to show that every normal plectic subgroup $\Gamma\subseteq \PGL_2(F_{S})$ is virtually torsion-free.
This can be argued as in the proof of \cite[Ch. I.3, Theorem 3.1]{GvdP}:
let $\{\sigma_1,\ldots,\sigma_r\}$ be a set of simplices of $\mathcal{T}_\Gamma$ for the orbits of the $\Gamma$-action.
By Lemma \ref{finiteorder}, every element of finite order of $\Gamma$ is conjugated to an element that fixes some $\sigma_i$, $1\leq i\leq r$.
Thus, Lemma \ref{finitesimplex} implies that the set of conjugacy classes of finite order elements of $\Gamma$ is finite.
Take a set of representatives $\mathcal{R}:=\{\gamma_1,\ldots,\gamma_t\}$ of these conjugacy classes.
Since $\Gamma$ is finitely generated, there exist subrings $R_\p\subseteq F_\p$, $\p\in S$, which are finitely generated over their prime ring and such that $\Gamma\subseteq \prod_{\p\in S}\PGL_2(R_\p)$.
Choose maximal ideals $\m_\p\subseteq R_\p$ such that the intersection of $\mathcal{R}$ with the kernel of the map
\begin{equation}\label{mapp}
\Gamma \intoo \prod_{\p\in S}\PGL_2(R_\p) \ontoo \prod_{\p\in S}\PGL_2(R_\p/\m_\p)
\end{equation}
consists only of the unit element.
The kernel of \eqref{mapp} is a torsion-free normal subgroup of $\Gamma$ of finite index.
\end{proof}

Before stating the next proposition we need to introduce some notations.
\begin{definition}
Let $\Gamma\subseteq \PGL_2(F_S)$ a plectic subgroup and $\p$ an element of $S$.
The \emph{$\p$-th Steinberg module} attached to $\Gamma$ is the free abelian group
\[
\St_{\Gamma,\p}=C(\mathcal{L}_{\Gamma,\p},\Z)/\Z
\]
of locally constant $\Z$-valued functions on $\mathcal{L}_{\Gamma,\p}$ modulo constant functions.
It naturally carries a $\Gamma$-action.
The \emph{$S$-Steinberg module} of $\Gamma$ is defined as the tensor product
\[
\St_{\Gamma,S}=\bigotimes_{\p\in S} \St_{\Gamma,\p}
\]
of abelian groups.
The \emph{$S$-orientation character} of $\PGL(F_{S})$ is the homomorphism
\[
\chi_{S}^{\mathrm{or}}\colon \PGL(F_{S})\too \{\pm 1\},\quad (g_\p)_{\p\in S}\mapstoo (-1)^{\sum_{\p\in S} \ord_\p(\det(g_\p))}.
\]
\end{definition}
Note that if $\mathcal{L}_{\Gamma,\p}=\PP(F_\p)$, the module $\St_{\Gamma,\p}$ is the smooth $\Z$-valued Steinberg representation of $\PGL_2(F_\p)$; while, if $\mathcal{L}_{\Gamma,\p}$ is empty, then $\St_{\p}(\Gamma)=\{0\}$.
This implies that $\St_{\Gamma,S}=0$ unless $S=\mbf{S}_{\Gamma}$.
As a product of trees, the polysimplical complex $\mathcal{T}_\Gamma$ is orientable.
Furthermore, the action of an element $\gamma\in\Gamma$ preserves the orientation on $\mathcal{T}_\Gamma$ if and only if $\chi_{\mbf{S}_{\Gamma}}^{\mathrm{or}}(\gamma)=1$.

\begin{proposition}\label{duality}
Let $\Gamma$ be a normal plectic subgroup. Then $\Gamma$ is a virtual duality group of virtual cohomological dimension
\[
\mathrm{vcd}(\Gamma)=\dim(\mathcal{T}_\Gamma).
\]
Its duality module is equal to $\big(\St_{\Gamma,\mbf{S}_{\Gamma}}\big)\otimes\chi_{\mbf{S}_{\Gamma}}^{\mathrm{or}}$.
\end{proposition}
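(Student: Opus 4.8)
The plan is to deduce the duality statement from the action of $\Gamma$ on the contractible polysimplicial complex $\mathcal{T}_\Gamma$, following the standard machinery for groups acting on finite-dimensional contractible complexes with finite stabilizers (Brown's book, Ch.~VIII, together with Bieri--Eckmann duality). By Proposition~\ref{WFL} the group $\Gamma$ is virtually torsion-free, so it suffices to treat a torsion-free finite-index normal subgroup $\Gamma'\subseteq\Gamma$ and then invoke the fact that virtual cohomological dimension and the virtual duality property are insensitive to passing to finite-index subgroups. For such a $\Gamma'$, Lemma~\ref{finitesimplex} shows the action on $\mathcal{T}_\Gamma$ is free (finite stabilizers of a torsion-free group are trivial), and normality of $\Gamma$ gives that $\Gamma'\backslash\mathcal{T}_\Gamma$ is a finite polysimplicial complex; since $\mathcal{T}_\Gamma$ is a product of trees it is contractible, so $\Gamma'\backslash\mathcal{T}_\Gamma$ is a finite $K(\Gamma',1)$. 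Hence $\Gamma'$ is of type (FL), $\mathrm{cd}(\Gamma')\leq\dim(\mathcal{T}_\Gamma)=|\mbf{S}_\Gamma|$, and by Bieri--Eckmann $\Gamma'$ is a duality group of dimension $n:=\dim(\mathcal{T}_\Gamma)$ if and only if $\mrm{H}^i(\Gamma',\Z\Gamma')=0$ for $i\neq n$ and $\mrm{H}^n(\Gamma',\Z\Gamma')$ is torsion-free; the duality module is then $D:=\mrm{H}^n(\Gamma',\Z\Gamma')$.

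The computational heart of the argument is to identify $\mrm{H}^\ast_c(\mathcal{T}_\Gamma,\Z)$, which by the freeness of the action computes $\mrm{H}^\ast(\Gamma',\Z\Gamma')$ (via $\mrm{H}^\ast(\Gamma',\Z\Gamma')\cong\mrm{H}^\ast_c(\mathcal{T}_\Gamma,\Z)$ for a free cocompact action on a contractible complex). Because $\mathcal{T}_\Gamma=\prod_{\p\in\mbf{S}_\Gamma}\mathcal{T}_{\mathcal{L}_{\Gamma,\p}}$ is a finite product of locally finite trees, a Künneth argument for compactly supported cohomology reduces this to the one-variable computation: for a locally finite tree $\mathcal{T}_{\mathcal{L}}$ whose set of ends is $\mathcal{L}$ (perfect, or of cardinality $2$), one has $\mrm{H}^0_c=0$, $\mrm{H}^1_c(\mathcal{T}_{\mathcal{L}},\Z)\cong C(\mathcal{L},\Z)/\Z=\St_{\Gamma,\p}$, and higher $\mrm{H}^i_c$ vanish. (The perfect-set case is the classical identification of the compactly supported cohomology of the Bruhat--Tits subtree with the Steinberg module, as in \cite[Section I.2]{GvdP}; the two-element case is a direct check on a bi-infinite line, giving $C(\{\text{2 pts}\},\Z)/\Z\cong\Z$.) Tensoring over $\p\in\mbf{S}_\Gamma$ yields $\mrm{H}^n_c(\mathcal{T}_\Gamma,\Z)\cong\bigotimes_{\p\in\mbf{S}_\Gamma}\St_{\Gamma,\p}=\St_{\Gamma,\mbf{S}_\Gamma}$ concentrated in degree $n$, which is free abelian, hence torsion-free; this simultaneously establishes the duality property, pins down $\mathrm{vcd}(\Gamma)=n$, and identifies the duality module of $\Gamma'$ with $\St_{\Gamma,\mbf{S}_\Gamma}$.

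Finally I would track the $\Gamma$-action to get the twist by the orientation character. The duality module of $\Gamma$ (as opposed to $\Gamma'$) is $\mrm{H}^n_c(\mathcal{T}_\Gamma,\Z)$ with its natural $\Gamma$-module structure; the point is that this $\Gamma$-action differs from the ``obvious'' action on $\St_{\Gamma,\mbf{S}_\Gamma}=\bigotimes_\p C(\mathcal{L}_{\Gamma,\p},\Z)/\Z$ precisely by the sign with which $\gamma$ permutes the top-dimensional cells of $\mathcal{T}_\Gamma$, i.e.\ by whether $\gamma$ preserves or reverses the chosen orientation of the product of trees. As recorded in the paragraph preceding the proposition, $\gamma$ preserves the orientation exactly when $\chi^{\mathrm{or}}_{\mbf{S}_\Gamma}(\gamma)=1$, so the $\Gamma$-module $\mrm{H}^n_c(\mathcal{T}_\Gamma,\Z)$ is $\St_{\Gamma,\mbf{S}_\Gamma}\otimes\chi^{\mathrm{or}}_{\mbf{S}_\Gamma}$, as claimed. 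I expect the main obstacle to be the careful bookkeeping in this last step: making the Künneth isomorphism $\Gamma$-equivariant and correctly matching the sign coming from reordering/reorienting cells in the product complex with the stated orientation character — the topology and the algebra are each routine, but reconciling the two requires attention to signs.
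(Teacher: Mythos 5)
Your proposal is correct and follows essentially the same route as the paper: reduce to a torsion-free finite-index subgroup, invoke Brown's criteria (Ch.~VIII, Prop.~7.5 and Thm.~10.1) for the free cocompact action on the contractible complex $\mathcal{T}_\Gamma$, and compute $\mrm{H}^\ast_c(\mathcal{T}_\Gamma,\Z)$ by K\"unneth from the one-tree identification $\mrm{H}^1_c(\mathcal{T}_{\mathcal{L}_{\Gamma,\p}},\Z)\cong\St_{\Gamma,\p}$. The only difference is one of exposition: you unpack the orientation-character twist explicitly, whereas the paper leaves it implicit in the cited theorems and in the paragraph preceding the proposition.
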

\begin{proof}
By Proposition 7.5 and Theorem 10.1 of \cite[Chapter VIII]{Brownbook} it is enough to prove that the integral cohomology with compact supports of $\mathcal{T}_\Gamma$ vanishes outside of degree $d=\dim(\mathcal{T}_\Gamma)$ and is equal to $\St_{\Gamma,\mbf{S}_{\Gamma}}$ in degree $d$.
For $\p\in \mbf{S}_{\Gamma}$ the local tree $\mathcal{T}_{\Gamma,\p}$ has no leaves and its set of endpoints is exactly $\mathcal{L}_{\Gamma,\p}$.
A direct calculation shows that the cohomology with compact supports of $\mathcal{T}_{\Gamma,\p}$ is supported in degree $1$ and equal to $\St_{\Gamma,\p}$.
Thus, the claim follows from the Künneth formula for cohomology with compact supports.
\end{proof}

\begin{corollary}\label{fingen}
Let $\Gamma$ be a normal plectic subgroup. 
Then the module of $\Gamma$-coinvariants of $\St_{\Gamma,S}$ is a finitely generated abelian group.
\end{corollary}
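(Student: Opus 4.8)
The statement to prove is Corollary \ref{fingen}: the $\Gamma$-coinvariants $(\St_{\Gamma,S})_\Gamma$ form a finitely generated abelian group. First I would observe that by the remark preceding Proposition \ref{duality}, $\St_{\Gamma,S}=0$ unless $S=\mbf{S}_\Gamma$, so we may harmlessly assume $S=\mbf{S}_\Gamma$ and write $d=\dim(\mathcal{T}_\Gamma)=|\mbf{S}_\Gamma|=\mathrm{vcd}(\Gamma)$. The key input is Proposition \ref{duality}, which identifies $\St_{\Gamma,\mbf{S}_\Gamma}$ (up to the orientation twist $\chi^{\mathrm{or}}_{\mbf{S}_\Gamma}$, which is irrelevant for finite generation of coinvariants since twisting by a character through $\{\pm1\}$ changes nothing essential) with the top-degree compactly supported cohomology $\mrm{H}^d_c(\mathcal{T}_\Gamma,\Z)$, and shows this is the dualizing module of the virtual duality group $\Gamma$.

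**Main step.** The plan is to invoke the duality. Since $\Gamma$ is a virtual duality group of dimension $d$ with dualizing module $D=\St_{\Gamma,\mbf{S}_\Gamma}\otimes\chi^{\mathrm{or}}_{\mbf{S}_\Gamma}$, and since $\Gamma$ is of type (WFL) by Proposition \ref{WFL} (equivalently, by Proposition \ref{duality}'s proof, $\mathcal{T}_\Gamma$ is a finite-dimensional contractible $\Gamma$-CW complex with finite stabilizers and finitely many cells mod $\Gamma$, because $\Gamma$ is normal), Bieri--Eckmann duality gives natural isomorphisms
\[
\mrm{H}_k(\Gamma,M)\ \cong\ \mrm{H}^{d-k}(\Gamma,\ D\otimes M)
\]
for every $\Gamma$-module $M$, at least after passing to a torsion-free finite-index subgroup (which exists by the proof of Proposition \ref{WFL}); more directly, one uses that $\Gamma$ acts on the finite-type contractible complex $\mathcal{T}_\Gamma$ with finite stabilizers, so that $\mrm{H}_*(\Gamma,-)$ and $\mrm{H}^*(\Gamma,-)$ are computed by the finitely generated (over $\Z[\Gamma]$) cellular chain complex $C_\bullet(\mathcal{T}_\Gamma)$. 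Taking $M=\Z$ with trivial action, the coinvariants $(\St_{\Gamma,S})_\Gamma = \mrm{H}_0(\Gamma,\St_{\Gamma,S})$ (ignoring the harmless orientation twist) correspond to $\mrm{H}^d(\Gamma,\Z)$ up to the twist, i.e. to a subquotient of $\mrm{H}^d$ of a cochain complex built from finitely many cells. Concretely: $(\St_{\Gamma,S})_\Gamma = \mrm{H}_0(\Gamma, \mrm{H}^d_c(\mathcal{T}_\Gamma,\Z))$, and since $\mathcal{T}_\Gamma$ has cohomology with compact supports concentrated in degree $d$, an equivariant spectral sequence (or just the fact that $C_c^\bullet(\mathcal{T}_\Gamma)\otimes_{\Z[\Gamma]}\Z$ computes $\mrm{H}^*_c(\Gamma\backslash\mathcal{T}_\Gamma$-with-coefficients$)$ up to finite-stabilizer corrections) identifies $(\St_{\Gamma,S})_\Gamma$ with a subquotient of $C_c^d(\mathcal{T}_\Gamma)\otimes_{\Z[\Gamma]}\Z$, which is free of finite rank because $\Gamma\backslash\mathcal{T}_\Gamma$ has finitely many $d$-cells.

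**The clean argument.** I think the cleanest route avoids spectral-sequence bookkeeping: the compactly supported cellular cochain complex $C^\bullet_c(\mathcal{T}_\Gamma,\Z)$ is a complex of $\Z[\Gamma]$-modules, finitely generated in each degree (in degree $k$ it is $\bigoplus_\sigma \Z[\Gamma/\Gamma_\sigma]$ over the finitely many $\Gamma$-orbits of $k$-cells, with finite stabilizers $\Gamma_\sigma$), and by Proposition \ref{duality}'s proof its cohomology is $\St_{\Gamma,S}$ concentrated in degree $d$. Applying the right-exact functor $-\otimes_{\Z[\Gamma]}\Z$ and using right-exactness together with the concentration in top degree, one gets a surjection
\[
C^d_c(\mathcal{T}_\Gamma,\Z)\otimes_{\Z[\Gamma]}\Z\ \onto\ \mrm{H}^d\big(C^\bullet_c(\mathcal{T}_\Gamma,\Z)\big)\otimes_{\Z[\Gamma]}\Z = (\St_{\Gamma,S})_\Gamma
\]
(the twist by $\chi^{\mathrm{or}}$ only permutes basis elements up to sign and does not affect finite generation). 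The left-hand side is $\bigoplus_{\sigma}\Z$ over the finitely many $\Gamma$-orbits of $d$-cells in $\mathcal{T}_\Gamma$, hence finitely generated; therefore so is its quotient $(\St_{\Gamma,S})_\Gamma$. The main obstacle is purely expository: making precise that ``cohomology concentrated in top degree'' lets one read off the coinvariants of the top cohomology as a quotient of the top cochains after tensoring down — this is a standard right-exactness argument but needs the normality hypothesis (finiteness of $\Gamma\backslash\mathcal{T}_\Gamma$) to conclude finite generation, and one should note the orientation twist is benign. No genuinely hard step remains once Propositions \ref{WFL} and \ref{duality} are in hand.
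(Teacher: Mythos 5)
Your argument is correct, and it takes a genuinely different and in fact more elementary route than the paper. The paper invokes Bieri--Eckmann duality (Proposition \ref{duality}) to rewrite $(\St_{\Gamma,S})_\Gamma$ as $\mrm{H}^{|S|}(\Gamma,\chi_S^{\mrm{or}})$, then cites type (FL) from Proposition \ref{WFL} for finite generation, and finally handles the non--torsion-free case separately via Hochschild--Serre after passing to a finite-index torsion-free subgroup. You bypass all three steps: since $\Gamma\backslash\mathcal{T}_\Gamma$ is finite, $C^d_c(\mathcal{T}_\Gamma,\Z)\cong\bigoplus_\sigma\mrm{Ind}_{\Gamma_\sigma}^\Gamma\Z_\sigma$ is finitely generated over $\Z[\Gamma]$ (finite stabilizers cause no trouble), and the surjection $C^d_c\to\mrm{H}^d_c$ stays surjective after applying the right-exact functor $-\otimes_{\Z[\Gamma]}\Z$. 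This reads off finite generation directly and uniformly, without the torsion-free/non-torsion-free case split. The only place where you are slightly cavalier is the orientation twist: the natural identification is $\mrm{H}^d_c(\mathcal{T}_\Gamma,\Z)\cong\St_{\Gamma,S}\otimes\chi^{\mrm{or}}_S$ as $\Gamma$-modules, and $(\St\otimes\chi^{\mrm{or}})_\Gamma$ and $(\St)_\Gamma$ are genuinely different abelian groups, so ``changes nothing essential'' needs a word of justification. The fix is immediate and does not alter the strategy: twist the surjection to get $C^d_c\otimes\chi^{\mrm{or}}_S\onto\St_{\Gamma,S}$, observe that $C^d_c\otimes\chi^{\mrm{or}}_S$ is still finitely generated over $\Z[\Gamma]$ (twisting by a rank-one sign character preserves this), and apply $-\otimes_{\Z[\Gamma]}\Z$ as before. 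With that one sentence added, your proof is complete and, I would say, cleaner than the paper's.
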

\begin{proof}
If $S\neq \mbf{S}_{\Gamma}$, then $\St_{\Gamma,S}=0$ and the claim is obvious.
So suppose that $S=\mbf{S}_{\Gamma}$.
In case $\Gamma$ is torsion-free, Proposition \ref{duality} implies that
\[
\left(\St_{\Gamma,S}\right)_{\Gamma}:=\mrm{H}_0\big(\Gamma,\St_{\Gamma,S}\big)\cong\mrm{H}^{|S|}\big(\Gamma, \chi_{S}^\mrm{or}\big)
\]
The right hand side is finitely generated since $\Gamma$ is of type $(FL)$ by Proposition \ref{WFL}.
In case $\Gamma$ is not torsion-free, the claim follows by passing to a torsion-free finite-index subgroup and using the Hochschild--Serre spectral sequence.
\end{proof}

We end this section by showing that the property of being normal is preserved by intersections with compact open subgroups.
\begin{lemma}\label{normalintersect}
Let $\Gamma\subseteq \PGL_2(F_{S})$ be a normal plectic subgroup.
For every subset $\mbf{S}_{\Gamma}\subseteq S_1\subseteq S$ and every compact open subgroup $U\subseteq \PGL_2(F_{S\setminus S_1})$ the group $\Gamma_U$ is a normal plectic subgroup.
\end{lemma}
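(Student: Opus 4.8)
The goal is to show that if $\Gamma \subseteq \PGL_2(F_S)$ is normal plectic and $U \subseteq \PGL_2(F_{S \setminus S_1})$ is compact open with $\mathbf{S}_\Gamma \subseteq S_1$, then $\Gamma_U$ is normal plectic. First I would record that Proposition~\ref{induction} already gives that $\Gamma_U \subseteq \PGL_2(F_{S_1})$ is plectic with $\mathcal{L}_{\Gamma_U,\p} = \mathcal{L}_{\Gamma,\p}$ for all $\p \in S_1$; in particular $\mathbf{S}_{\Gamma_U} = \mathbf{S}_\Gamma$, and the corollary following Proposition~\ref{induction} gives that $\Gamma_U$ is semi-simple. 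So the only thing left to prove is the finiteness condition: that $\Gamma_U \backslash \mathcal{T}_{\Gamma_U}$ is a finite polysimplicial complex. Since $\mathbf{S}_{\Gamma_U} = \mathbf{S}_\Gamma$ and the local trees $\mathcal{T}_{\mathcal{L}_{\Gamma_U,\p}}$ depend only on the sets $\mathcal{L}_{\Gamma,\p}$ for $\p \in S_1 \supseteq \mathbf{S}_\Gamma$, we have in fact $\mathcal{T}_{\Gamma_U} = \mathcal{T}_\Gamma$ as polysimplicial complexes (recall the $\p$-th factor is a single vertex whenever $\mathcal{L}_{\Gamma,\p} = \emptyset$, so the extra coordinates in $S_1 \setminus \mathbf{S}_\Gamma$ contribute nothing). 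Thus I must compare the $\Gamma_U$-action on $\mathcal{T}_\Gamma$ with the $\Gamma$-action on $\mathcal{T}_\Gamma$.

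Next I would reduce to the case $S \setminus S_1 = \{\q\}$ a singleton and $U = K_\q = \PGL_2(\mathcal{O}_\q)$, exactly as in the proof of Proposition~\ref{induction}: invoking Lemma~\ref{finiteindex} (commensurable subgroups are interchangeable for these purposes) one shrinks $U$ to a product of maximal compacts and then, one factor at a time, conjugates so that $U = K_\q$. Now $\widetilde{\Gamma}_U = \Gamma \cap (\PGL_2(F_{S_1}) \times K_\q)$ is the stabilizer in $\Gamma$ of the vertex $v \in \mathcal{T}_\q$ fixed by $K_\q$, intersected appropriately; more precisely, $\widetilde{\Gamma}_U$ is exactly the subgroup of $\Gamma$ preserving the "slab" $\mathcal{T}_\Gamma \times \{v\} \subseteq \mathcal{T}_\Gamma \times \mathcal{T}_\q$. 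The key combinatorial point is that $\Gamma$ acts on $\mathcal{T}_\Gamma \times \mathcal{T}_\q$ with finite quotient (normality of $\Gamma$ — note that when $\q \notin \mathbf{S}_\Gamma$ the factor $\mathcal{T}_\q$ truly is extra and this is really just the statement that $\mathcal{T}_{\Gamma}$ has finite quotient, but one still needs to see the slab argument to extract it), and I want to deduce that $\widetilde{\Gamma}_U$ acts on the slab with finite quotient. Since $\mathcal{T}_\q$ is a tree on which $\pr_\q(\Gamma)$ acts, and the $\Gamma$-orbits of vertices of $\mathcal{T}_\q$ meeting the relevant portion are finite, one chooses finitely many $\Gamma$-orbit representatives of simplices of $\mathcal{T}_\Gamma \times \mathcal{T}_\q$, translates each so its $\mathcal{T}_\q$-coordinate lies over $v$ (possible because $\pr_\q(\Gamma)$ is transitive on, or has finitely many orbits on, the vertices of $\mathcal{T}_\q$ it hits — here one uses that $K_\q$ is a maximal compact and the Bruhat--Tits tree is homogeneous, so all vertices in a single $\pr_\q(\Gamma)$-orbit of $v$ are relevant), and observes that the stabilizer of the slab is exactly $\widetilde{\Gamma}_U$, so finitely many $\widetilde{\Gamma}_U$-orbits of simplices suffice to cover the slab. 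Finally Lemma~\ref{finitekernel} says $\pr_{S_1}\colon \widetilde{\Gamma}_U \to \Gamma_U$ has finite kernel, and an epimorphism with finite kernel carries a finite quotient complex to a finite quotient complex, so $\Gamma_U \backslash \mathcal{T}_{\Gamma_U}$ is finite.

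The main obstacle I anticipate is the orbit-counting step: making precise why finiteness of $\Gamma \backslash (\mathcal{T}_\Gamma \times \mathcal{T}_\q)$ implies finiteness of $\widetilde{\Gamma}_U \backslash (\mathcal{T}_\Gamma \times \{v\})$. The subtlety is that a $\Gamma$-orbit of a simplex $\sigma \times \{v\}$ may hit the slab $\mathcal{T}_\Gamma \times \{v\}$ in a union of several $\widetilde{\Gamma}_U$-orbits, but this union is indexed by the double coset $\widetilde{\Gamma}_U \backslash \mathrm{Stab}_{\pr_\q(\Gamma)}\text{-translates}$, equivalently by the $\pr_\q(\Gamma)$-orbit of $v$ modulo the part that fixes $v$ — and this is finite precisely because $\pr_\q(\Gamma)$ acts on the tree $\mathcal{T}_\q$ with the property that vertex-stabilizers are commensurable (both are $\PGL_2$-arithmetic-type subgroups intersected with compacts, hence the index $[\pr_\q(\Gamma) : \mathrm{Stab}(v)]$-type quantity is controlled by $\Gamma \backslash \mathcal{T}_\q$ being a quotient of the finite complex $\Gamma \backslash \mathcal{T}_\Gamma \times \mathcal{T}_\q$ under projection). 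Said differently: the number of $\widetilde{\Gamma}_U$-orbits on the vertices of $\mathcal{T}_\Gamma \times \{v\}$ is at most the number of $\Gamma$-orbits on vertices of $\mathcal{T}_\Gamma \times \mathcal{T}_\q$ times the maximal number of vertices of $\mathcal{T}_\q$ in a single $\Gamma$-orbit that lie in $\pr_\q(\Gamma).v$ — and the latter is finite because $\Gamma \backslash \mathcal{T}_\q$ is a finite set (it is the image of the finite set $\Gamma \backslash (\mathcal{T}_\Gamma)^{(0)} \times (\mathcal{T}_\q)^{(0)}$). I would spell this counting out with a clean double-coset bijection rather than leave it at the level of "it is easy to see."
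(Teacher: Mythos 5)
There is a genuine gap at the crux of your argument, and it is not one that a cleaner double-coset count would fill. You assert that ``$\Gamma$ acts on $\mathcal{T}_\Gamma\times\mathcal{T}_\q$ with finite quotient (normality of $\Gamma$),'' and later that ``$\Gamma\backslash\mathcal{T}_\q$ is a finite set (it is the image of the finite set $\Gamma\backslash(\mathcal{T}_\Gamma)^{(0)}\times(\mathcal{T}_\q)^{(0)}$).'' Normality only gives that $\Gamma\backslash\mathcal{T}_\Gamma$ is finite; it says nothing about the product with the full Bruhat--Tits tree $\mathcal{T}_\q$, and that larger quotient is infinite in general. Take for instance $\Gamma=\Gamma_1\times\{1\}\subseteq\PGL_2(F_\p)\times\PGL_2(F_\q)$ with $\Gamma_1$ Schottky: this is a normal plectic subgroup with $\mbf{S}_\Gamma=\{\p\}$, but $\pr_\q(\Gamma)$ is trivial, so $\Gamma\backslash(\mathcal{T}_\Gamma\times\mathcal{T}_\q)\cong(\Gamma_1\backslash\mathcal{T}_{\mathcal{L}_{\Gamma_1}})\times\mathcal{T}_\q$ and $\Gamma\backslash\mathcal{T}_\q\cong\mathcal{T}_\q$ are both infinite. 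The slab argument as you set it up therefore cannot get started, even though in this example the conclusion of the lemma is immediate because $\widetilde{\Gamma}_U=\Gamma$.

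The framing you have before this point is sound and tracks the paper's one-line proof: Proposition~\ref{induction} gives $\mathcal{T}_{\Gamma_U}=\mathcal{T}_\Gamma$, and commensurability (Lemma~\ref{finiteindex}) lets you shrink $U$ to a vertex stabilizer $K_{S\setminus S_1}$, which is exactly the paper's reduction. What is then still needed --- and what both you and the paper's ``for which the claim is obvious'' elide --- is that $\widetilde{\Gamma}_U$ has finite index in $\Gamma$, i.e.\ that $\pr_{S\setminus S_1}(\Gamma)$ has a finite orbit on the standard vertex of $\prod_{\q\in S\setminus S_1}\mathcal{T}_\q$. Once that is available the finiteness of $\Gamma_U\backslash\mathcal{T}_{\Gamma_U}$ drops out with no combinatorics at all: a finite-index subgroup of $\Gamma$ still acts on $\mathcal{T}_\Gamma$ with finite quotient, and Lemma~\ref{finitekernel} transfers this from $\widetilde{\Gamma}_U$ to $\Gamma_U$. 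This finite-index statement is precisely the point where the hypothesis $\mbf{S}_\Gamma\subseteq S_1$ must do real work (via the plectic structure of the limit set, not via cocompactness of the $\Gamma$-action on an enlarged product of trees), and it is the ingredient your proof proposal does not supply.
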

\begin{proof}
The compact open subgroup $U$ is commensurable with the stabilizer of every vertex in the simplicial complex $\prod_{\p\in S\setminus S_1} \mathcal{T}_{\mathcal{L}_{\Gamma,\p}}$.
Thus, we may reduce to the case that $U$ is equal to such a stabilizer, for which the claim is obvious.
\end{proof}

%%%%%%%%%%%%%%%%%%%%%%%%%%%%%%%%%%%%%%%%%%%%%%%%%%%%%%%%%%%%%%%%%%%%%%%%
% Plectic Jacobians
%%%%%%%%%%%%%%%%%%%%%%%%%%%%%%%%%%%%%%%%%%%%%%%%%%%%%%%%%%%%%%%%%%%%%%%%

\section{Plectic Jacobians}\label{Sec: Plectic Jacobians}
\subsection{Mumford varieties and zero cycles}
Let $(F_S,\iota)$ be an embedded local \'etale algebra and $\Gamma\subseteq \PGL_2(F_S)$ a plectic subgroup.
Since $\mathcal{L}_{\Gamma,\p}\subseteq \PP(\C)$ is a compact subset, its complement $\Omega_{\Gamma,\p}= \PP(\C)\setminus \mathcal{L}_{\Gamma,\p}$ and, thus, the product 
\[
\Omega_\Gamma := \prod_{\p \in S} \Omega_{\Gamma,\p}= \PP(\C_S)\setminus \mathcal{L}_\Gamma,
\] 
carry the structure of a rigid analytic variety over $\C$.
Under mild assumptions the quotient of $\Omega_\Gamma$ by the action of $\Gamma$ exists in the category of rigid analytic varieties.
Indeed, using methods as in Mumford's seminal paper \cite{Mumford} or as in \cite[Lemma 4]{Berkovich} one gets the following result:
\begin{theorem}\label{quotients}
Suppose that $\Gamma\subseteq \PGL_2(F_S)$ is a torsion-free plectic subgroup.
Then there exists a pair $(X_\Gamma, \pi_\Gamma)$, unique up to isomorphism, consisting of a smooth rigid analytic variety $X_\Gamma$ and a surjective \'etale morphism
\[
\pi_\Gamma\colon \Omega_\Gamma \too X_\Gamma
\]
such that
\begin{itemize}
\item[$\bfcdot$] $\pi_\Gamma \circ \gamma = \pi_\Gamma$ for all $\gamma\in\Gamma$ and,
\item[$\bfcdot$] if $x,y\in \Omega_\Gamma$ satisfy $\pi_\Gamma(x)=\pi_\Gamma(y)$, then $\gamma(x)=y$ for some $\gamma\in\Gamma$.
\end{itemize}
The rigid analytic variety $X_\Gamma$ is proper if and only if $\Gamma$ is normal.
\end{theorem}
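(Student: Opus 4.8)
The plan is to construct $X_\Gamma$ by descending to $\Gamma\backslash\Omega_\Gamma$ a $\Gamma$-stable admissible affinoid covering of $\Omega_\Gamma$ that comes from the reduction map onto the polysimplical complex $\mathcal{T}_\Gamma$, following Mumford's construction for curves \cite{Mumford} and its analytic reformulation \cite[Lemma 4]{Berkovich}; uniqueness of $(X_\Gamma,\pi_\Gamma)$ will then be formal. The first point to establish is that $\Gamma$ acts freely on $\Omega_\Gamma$. For each $\p\in S$ the reduction map $\Omega_{\Gamma,\p}\to\mathcal{T}_{\mathcal{L}_{\Gamma,\p}}$ of \cite[Section I.2]{GvdP} and \cite[Section 4.9]{FvdP} is $\pr_\p(\Gamma)$-equivariant, so these assemble into a $\Gamma$-equivariant map $\lambda\colon\Omega_\Gamma\to\mathcal{T}_\Gamma$. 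If $\gamma\in\Gamma$ fixes a point $x\in\Omega_\Gamma$, then $\gamma$ fixes $\lambda(x)$, hence stabilises the simplex of $\mathcal{T}_\Gamma$ supporting $\lambda(x)$; by Lemma \ref{finitesimplex} this stabiliser is finite, so $\gamma=1$ because $\Gamma$ is torsion-free. Applying the same reasoning to $\gamma^2$ when $\lambda(x)$ is an interior point of an edge that $\gamma$ flips also shows that $\Gamma$ acts freely on $\mathcal{T}_\Gamma$.

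Next I would introduce, as $\sigma$ ranges over the simplices of $\mathcal{T}_\Gamma$, the preimages $\lambda^{-1}(\mathrm{star}(\sigma))$ of the open stars; these form a $\Gamma$-stable admissible covering of $\Omega_\Gamma$ by affinoid subdomains, since in each factor they are the standard affinoids and open annuli attached to the vertices and edges of the local tree and admissibility of the product covering follows from the one-variable theory. As $\Gamma$ acts freely on $\mathcal{T}_\Gamma$, for each member $U$ of this covering the set $\{\gamma\in\Gamma:\gamma U\cap U\neq\emptyset\}$ is trivial, so $\Gamma$ acts freely and discontinuously on $\Omega_\Gamma$ relative to it. Gluing the charts $\pi_\Gamma(\lambda^{-1}(\mathrm{star}(\sigma)))$ over a set of representatives for the $\Gamma$-orbits of simplices then produces a smooth rigid analytic variety $X_\Gamma$ together with an \'etale surjection $\pi_\Gamma\colon\Omega_\Gamma\to X_\Gamma$ satisfying the two listed properties, exactly as in \cite{Mumford} or \cite[Lemma 4]{Berkovich}; smoothness is inherited through $\pi_\Gamma$, and the pair is unique up to unique isomorphism by the standard argument.

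For the equivalence in the last sentence, suppose first that $\Gamma$ is normal, so that $\Gamma\backslash\mathcal{T}_\Gamma$ is a finite complex. Then finitely many of the charts $\pi_\Gamma(\lambda^{-1}(\overline{\sigma}))$ attached to closed simplices cover $X_\Gamma$, so $X_\Gamma$ is quasi-compact, and the finite combinatorial gluing datum defines an admissible formal model $\mathfrak{X}_\Gamma$ of $X_\Gamma$ whose special fibre is a proper configuration of products of projective lines; hence $\mathfrak{X}_\Gamma$, and therefore $X_\Gamma$, is proper — this is the verbatim analogue of Mumford's argument in dimension one. Conversely, if $\Gamma$ is not normal, then $\Gamma\backslash\mathcal{T}_\Gamma$ is an infinite, locally finite complex and therefore has infinitely many vertices. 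The map $\lambda$ descends to $\overline{\lambda}\colon X_\Gamma\to\Gamma\backslash\mathcal{T}_\Gamma$, and the preimages under $\overline{\lambda}$ of the open stars of the vertices form an admissible covering of $X_\Gamma$ in which a point mapping to a vertex $\overline{v}$ lies only in the member indexed by $\overline{v}$ itself; so no finite subfamily covers $X_\Gamma$, whence $X_\Gamma$ is not quasi-compact and a fortiori not proper.

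I expect the main obstacle to be the reduction theory underpinning the middle paragraph: showing that the product covering $\{\lambda^{-1}(\mathrm{star}(\sigma))\}$ is admissible and descends to an admissible covering of $X_\Gamma$, and, in the normal case, upgrading the finite combinatorial gluing datum to an honest proper formal model of $X_\Gamma$. All of this is classical when $|S|=1$ by the work of Mumford \cite{Mumford} and Gerritzen--van der Put \cite{GvdP}; the only new content should be checking that nothing beyond the Künneth-type bookkeeping already used to build $\mathcal{T}_\Gamma$ and compute its compactly supported cohomology is needed in the polysimplical setting.
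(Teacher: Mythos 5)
Your proposal follows the same approach as the paper, which in fact gives no detailed proof of this theorem but simply cites Mumford's method and Berkovich's Lemma 4 — precisely the route you have fleshed out (free action via the reduction map and Lemma \ref{finitesimplex}, $\Gamma$-equivariant affinoid covering from $\lambda\colon\Omega_\Gamma\to\mathcal{T}_\Gamma$, gluing, and (non-)quasicompactness of $\Gamma\backslash\mathcal{T}_\Gamma$ for the properness equivalence). One small imprecision worth flagging: for the covering by $U_\sigma=\lambda^{-1}(\mathrm{star}(\sigma))$, the set $\{\gamma\in\Gamma:\gamma U_\sigma\cap U_\sigma\neq\emptyset\}$ need not be trivial even though $\Gamma$ acts freely on $\mathcal{T}_\Gamma$ — if $\gamma$ translates a vertex $v$ to an adjacent vertex $w$, the open stars of $v$ and $w$ share the interior of the connecting edge, so $\gamma U_\sigma$ meets $U_\sigma$ with $\gamma\neq 1$. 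This is repaired by the standard device of passing to a barycentric subdivision of $\mathcal{T}_\Gamma$ (or equivalently by shrinking to the pointwise discontinuity neighbourhoods that Berkovich's Lemma 4 actually requires), after which the rest of your construction and the properness argument go through as written.
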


\begin{remark}\label{choices}
One can realize a rigid analytic variety as a quotient as above in many different ways.
For starters, one can enlarge $F_S$ by choosing a finite extension of each $F_\p$ together with an embedding to $\C$ that extends $F_\p$.
One can also conjugate $\Gamma$ by elements of $\PGL_2(F_S)$ and switch factors in case $F_\p\cong F_\q$ for $\p,\q \in S$.
It is not clear whether there are additional ways to change $\Gamma$ (see \cite{Alon} for a discussion of this question in the case of cocompact subgroups).
\end{remark}
To keep track on the data involved in the uniformization we make the following definition.

\begin{definition}\label{def: Mum}
Let $(F_S,\iota)$ be an embedded local \'etale algebra.
A \emph{connected Mumford variety} (over $(F_S,\iota)$) is a triple $(\Gamma,X_\Gamma,\pi_\Gamma)$ where $\Gamma\subseteq \PGL_2(F_S)$ is a torsion-free normal plectic subgroup and $(X_\Gamma,\pi)$ arises from Theorem \ref{quotients}. In particular, $X_\Gamma$ is proper.
A \emph{Mumford variety} (over $(F_S,\iota)$) is a finite tuple of connected Mumford varieties (over $(F_S,\iota)$). 
\end{definition}
\begin{remark}
    We require the plectic groups involved in the definition of Mumford varieties to be normal so that Corollary \ref{fingen} holds. This will allow us to define plectic Jacobians in Definition \ref{Def: plectic jacobians}.  
\end{remark}
\noindent We often suppress $\Gamma$ and $\pi_\Gamma$ from the notation and simply call $X_\Gamma$ a (connected) Mumford variety.
Given a rigid analytic variety $X$ over $\C$ we write $\mathcal{Z}(X)$ for the group of $0$-cycles on $X$, that is, the free abelian group on the set $X(\C)$.
Let
\[
\deg\colon \mathcal{Z}(X)\too \Z,\quad \sum_{x\in X(\C)} n_x [x] \mapsto \sum_{x\in X(\C)} n_x
\]
denote the degree map, and $\mathcal{Z}_0(X)=\ker(\deg)$ the group of $0$-cycles of degree zero.
If $X_\Gamma$ is a connected Mumford variety, the projection $(\pi_\Gamma)_\ast\colon \mathcal{Z}(\Omega_\Gamma) \to \mathcal{Z}(X_\Gamma)$ induces an isomorphism
\begin{equation}\label{coinvariants}
\mathcal{Z}(\Omega_\Gamma)_\Gamma:=\mrm{H}_0\big(\Gamma,\mathcal{Z}(\Omega_\Gamma)\big)\xlongrightarrow{\sim} \mathcal{Z}(X_\Gamma).
\end{equation}
Moreover, there is a natural $\Gamma$-equivariant isomorphism
\[
\psi\colon \bigotimes_{\p\in S} \mathcal{Z}(\Omega_{\Gamma,\p}) \xlongrightarrow{\sim} \mathcal{Z}(\Omega_{\Gamma})
\]
sending elementary tensors $\otimes_{\p\in S} [x_\p]$ to $S$-tuples $[(x_\p)_{\p\in S}]$.
For any plectic subgroup $\Gamma$ define the $\Gamma$-submodule $\mathcal{Z}_{\plectic}(\Omega_{\Gamma})\subseteq \mathcal{Z}(\Omega_{\Gamma})$ of $0$-cycles of degree $(0,\ldots,0)$ as the image of the tensor product $\bigotimes_{\p\in S} \mathcal{Z}_0(\Omega_{\Gamma,\p})$ under $\psi$.
Thus, taking $\Gamma$-coinvariants and invoking \eqref{coinvariants} yields a homomorphism
\[
\Theta_\Gamma\colon \mathcal{Z}_{\plectic}(\Omega_{\Gamma})_\Gamma \too \mathcal{Z}(X_{\Gamma})
\]
for every Mumford variety $X_\Gamma$.

\begin{definition}
Let $X_\Gamma$ be a connected Mumford variety.
The group of $0$-cycles of degree $(0,\ldots,0)$ of $X_\Gamma$ is defined as
\[
\mathcal{Z}_{\plectic}(X_{\Gamma}):= \mrm{Im}(\Theta_\Gamma)\subseteq \mathcal{Z}(X_{\Gamma}).
\]
Given a Mumford variety $X=X_{\Gamma_1}\coprod\ldots\coprod X_{\Gamma_n}$ we put
\[
\mathcal{Z}_{\plectic}(X):=\bigoplus_{i=1}^{n}\mathcal{Z}_{\plectic}(X_{\Gamma_i}).
\]
 \end{definition}

\begin{remark}
The group $\mathcal{Z}(X_{\Gamma})$ clearly only depends on the rigid analytic variety $X_\Gamma$ itself, whereas the subgroup
$\mathcal{Z}_{\plectic}(X_{\Gamma})$ a priori depends on chosen uniformization of $X_\Gamma$.
Nevertheless, note that changing $\Gamma$ by conjugation or switching factors as explained in Remark \ref{choices} preserves the subgroup of $0$-cycles of degree $(0,\ldots,0)$.
Whenever $X_\Gamma$ is one-dimensional, it is clear that $\mathcal{Z}_{\plectic}(X_{\Gamma})$ recovers the group of divisors of degree $0$.
\end{remark}

\noindent Let $\Aut(\C/F_\p)$ be the group of continuous automorphisms of $\C$ fixing $\iota_\p(F_\p)$.
The product
\[
\Aut(\C/F_S)=\prod_{\p\in S} \Aut(\C/F_\p).
\]
naturally acts on the product $\PP(\C_S)$ of projective spaces and commutes with the action of $\PGL_2(F_S)$.
Thus, for every plectic subgroup $\Gamma\subset \PGL_2(F_S) $ the rigid analytic variety $\Omega_\Gamma$ is stable under $\Aut(\C/F_S)$-action.
Moreover, if $\Gamma$ is torsion-free, the action descends to an action on $X_\Gamma$ and thus on $\mathcal{Z}(X_\Gamma)$.
For every Mumford variety $X$ the subgroup $\mathcal{Z}_\plectic(X)\subseteq \mathcal{Z}(X)$ is a $\Aut(\C/F_S)$-submodule

\smallskip
\noindent Let $(F_{S'}',\iota')$ be an embedded local \'etale algebra of the same characteristic and residue characteristic as $(F_S,\iota)$.
Given Mumford varieties $X$ and $X'$ over $(F_S,\iota)$ and  $(F'_{S'},\iota')$  respectively, the product $X\times X'$ has the structure of a Mumford variety over $(F_S\times F_{S'}', \iota\times\iota')$.
Moreover, restricting the isomorphism
\begin{align*}
\mathcal{Z}(X)\otimes \mathcal{Z}(X')&\too \mathcal{Z}(X\times X')\\
x\otimes x' &\mapstoo (x,x')
\end{align*}
to $0$-cycles of degree $(0,\ldots,0)$ yields an isomorphism
\begin{align}\label{monoidal1}
\mathcal{Z}_{\plectic}(X)\otimes \mathcal{Z}_{\plectic}(X')\too \mathcal{Z}_{\plectic}(X\times X')
\end{align}
of $\Aut(\C/F_S\times F_{S'}')$-modules.

\subsection{Integration theory}
Let $\mathcal{L}$ be a compact, totally disconnected space and $A$ an abelian group.
We write $\mathscr{M}(\mathcal{L},A)$ for the space of $A$-valued measures on $\mathcal{L}$, i.e.,
\[
\mathscr{M}(\mathcal{L},A):=\Hom_{\Z}(C(\mathcal{L},\Z),A).
\]
In order to have a well-behaved integration theory, we require $A$ to be a finite free $\Z$-module. Then, if $M$ is a second-countable prodiscrete abelian group, there exists a canonical integration paring
\begin{align}\label{integration1}
\mathscr{M}(\mathcal{L},A)\otimes C(\mathcal{L},M)\too A\otimes_{\Z} M
\end{align}
constructed as follows:
first, note that for any discrete abelian group $N$ the canonical map
\[
C(\mathcal{L},\Z)\otimes N \xlongrightarrow{\sim} C(\mathcal{L},N)
\]
is an isomorphism and, thus, there exists a canonical integration pairing
\[
\mathscr{M}(\mathcal{L},A)\otimes C(\mathcal{L},N)\too A\otimes N
\]
that is functorial in $N$.
For $M$ second-countable prodiscrete, choose a basis of neighbourhoods $\left\{U_i\right\}_i$ of the identity of $M$ consisting of open subgroups with $U_{i+1}\subseteq U_i$.
Since  the canonical map $C(\mathcal{L},M)\xrightarrow{\sim} \varprojlim_{i}C(\mathcal{L},M/U_i)$ is an isomorphism, and $A$ is a finite free $\Z$-module, we obtain the sought after pairing by considering
\[
\mathscr{M}(\mathcal{L},A)\otimes \varprojlim_{i}C(\mathcal{L},M/U_i)\too \varprojlim_{i} A\otimes M/U_i=M.
\]
\begin{remark}
If $\mathcal{L}$ can be written as a product
\[
\mathcal{L}=\mathcal{L}_1\times \mathcal{L}_2
\]
of profinite spaces.
Then the map
\begin{align}
\begin{split}\label{products}
C(\mathcal{L}_1,\Z) \otimes_\Z C(\mathcal{L}_2,\Z) &\too C(\mathcal{L}_1\times \mathcal{L}_2,\Z)\\
f_1\otimes f_2 &\mapstoo [(x_1,x_2)\mapsto f_1(x_1)\cdot f_2(x_2))]
\end{split}
\end{align}
is an isomorphism of abelian groups.
\end{remark}
\smallskip
\noindent Now, let us turn to the situation of a plectic subgroup $\Gamma\subseteq \PGL_2(F_S)$ for some embedded local \'etale algebra $F_S$.
The group $\Gamma$ acts on $\mathcal{L}_{\Gamma,S}$ and, thus, on $\scr{M}(\mathcal{L}_{\Gamma,S},A)$.
By \eqref{products} the Steinberg module $\St_{\Gamma,S}$ is a quotient of the function space
\[
C(\mathcal{L}_{\Gamma,S},\Z)\cong \bigotimes_{\p\in S} C(\mathcal{L}_{\Gamma,\p},\Z).
\]
Thus, for any abelian group $A$ the group
\[
\scr{M}_{\plectic}(\mathcal{L}_{\Gamma,S},A):=\Hom_\Z(\St_{\Gamma,S},A)
\]
is a $\Gamma$-submodule of the space of $A$-valued measures on $\mathcal{L}_{\Gamma,S}$.
More precisely, it is the space of measures vanishing on functions which are constant in at least one variable.
For any second-countable prodiscrete abelian group $M$,
the $\Gamma$-module $\St_{\Gamma,S}^{\mrm{ct}}(M)$ is defined to be the quotient of $C(\mathcal{L}_{\Gamma,S},M)$ by the submodule generated by those functions  which are constant with respect to at least one variable. 
If $A$ is a finite free $\Z$-module,
the integration pairing \eqref{integration1} descends to a pairing
\begin{align}\label{integration2}
\langle\cdot, \cdot\rangle\colon \scr{M}_{\plectic}(\mathcal{L}_{\Gamma,S},A) \times \St_{\Gamma,S}^{\mrm{ct}}(M) \too A\otimes_\Z M.
\end{align}

\subsection{Universal measures and multiplicative integrals}
Let us fix a plectic subgroup $\Gamma\subseteq \PGL_2(F_S)$ and consider the maximal torsion-free quotient $H_{\Gamma,S}$ of the group of $\Gamma$-coinvariants $\left(\St_{\Gamma,S}\right)_\Gamma$ of $\St_{\Gamma,S}$.
By definition, there is a canonical identification
\begin{equation}\label{univmeasure}
\scr{M}_{\plectic}(\mathcal{L}_{\Gamma,S},A)^{\Gamma}=\Hom_\Z(H_{\Gamma,S},A)
\end{equation}
for every torsion-free abelian group $A$.
\begin{definition}
The \emph{universal $\Gamma$-invariant measure} on $\mathcal{L}_{\Gamma,S}$ is the measure  
\[
\mu_\Gamma^{S}\in \scr{M}_{\plectic}(\mathcal{L}_{\Gamma,S},H_{\Gamma,S})^\Gamma
\]
corresponding under \eqref{univmeasure} to the identity on $H_{\Gamma,S}$.
\end{definition}

The next lemma is a direct consequence of the definitions.
\begin{lemma}
Let $S_1,S_2\subseteq S$ be disjoint subsets and $\Gamma_{i}\subseteq \PGL_2(F_{S_i})$, $i=1,2$, plectic subgroups.
There is a natural identification
\begin{align}\label{productdecomp}
H_{\Gamma_1\times\Gamma_2,S_1\cup S_2}=H_{\Gamma_1,S_1}\otimes H_{\Gamma_2,S_2}
\end{align}
of abelian groups.
Moreover, the universal measure of $\Gamma_1\times \Gamma_2$ naturally factors as
\begin{align}\label{fubini}
\mu_{\Gamma_1\times\Gamma_2}^{S_1\cup S_2}= \mu_{\Gamma_1}^{S_1} \otimes \mu_{\Gamma_2}^{S_2}.
\end{align}
\end{lemma}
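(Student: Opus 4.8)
The plan is to unwind the definitions on both sides and check that the two natural identifications are compatible. Throughout I write $S=S_1\cup S_2$, $\Gamma=\Gamma_1\times\Gamma_2$, and use Lemma \ref{product}, which tells us that $\Gamma$ is plectic with $\mathcal{L}_{\Gamma,\p}=\mathcal{L}_{\Gamma_i,\p}$ for $\p\in S_i$, so that $\mathcal{L}_{\Gamma,S}=\mathcal{L}_{\Gamma_1,S_1}\times\mathcal{L}_{\Gamma_2,S_2}$.

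First I would establish \eqref{productdecomp}. Unravelling the definition, $\St_{\Gamma,S}=\bigotimes_{\p\in S}\St_{\Gamma,\p}=\big(\bigotimes_{\p\in S_1}\St_{\Gamma_1,\p}\big)\otimes\big(\bigotimes_{\p\in S_2}\St_{\Gamma_2,\p}\big)=\St_{\Gamma_1,S_1}\otimes_\Z\St_{\Gamma_2,S_2}$ as abelian groups, and this identification is plainly $\Gamma_1\times\Gamma_2$-equivariant for the componentwise action. Taking $\Gamma$-coinvariants of a tensor product of modules over a product of groups gives the tensor product of the coinvariants: $\big(\St_{\Gamma_1,S_1}\otimes\St_{\Gamma_2,S_2}\big)_{\Gamma_1\times\Gamma_2}\cong(\St_{\Gamma_1,S_1})_{\Gamma_1}\otimes(\St_{\Gamma_2,S_2})_{\Gamma_2}$; this is the standard fact $\mrm{H}_0(G\times H, M\otimes N)\cong \mrm{H}_0(G,M)\otimes \mrm{H}_0(H,N)$, which follows from right-exactness of coinvariants and the decomposition of the augmentation ideal of $\Z[G\times H]$. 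Passing to maximal torsion-free quotients requires one small argument: for finitely generated abelian groups $M,N$ one has $(M/M_{\mrm{tors}})\otimes(N/N_{\mrm{tors}})\cong (M\otimes N)/(M\otimes N)_{\mrm{tors}}$, and Corollary \ref{fingen} guarantees finite generation whenever $S=\mbf{S}_\Gamma$ (otherwise everything vanishes and the statement is trivial), so $H_{\Gamma,S}\cong H_{\Gamma_1,S_1}\otimes H_{\Gamma_2,S_2}$.

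Next I would verify \eqref{fubini}. By \eqref{univmeasure}, $\mu_\Gamma^S\in\scr{M}_{\plectic}(\mathcal{L}_{\Gamma,S},H_{\Gamma,S})^\Gamma=\Hom_\Z(H_{\Gamma,S},H_{\Gamma,S})$ corresponds to the identity map, and under the identification of the previous paragraph $\mathrm{id}_{H_{\Gamma,S}}$ corresponds to $\mathrm{id}_{H_{\Gamma_1,S_1}}\otimes\,\mathrm{id}_{H_{\Gamma_2,S_2}}$. On the other hand, the external product $\mu_{\Gamma_1}^{S_1}\otimes\mu_{\Gamma_2}^{S_2}$ is the measure in $\Hom_\Z(\St_{\Gamma_1,S_1}\otimes\St_{\Gamma_2,S_2},\, H_{\Gamma_1,S_1}\otimes H_{\Gamma_2,S_2})$ obtained by tensoring the two homomorphisms $\St_{\Gamma_i,S_i}\to H_{\Gamma_i,S_i}$ defining $\mu_{\Gamma_i}^{S_i}$; each of those is the canonical projection to the maximal torsion-free quotient of the coinvariants, so their tensor product is exactly the canonical projection $\St_{\Gamma,S}\to H_{\Gamma,S}$, i.e.\ the measure corresponding to $\mathrm{id}_{H_{\Gamma,S}}$. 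Hence the two sides of \eqref{fubini} agree. The only place that needs any care — and the step I would flag as the main obstacle — is the compatibility of the passage to maximal torsion-free quotients with the tensor decomposition of coinvariants; this is why the finite generation provided by Corollary \ref{fingen} is invoked, and once that is in hand the rest is purely formal.
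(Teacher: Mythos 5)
Your argument unwinds the definitions in the only reasonable way, and this is in effect what the paper intends when it calls the lemma ``a direct consequence of the definitions'' and offers no proof. Both halves are structured correctly: the identification $\St_{\Gamma,S}\cong\St_{\Gamma_1,S_1}\otimes\St_{\Gamma_2,S_2}$ is $\Gamma_1\times\Gamma_2$-equivariant because $\Gamma_i$ projects trivially to $\PGL_2(F_\p)$ for $\p\notin S_i$, coinvariants of a product group on an external tensor factor as you say, and the universal measure really is just the canonical surjection $\St_{\Gamma,S}\twoheadrightarrow H_{\Gamma,S}$, so the Fubini identity reduces to the compatibility established in the first part.

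There is, however, one misstep you should repair. You invoke Corollary \ref{fingen} to obtain finite generation of the coinvariant modules, but that corollary requires $\Gamma$ to be a \emph{normal} plectic subgroup, whereas the lemma only assumes the $\Gamma_i$ are plectic. So the hypothesis you lean on is not available. Fortunately, the step you flagged as the main obstacle is not an obstacle at all: the identity
\[
(M\otimes N)/(M\otimes N)_{\mathrm{tors}}\ \cong\ (M/M_{\mathrm{tors}})\otimes(N/N_{\mathrm{tors}})
\]
holds for \emph{arbitrary} abelian groups $M,N$, with no finiteness assumption. Indeed, by right-exactness of $\otimes$ the right-hand side is $(M\otimes N)/K$ where $K$ is the subgroup generated by the images of $M_{\mathrm{tors}}\otimes N$ and $M\otimes N_{\mathrm{tors}}$; these images are torsion, so $K\subseteq(M\otimes N)_{\mathrm{tors}}$; and since $(M/M_{\mathrm{tors}})\otimes(N/N_{\mathrm{tors}})$ is a tensor product of torsion-free abelian groups it is itself torsion-free (a torsion-free abelian group is a directed union of finitely generated free groups), forcing $(M\otimes N)_{\mathrm{tors}}\subseteq K$. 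Replacing your appeal to Corollary \ref{fingen} by this general argument closes the gap and gives a proof valid under the stated hypotheses.
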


\smallskip
Given second-countable prodiscrete subgroups $M_{j}$, $i=1,\ldots,k$, we consider the topology on $M_1 \otimes_\Z \ldots\otimes_\Z M_k$ generated by opens of the form
\[
M_{1}\otimes_\Z\ldots\otimes_\Z M_{\ell-1}\otimes_\Z U_\ell \otimes_\Z M_{\ell+1} \otimes \ldots \otimes_\Z M_k
\]
with $1\leq \ell \leq k$ and $U_\ell\subseteq M_{\ell}$ an open subgroup.
Its completion $M_1 \widehat{\otimes} \ldots\widehat{\otimes} M_k$ is a second-countable prodiscrete group.
We are mostly interested in the case \[
\C^\times_{S,\otimes}:=\widehat{\otimes}_{\p\in S}\C^\times.
\]
As the componentwise action of $\Aut(\C/F_S)$ on $\otimes_{\p\in S}\C^\times$ is continuous, it induces an action on the completion $\C_{S,\otimes}^\times.$
\begin{lemma}
	The map
	\begin{align*}
	\Phi_{S} \colon \mathcal{Z}_{\plectic}(\Omega_{\Gamma})&\too\St_{\Gamma,S}^{\mrm{ct}}(\C_{S,\otimes}^\times),\\
	\otimes_{\p\in S}([x_\p]-[y_\p])&\mapstoo\left[(t_\p)_{\p\in S}\mapsto\bigotimes_{\p\in S}\left(\frac{t_\p-x_\p}{t_\p-y_\p}\right)\right]
	\end{align*}
	is a well-defined $\Gamma\times\Aut(\C/F_{S})$-equivariant homomorphism.
\end{lemma}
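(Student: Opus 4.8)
The plan is to verify three things in turn: that $\Phi_S$ is well-defined (i.e.\ lands in the quotient $\St_{\Gamma,S}^{\mrm{ct}}(\C_{S,\otimes}^\times)$ and does not depend on the way an element of $\mathcal{Z}_{\plectic}(\Omega_\Gamma)$ is written as a sum of elementary tensors), that it is $\Gamma$-equivariant, and that it is $\Aut(\C/F_S)$-equivariant. The starting point is the isomorphism $\psi\colon\bigotimes_{\p\in S}\mathcal{Z}(\Omega_{\Gamma,\p})\xrightarrow{\sim}\mathcal{Z}(\Omega_\Gamma)$ from the previous subsection, under which $\mathcal{Z}_{\plectic}(\Omega_\Gamma)$ is by definition the image of $\bigotimes_{\p\in S}\mathcal{Z}_0(\Omega_{\Gamma,\p})$. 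So it suffices to build, for each $\p\in S$, a homomorphism
\[
\phi_\p\colon \mathcal{Z}_0(\Omega_{\Gamma,\p})\too C(\mathcal{L}_{\Gamma,\p},\C^\times),\qquad [x_\p]-[y_\p]\mapstoo\Big[t_\p\mapsto \tfrac{t_\p-x_\p}{t_\p-y_\p}\Big],
\]
check these are well-defined, and then tensor them together and compose with the natural maps to the plectic quotients. For $\phi_\p$: the assignment $[x]-[y]\mapsto(t\mapsto (t-x)/(t-y))$ extends to a homomorphism on all of $\mathcal{Z}_0(\Omega_{\Gamma,\p})$ because $\mathcal{Z}_0$ is generated by differences $[x]-[y]$ subject to the relations $([x]-[y])+([y]-[z])=[x]-[z]$, and the cross-ratio-type functions multiply correctly: $\tfrac{t-x}{t-y}\cdot\tfrac{t-y}{t-z}=\tfrac{t-x}{t-z}$. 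One also checks the target is correct: for $x_\p,y_\p\in\Omega_{\Gamma,\p}=\PP(\C)\setminus\mathcal{L}_{\Gamma,\p}$ the function $t\mapsto (t-x_\p)/(t-y_\p)$ has no zero or pole on $\mathcal{L}_{\Gamma,\p}$, hence is a continuous (indeed locally constant, since $\mathcal{L}_{\Gamma,\p}$ is totally disconnected and the function is rigid-analytic) $\C^\times$-valued function on $\mathcal{L}_{\Gamma,\p}$; the usual convention handles the point $\infty$.

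Next I would tensor: $\bigotimes_\p\phi_\p$ gives a map $\bigotimes_\p\mathcal{Z}_0(\Omega_{\Gamma,\p})\to\bigotimes_\p C(\mathcal{L}_{\Gamma,\p},\C^\times)$, and composing with the canonical map into $C(\mathcal{L}_{\Gamma,S},\C_{S,\otimes}^\times)$ (using the product decomposition \eqref{products} of $C(\mathcal{L}_{\Gamma,S},\Z)$ and the completed tensor product $\C_{S,\otimes}^\times$) and then with the projection onto the plectic quotient $\St_{\Gamma,S}^{\mrm{ct}}(\C_{S,\otimes}^\times)$ yields $\Phi_S$. Since each $\phi_\p$ is a genuine homomorphism on $\mathcal{Z}_0(\Omega_{\Gamma,\p})$, the tensor product is well-defined on $\bigotimes_\p\mathcal{Z}_0(\Omega_{\Gamma,\p})$, and restricting along $\psi$ to $\mathcal{Z}_{\plectic}(\Omega_\Gamma)$ makes $\Phi_S$ unambiguous; passing to the quotient $\St_{\Gamma,S}^{\mrm{ct}}$ is automatic. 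For $\Gamma$-equivariance, $\Gamma$ acts through $\pr_\p(\Gamma)\subseteq\PGL_2(F_\p)$ on each factor, so it is enough to observe that for $g=\begin{pmatrix}a&b\\c&d\end{pmatrix}\in\GL_2(F_\p)$ one has $\tfrac{gt-gx}{gt-gy}=\tfrac{(ct+d)(cy+d)}{(cx+d)(cy+d)}\cdot\tfrac{t-x}{t-y}\cdot\tfrac{?}{?}$ --- more precisely the Möbius transformation identity $gt-gx=\tfrac{(ad-bc)(t-x)}{(ct+d)(cx+d)}$, so that $\tfrac{gt-gx}{gt-gy}=\tfrac{(cy+d)}{(cx+d)}\cdot\tfrac{t-x}{t-y}$; the spurious factor $(cy+d)/(cx+d)$ is constant in $t_\p$, hence dies in the plectic quotient $\St_{\Gamma,\p}$ (functions constant in one variable are killed). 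This is exactly the point where the passage to $\St_{\Gamma,S}^{\mrm{ct}}$ rather than all of $C(\mathcal{L}_{\Gamma,S},\C_{S,\otimes}^\times)$ is essential. $\Aut(\C/F_S)$-equivariance is formal: an automorphism $\sigma_\p$ fixing $\iota_\p(F_\p)$ sends $t\mapsto (t-x)/(t-y)$ to $t\mapsto(t-\sigma_\p x)/(t-\sigma_\p y)$ because the coefficients of the cross ratio are in the prime field, and it commutes with all the tensor and quotient operations by construction of the action on $\C_{S,\otimes}^\times$.

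The main obstacle is bookkeeping rather than mathematics: one must be careful that the "constant in one variable" factors picked up under the $\PGL_2(F_\p)$-action really are killed in $\St_{\Gamma,S}^{\mrm{ct}}(M)$ --- this requires using that $\St_{\Gamma,S}^{\mrm{ct}}(M)$ is defined as the quotient of $C(\mathcal{L}_{\Gamma,S},M)$ by the \emph{submodule generated by} functions constant in at least one variable, so that a product of a one-variable-constant function with an arbitrary function is again in that submodule (one needs that the submodule is an ideal-like object for the pointwise product valued in $M=\C_{S,\otimes}^\times$, which holds because multiplying a function valued in a fixed tensor slot by a scalar-in-that-slot-constant function stays in the generated submodule). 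A secondary subtlety is continuity: one should confirm the resulting map into the completed tensor product $\C_{S,\otimes}^\times$ lands in genuinely continuous functions, which follows because each $\phi_\p([x_\p]-[y_\p])$ is locally constant with finitely many values on the compact $\mathcal{L}_{\Gamma,\p}$, so the tensored function takes values in a finitely generated, hence closed, subgroup, and no completion issues arise on elementary tensors; one then extends by linearity. Modulo these checks the lemma follows from the explicit formulae.
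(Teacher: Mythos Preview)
Your approach is essentially the paper's, only spelled out in greater detail: the paper simply notes that each factor $t_\p\mapsto (t_\p-x_\p)/(t_\p-y_\p)$ is a continuous $\C^\times$-valued function on $\mathcal{L}_{\Gamma,\p}$, declares $\Aut(\C/F_S)$-equivariance obvious, and attributes $\Gamma$-equivariance to the passage to the Steinberg quotient. Your factor-by-factor construction via $\phi_\p$, the Möbius identity producing a $t_\p$-constant correction, and the observation that such corrections die in $\St_{\Gamma,S}^{\mrm{ct}}$ are exactly the content behind the paper's terse sentence.

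One point to correct: you assert that $t_\p\mapsto (t_\p-x_\p)/(t_\p-y_\p)$ is \emph{locally constant} on $\mathcal{L}_{\Gamma,\p}$ because the latter is totally disconnected. That is false in general (e.g.\ when $\mathcal{L}_{\Gamma,\p}=\PP(F_\p)$, this is a non-constant rational function restricted to an infinite compact set; think of the identity map on $\Z_p$). Total disconnectedness does not force continuous maps to be locally constant. Fortunately you do not need local constancy anywhere: the target $\St_{\Gamma,S}^{\mrm{ct}}(M)$ is built from \emph{continuous} $M$-valued functions, and continuity of the tensored map into $\C_{S,\otimes}^\times$ follows directly from continuity of each $\phi_\p$ together with continuity of the multilinear map $\prod_\p\C^\times\to\widehat{\otimes}_\p\C^\times$ (which is immediate from the definition of the tensor-product topology). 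So drop the local-constancy claim and the argument goes through.
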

\begin{proof}
Note that the function 
	\[
	\left(\frac{(\cdot)-x_\p}{(\cdot)-y_\p}\right)\colon\mathcal{L}_{\Gamma,\p}\too\C^\times
	\]
	is continuous for every $\p\in S$.
	Thus $\Phi_{S}$ is well-defined.
	The equivariance with respect to the $\Aut(\C/F_{S})$-action is obvious, while the equivariance with respect to the $\Gamma$-action holds because we are considering the continuous $S$-Steinberg module.
\end{proof}

Let $A$ be a finite free $\Z$-module and $\mu\in \scr{M}_{\plectic}(\mathcal{L}_{\Gamma,S},A)$ a measure.
For a $0$-cycle $D\in \mathcal{Z}_{\plectic}(\Omega_{\Gamma})$ of degree $(0,\ldots,0)$ the multiplicative integral of $D$ with respect to $\mu$ is defined to be
\[
\mint_{D}\omega_\mu:=\langle \mu, \Phi_S(D) \rangle\ \in\ A\otimes_\Z \C_{S,\otimes}^{\times},
\]
where $\langle\cdot,\cdot\rangle$ is the integration pairing \eqref{integration2}.
The multiplicative integral is $\Aut(\C/F_S)$-equivariant, where $\Aut(\C/F_S)$ acts on $H_{\Gamma,S}\otimes_\Z \C_{S,\otimes}^{\times}$ through the second factor.

\begin{lemma}\label{dense}
Let $A$ be a finite free $\Z$-module and $\mu\in \scr{M}_{\plectic}(\mathcal{L}_{\Gamma,S},A)$ a surjective measure.
The image of the multiplicative integral
\[
\mint_{(-)}\omega_\mu\colon \mathcal{Z}_{\plectic}(\Omega_{\Gamma}) \too  A\otimes_\Z \C_{S,\otimes}^{\times}
\]
is dense.
\end{lemma}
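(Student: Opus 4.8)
The plan is to reduce density of the image of $\mint_{(-)}\omega_\mu$ to a statement about the map $\Phi_S$ being ``essentially surjective'' after pairing with $\mu$. Fix a basis of open subgroups $\{U_i\}_i$ of $\C_{S,\otimes}^\times$ as in the construction of the integration pairing; then $A\otimes_\Z \C_{S,\otimes}^\times = \varprojlim_i A\otimes_\Z(\C_{S,\otimes}^\times/U_i)$, so it suffices to show that for each $i$ the composite
\[
\mathcal{Z}_{\plectic}(\Omega_\Gamma)\xrightarrow{\mint_{(-)}\omega_\mu} A\otimes_\Z\C_{S,\otimes}^\times \too A\otimes_\Z(\C_{S,\otimes}^\times/U_i)
\]
is surjective. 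Since $\mu$ is surjective onto $A$ (indeed $\scr{M}_\plectic(\mathcal{L}_{\Gamma,S},A)=\Hom_\Z(\St_{\Gamma,S},A)$, and surjectivity of $\mu$ means the corresponding map $\St_{\Gamma,S}\onto A$ is onto), and $A$ is finite free, it is enough to prove that the image of $\Phi_S$ in $\St_{\Gamma,S}^{\mrm{ct}}(\C_{S,\otimes}^\times)$, when further pushed to $\St_{\Gamma,S}^{\mrm{ct}}(\C_{S,\otimes}^\times/U_i)$, generates that finite module; equivalently, that $\Phi_S$ has dense image. So the whole statement reduces to: \emph{the image of $\Phi_S$ is dense in $\St_{\Gamma,S}^{\mrm{ct}}(\C_{S,\otimes}^\times)$.}

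To prove that, I would first handle the ``one-variable'' building block. For a single $\p$, the functions $t_\p\mapsto \frac{t_\p-x_\p}{t_\p-y_\p}$ with $x_\p,y_\p\in\Omega_{\Gamma,\p}$ are exactly the ``theta-like'' generators; a standard approximation argument (as in the classical theory of $p$-adic theta functions, cf.\ \cite{GvdP} Ch.\ II and \cite{FvdP}) shows that finite products of such functions are dense in the group of $\C^\times$-valued continuous functions on the compact set $\mathcal{L}_{\Gamma,\p}$ modulo constants --- more precisely, that their span maps onto $C(\mathcal{L}_{\Gamma,\p},\C^\times/V)/(\text{constants})$ for every open $V\subseteq\C^\times$. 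The key point is that the cross-ratio functions separate the clopen partitions of $\mathcal{L}_{\Gamma,\p}$: given any clopen decomposition one can choose $x_\p$ inside one piece and $y_\p$ outside to produce a locally constant function taking prescribed distinct values modulo $V$, since $\C^\times$ (being the units in a completed algebraically closed field) is divisible and the value group is dense. This gives density of the image of $\Phi_{\{\p\}}$ in $\St_{\Gamma,\p}^{\mrm{ct}}(\C^\times)$.

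For the general case I would tensor these together: by the Remark identity \eqref{products}, $C(\mathcal{L}_{\Gamma,S},\Z)\cong\bigotimes_{\p\in S}C(\mathcal{L}_{\Gamma,\p},\Z)$, and $\St_{\Gamma,S}$ is the quotient killing functions constant in some variable; correspondingly $\St_{\Gamma,S}^{\mrm{ct}}(\C_{S,\otimes}^\times)$ is a quotient of $C(\mathcal{L}_{\Gamma,S},\C_{S,\otimes}^\times)$, which is topologically generated by elementary tensors $\bigotimes_\p f_\p$ with $f_\p\in C(\mathcal{L}_{\Gamma,\p},\C^\times)$. An elementary tensor of cross-ratio functions is precisely $\Phi_S(\otimes_\p([x_\p]-[y_\p]))$, and by the one-variable step each $f_\p$ can be approximated modulo any open subgroup by a finite product of cross-ratios; bilinearity (more precisely multilinearity) of the tensor product, together with continuity of $\otimes$, then shows that finite sums of $\Phi_S$-values approximate $\bigotimes_\p f_\p$ modulo any basic open of $\C_{S,\otimes}^\times$. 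Hence the image of $\Phi_S$ is dense. Feeding this back through the reduction of the first paragraph — pairing with the surjective measure $\mu$ and using that $A\otimes_\Z(-)$ and $\varprojlim_i$ are exact on the relevant finite modules — yields density of the image of $\mint_{(-)}\omega_\mu$.

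The main obstacle I anticipate is the one-variable approximation step done \emph{uniformly} and \emph{modulo open subgroups of $\C^\times$}: one must be careful that the multiplicative (as opposed to additive) structure does not obstruct hitting arbitrary locally constant functions, and that passing to the non-compact, perfect or two-point cases of $\mathcal{L}_{\Gamma,\p}$ (Corollary \ref{limitsetcases}) is handled uniformly — in the two-point case $\St_{\Gamma,\p}\cong\Z$ and the statement is nearly trivial, but the perfect case genuinely needs the Stone--Weierstrass-type density of cross-ratios. A secondary, bookkeeping obstacle is checking that the topology on $\C_{S,\otimes}^\times$ and the inverse-limit description of the integration pairing interact correctly so that ``dense image of $\Phi_S$'' really does transfer to ``dense image of the integral'' after tensoring with the finite free module $A$; this is where finiteness of $A$ and the explicit cofinal system $\{U_i\}$ are essential.
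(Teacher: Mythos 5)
Your overall strategy matches the paper's: reduce density of $\mint_{(-)}\omega_\mu$ to density of the image of $\Phi_S$ (equivalently, of each $\Phi_\p$ in $\St_{\Gamma,\p}^{\mrm{ct}}(\C^\times)$), and then assemble the factors through the completed tensor product. Your first-paragraph reduction and your tensoring step are both sound.

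The gap is in the one-variable argument. The assertion that ``one can choose $x_\p$ inside one piece and $y_\p$ outside to produce a locally constant function taking prescribed distinct values modulo $V$'' does not hold as stated: the parameters $x_\p,y_\p$ of a cross-ratio lie in $\Omega_{\Gamma,\p}$, which is disjoint from $\mathcal{L}_{\Gamma,\p}$, so they cannot lie ``inside'' a clopen piece of the limit set; and a single function $t\mapsto\frac{t-x_\p}{t-y_\p}$ restricted to $\mathcal{L}_{\Gamma,\p}$ is continuous but not locally constant, let alone an indicator-type function realizing arbitrary locally constant data modulo $V$. What actually closes the one-variable case --- and this is the route the paper takes --- is to first change coordinates on $\PP(\C)$ so that $\infty\notin\mathcal{L}_{\Gamma,\p}$. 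Then degree-zero divisors $\sum_i n_i[x_i]-\bigl(\sum_i n_i\bigr)[\infty]$ with $x_i\in\Omega_{\Gamma,\p}$ map under $\Phi_\p$ to the classes of $\prod_i(t-x_i)^{n_i}$, and since $\C$ is algebraically closed this shows the image of $\Phi_\p$ contains every polynomial over $\C$ that is nowhere zero on $\mathcal{L}_{\Gamma,\p}$. Given a continuous $f\colon\mathcal{L}_{\Gamma,\p}\to\C^\times$, the $p$-adic Stone--Weierstrass theorem yields a polynomial $g$ with $\sup_{\mathcal{L}_{\Gamma,\p}}|f-g|$ as small as desired; once this is less than $\inf_{\mathcal{L}_{\Gamma,\p}}|f|$, the polynomial $g$ is nonvanishing on $\mathcal{L}_{\Gamma,\p}$ and $f/g$ is uniformly close to $1$, hence lies in any prescribed open subgroup $V\subseteq\C^\times$. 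You do invoke ``Stone--Weierstrass-type density'' at the end as the needed tool, which is the right instinct, but the concrete mechanism you propose (separating clopen partitions by the locations of $x_\p,y_\p$) is not that argument and, as written, does not go through; replacing it with the change-of-coordinates plus the polynomial observation repairs the proof.
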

\begin{proof}
By hypothesis, integration over $\mu$ induces a surjective homomorphism from $\St_{\Gamma,S}^{\mrm{ct}}(M)$ to $M$ for  every second-countable prodiscrete group $M$.
Therefore, it suffices to prove that the image of the homomorphism $\Phi_{\p}$ is dense  for all $\p \in S$.
By changing coordinates on $\PP(\C)$ we may assume that $\infty \notin \mathcal{L}_{\Gamma,\p}$.
In this new coordinates, the image of $\Phi_{\p}$ contains all polynomials with $\C$-coefficients, which do not vanish on $\mathcal{L}_{\Gamma,\p}$.
Hence, density follows from the $p$-adic Stone--Weierstraß theorem.
\end{proof}

The preceding lemma can be applied to the following situation:
assume that $\Gamma$ is normal, then the abelian group $H_{\Gamma,S}$ is finitely generated and free by Corollary \ref{fingen}.
Thus, the universal multiplicative integral
\[
\mint_{D}\omega_{\mu_\Gamma^{S}}\ \in\ H_{\Gamma,S}\otimes_\Z \C_{S,\otimes}^{\times}
\]
exists for all $D\in \mathcal{Z}_{\plectic}(\Omega_{\Gamma})$ and $\mu_\Gamma^{S}$ is surjective by construction.

\subsection{Plectic Jacobians and the Abel--Jacobi map}
Let $(F_S,\iota)$ be an embedded local \'etale algebra and $X_\Gamma$ a connected Mumford variety over it.
Since the universal measure $\mu_\Gamma^{S}$ is $\Gamma$-invariant, it defines a $\Aut(\C/F_S)$-equivariant homomorphism
\begin{align}\label{almostAJ}
\mint_{(-)} \omega_{\mu_\Gamma^{S}}\colon \mathcal{Z}_{\plectic}(\Omega_{\Gamma})_\Gamma \too  H_{\Gamma,S}\otimes_\Z \C_{S,\otimes}^{\times}.
\end{align}
In order to obtain a homomorphism from the group of $0$-cycles of degree $(0,\ldots,0)$ on $X_\Gamma$, we mod out by the relations coming from the kernel of the quotient map $\Theta_\Gamma\colon\mathcal{Z}_{\plectic}(\Omega_{\Gamma})_\Gamma\to\mathcal{Z}_{\plectic}(X_{\Gamma})$.
Thus, we define 
\[
\Lambda_\Gamma:= \mint_{\ker(\Theta_\Gamma)} \omega_{\mu_\Gamma^{S}}\ \subseteq\ H_{\Gamma,S}\otimes_\Z \C_{S,\otimes}^{\times} .
\]

\begin{definition}\label{Def: plectic jacobians}
The \emph{plectic Jacobian} of a connected Mumford variety $X_\Gamma$ is the quotient
	\[
	\mathcal{J}_{\plectic}(X_\Gamma):=\left(H_{\Gamma,S}\otimes_\Z \C_{S,\otimes}^{\times}\right)/\overline{\Lambda_\Gamma},
	\]
	where $\overline{\Lambda_\Gamma}$ denotes the closure of $\Lambda_\Gamma$.
The plectic Abel--Jacobi map of $X_\Gamma$ is the homomorphism
\[
\alpha(X_\Gamma)\colon \mathcal{Z}_{\plectic}(X_{\Gamma})\too \mathcal{J}_{\plectic}(X_\Gamma)
\]
induced by \eqref{almostAJ}.
Given a Mumford variety $X=X_{\Gamma_1}\coprod\ldots\coprod X_{\Gamma_n}$ we put
\begin{align*}
\mathcal{J}_{\plectic}(X)&:=\mathcal{J}_{\plectic}(X_{\Gamma_1})\oplus\cdots\oplus\mathcal{J}_{\plectic}(X_{\Gamma_n})\\
\intertext{and}
\alpha(X)&:=\alpha(X_{\Gamma_1}) \oplus\cdots\oplus \alpha(X_{\Gamma_n}).
\end{align*}
\end{definition}

The action of $\Aut(\C/F_S)$ on $H_{\Gamma,S}\otimes_\Z \C_{S,\otimes}^{\times}$ descends to an action on $\mathcal{J}_{\plectic}(X_\Gamma)$.
From the construction and Lemma \ref{dense} we deduce:
\begin{proposition}
Let $X$ be a Mumford variety over $(F_S,\iota)$.
Its plectic Jacobian $\mathcal{J}_{\plectic}(X)$ is a prodiscrete group with a continuous $\Aut(\C/F_S)$-action.
The plectic Abel--Jacobi map $\alpha(X)$ is $\Aut(\C/F_S)$-equivariant and has dense image.
\end{proposition}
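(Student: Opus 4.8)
The plan is to assemble the three asserted properties---prodiscreteness, continuity of the $\Aut(\C/F_S)$-action, and density of the image---by reducing each to a statement already established in the excerpt. Since $\mathcal{J}_\plectic(X)$ is a finite direct sum of the $\mathcal{J}_\plectic(X_{\Gamma_i})$, and a finite direct sum of prodiscrete groups (with continuous group action) is again prodiscrete (with continuous action), and the direct sum $\alpha(X)=\bigoplus_i\alpha(X_{\Gamma_i})$ has dense image as soon as each $\alpha(X_{\Gamma_i})$ does, it suffices to treat a single connected Mumford variety $X_\Gamma$.

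For prodiscreteness: by Corollary~\ref{fingen} the abelian group $H_{\Gamma,S}$ is finitely generated, and (as used right after Lemma~\ref{dense}) it is free, so $H_{\Gamma,S}\otimes_\Z\C_{S,\otimes}^\times$ is a finite direct sum of copies of $\C_{S,\otimes}^\times$, which is second-countable prodiscrete by construction. A quotient of a prodiscrete group by a \emph{closed} subgroup is again prodiscrete (and second-countable); here we quotient by $\overline{\Lambda_\Gamma}$, which is closed by definition. So $\mathcal{J}_\plectic(X_\Gamma)$ is prodiscrete. Continuity of the $\Aut(\C/F_S)$-action: the componentwise action on $\otimes_{\p\in S}\C^\times$ is continuous, hence extends continuously to the completion $\C_{S,\otimes}^\times$; tensoring with the finite free module $H_{\Gamma,S}$ keeps it continuous, and since the action preserves $\Lambda_\Gamma$ (the multiplicative integral is $\Aut(\C/F_S)$-equivariant, and $\ker(\Theta_\Gamma)$ is $\Aut(\C/F_S)$-stable because $\Theta_\Gamma$ is equivariant) it preserves $\overline{\Lambda_\Gamma}$ by continuity, so it descends to a continuous action on the quotient. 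Equivariance of $\alpha(X_\Gamma)$ is then immediate from the equivariance of \eqref{almostAJ} noted in the text.

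For the density of the image: the image of $\alpha(X_\Gamma)$ is the image of the composite $\mathcal{Z}_\plectic(\Omega_\Gamma)_\Gamma\to H_{\Gamma,S}\otimes_\Z\C_{S,\otimes}^\times\to\mathcal{J}_\plectic(X_\Gamma)$, since $\Theta_\Gamma$ is surjective onto $\mathcal{Z}_\plectic(X_\Gamma)$ and $\alpha(X_\Gamma)$ is induced from \eqref{almostAJ}. By Lemma~\ref{dense} applied to the universal measure $\mu_\Gamma^S$---which is surjective by construction, and valued in the finite free module $A=H_{\Gamma,S}$ by Corollary~\ref{fingen}---the image of $\mint_{(-)}\omega_{\mu_\Gamma^S}$ is dense in $H_{\Gamma,S}\otimes_\Z\C_{S,\otimes}^\times$. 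The quotient map onto $\mathcal{J}_\plectic(X_\Gamma)$ is continuous and surjective, hence sends a dense set to a dense set, so the image of $\alpha(X_\Gamma)$ is dense. Finally, taking finite direct sums over the connected components preserves both prodiscreteness and density, completing the proof.

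The step requiring the most care is the verification that $\overline{\Lambda_\Gamma}$ is a genuine subgroup stable under the action, so that the quotient group and its $\Aut(\C/F_S)$-action are well defined; this is where one must use that $\ker(\Theta_\Gamma)$ is a $\Gamma$-submodule stable under $\Aut(\C/F_S)$ and that the multiplicative integral is a group homomorphism (so $\Lambda_\Gamma$ is a subgroup before taking closure). Everything else is a formal combination of Corollary~\ref{fingen}, Lemma~\ref{dense}, and the construction of $\C_{S,\otimes}^\times$, so I do not expect serious obstacles.
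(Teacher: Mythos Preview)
Your proposal is correct and follows exactly the approach the paper intends: the paper's own proof is the single sentence ``From the construction and Lemma~\ref{dense} we deduce,'' and you have simply unpacked what that sentence means---reduction to the connected case, prodiscreteness from Corollary~\ref{fingen} and the construction of $\C_{S,\otimes}^\times$, continuity and equivariance from the remarks already in the text, and density from Lemma~\ref{dense} applied to the surjective universal measure $\mu_\Gamma^S$.
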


\begin{remark}
When $X_\Gamma$ is a one-dimensional, $\Lambda_\Gamma=\overline{\Lambda_\Gamma}$ is a discrete subgroup.
Moreover, the plectic Jacobian $\mathcal{J}_{\plectic}(X_\Gamma)$ of $X_\Gamma$ can be identified with the set of $\C$-valued points of the Jacobian of $X_\Gamma$ via the Manin--Drinfeld's $p$-adic uniformization theorem (see for example \cite[Section 2.5]{Das05}), and the plectic Abel--Jacobi map coincides with the $p$-adic uniformization of the classical one.
In particular, the kernel of the plectic Abel--Jacobi map is just the subgroup of principal divisors.
Is it possible to give a general geometric description of the kernel of the plectic Abel--Jacobi map?
\end{remark}

\begin{example}
Let $X_{\Gamma}$ be a connected Mumford variety over $(F_S,\iota)$ such that $\mbf{S}_{\Gamma}\neq S$, then $\St_{\Gamma,S}=0$ and consequently $\mathcal{J}_\plectic(X_\infty)=0$. This generalizes the fact that the Jacobian of the projective line is trivial.
\end{example}

For the reminder of this section we fix two embedded local \'etale algebras $(F_S,\iota)$ and $(F_{S'}',\iota')$  of the same characteristic and residue characteristic.
Given Mumford varieties $X$ and $X'$ over $(F_S,\iota)$ and $(F_{S'}',\iota')$ we have already seen in \eqref{monoidal1} that the group of $0$-cycles of degree $(0,\ldots,0)$ on their product equals the tensor product of $\mathcal{Z}_{\plectic}(X)$ and $\mathcal{Z}_{\plectic}(X')$.
In the following we will show that the same is true for their plectic Jacobians.
First, assume that $X=X_{\Gamma}$ and $X'=X_{\Gamma'}$ are connected.
The decomposition \eqref{productdecomp} induces an isomorphism
\[
\Pi\colon \left(H_{\Gamma,S}\otimes_\Z \C_{S,\otimes}^{\times}\right) \widehat{\otimes} \left(H_{\Gamma',S'}\otimes_\Z \C_{S',\otimes}^{\times}\right)
\xlongrightarrow{\sim}
\left(H_{\Gamma\times \Gamma',S'\coprod S'}\right)\otimes_\Z \C_{S\coprod S',\otimes}^{\times}.
\]
\begin{theorem}\label{monoidalthm}
Let $X_{\Gamma}$ and $X_{\Gamma'}$ be connected Mumford varieties over $(F_S,\iota)$ and $(F'_{S'},\iota')$.
Then $\Pi$ descends to an isomorphism
\begin{align}\label{productdecomp2}
\mathcal{J}_{\plectic}(X_\Gamma)\hspace{1mm}\widehat{\otimes}\hspace{1mm}\mathcal{J}_{\plectic}(X_{\Gamma'})\xlongrightarrow{\sim} \mathcal{J}_{\plectic}(X_{\Gamma\times \Gamma'})
\end{align}
of topological $\Aut(\C/F_S\times F_{S'}')$-modules.
\end{theorem}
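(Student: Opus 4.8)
The plan is to trace how the algebraic isomorphism $\Pi$ interacts with the data defining the plectic Jacobians, and thereby to identify the closed subgroup $\overline{\Lambda_{\Gamma\times\Gamma'}}$ with the one that is quotiented out to form $\mathcal{J}_{\plectic}(X_\Gamma)\widehat{\otimes}\mathcal{J}_{\plectic}(X_{\Gamma'})$. Write $N=H_{\Gamma,S}\otimes_\Z\C_{S,\otimes}^{\times}$ and $N'=H_{\Gamma',S'}\otimes_\Z\C_{S',\otimes}^{\times}$, so that $\Pi\colon N\widehat{\otimes}N'\xrightarrow{\sim}N''$ with $N''=H_{\Gamma\times\Gamma',S\cup S'}\otimes_\Z\C_{S\cup S',\otimes}^{\times}$, and $\mathcal{J}_{\plectic}(X_\Gamma)=N/\overline{\Lambda_\Gamma}$, and similarly for $\Gamma'$ and $\Gamma\times\Gamma'$. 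First I would record the algebraic compatibilities. Since $\Omega_{\Gamma\times\Gamma'}=\Omega_\Gamma\times\Omega_{\Gamma'}$, the isomorphism $\psi$ identifies $\mathcal{Z}_{\plectic}(\Omega_{\Gamma\times\Gamma'})$ with $\mathcal{Z}_{\plectic}(\Omega_\Gamma)\otimes_\Z\mathcal{Z}_{\plectic}(\Omega_{\Gamma'})$ as $\Gamma\times\Gamma'$-modules, so on $\Gamma\times\Gamma'$-coinvariants $\mathcal{Z}_{\plectic}(\Omega_{\Gamma\times\Gamma'})_{\Gamma\times\Gamma'}\cong\mathcal{Z}_{\plectic}(\Omega_\Gamma)_\Gamma\otimes_\Z\mathcal{Z}_{\plectic}(\Omega_{\Gamma'})_{\Gamma'}$; and because the uniformization of a product is the product of the uniformizations, the map $\Theta_{\Gamma\times\Gamma'}$ is identified, via \eqref{monoidal1}, with $\Theta_\Gamma\otimes\Theta_{\Gamma'}$.

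Next I would show that under these identifications $\Pi$ turns the universal multiplicative integral into a ``product'' of integrals, namely
\[
\mint_{D\otimes D'}\omega_{\mu_{\Gamma\times\Gamma'}^{S\cup S'}}=\Pi\!\left(\mint_{D}\omega_{\mu_\Gamma^{S}}\otimes\mint_{D'}\omega_{\mu_{\Gamma'}^{S'}}\right)
\]
for $D\in\mathcal{Z}_{\plectic}(\Omega_\Gamma)_\Gamma$ and $D'\in\mathcal{Z}_{\plectic}(\Omega_{\Gamma'})_{\Gamma'}$; this is a formal consequence of the factorization \eqref{fubini} of the universal measure, of the product decomposition \eqref{products} at the level of continuous Steinberg modules (which yields $\Phi_{S\cup S'}(D\otimes D')=\Phi_S(D)\otimes\Phi_{S'}(D')$), and of the construction of the integration pairing \eqref{integration2}. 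Since $\Theta_{\Gamma\times\Gamma'}$ is surjective onto $\mathcal{Z}_{\plectic}(X_{\Gamma\times\Gamma'})$, a subgroup of the free abelian group $\mathcal{Z}(X_{\Gamma\times\Gamma'})$ and hence itself free, the relevant $\mrm{Tor}_1$-groups vanish and
\[
\ker(\Theta_{\Gamma\times\Gamma'})=\ker(\Theta_\Gamma)\otimes_\Z\mathcal{Z}_{\plectic}(\Omega_{\Gamma'})_{\Gamma'}+\mathcal{Z}_{\plectic}(\Omega_\Gamma)_\Gamma\otimes_\Z\ker(\Theta_{\Gamma'}).
\]
Applying the universal integral to both summands and invoking the displayed product formula, $\Lambda_{\Gamma\times\Gamma'}=\Pi(\Lambda_\Gamma\otimes I'+I\otimes\Lambda_{\Gamma'})$ (viewed inside $N\widehat{\otimes}N'$), where $I\subseteq N$ and $I'\subseteq N'$ are the images of the universal integrals of $\Gamma$ and $\Gamma'$. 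By Lemma \ref{dense} these images are dense, so taking closures gives $\overline{\Lambda_{\Gamma\times\Gamma'}}=\Pi\big(\overline{\Lambda_\Gamma\otimes_\Z N'+N\otimes_\Z\Lambda_{\Gamma'}}\big)$.

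To conclude I would verify the purely topological statement that for second-countable prodiscrete $N,N'$ and subgroups $\Lambda\subseteq N$, $\Lambda'\subseteq N'$, the canonical map $N\widehat{\otimes}N'\to(N/\overline{\Lambda})\widehat{\otimes}(N'/\overline{\Lambda'})$ is a continuous open surjection whose kernel is exactly the closure of the image of $\Lambda\otimes_\Z N'+N\otimes_\Z\Lambda'$. Granting this, combining it with the previous paragraph shows that $\Pi$ descends to the isomorphism \eqref{productdecomp2}, and the $\Aut(\C/F_S\times F_{S'}')$-equivariance is inherited from that of $\Pi$, of the universal measures $\mu_\Gamma^{S}$ and $\mu_{\Gamma'}^{S'}$, and of the maps $\Theta$. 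I expect this last, topological, step to be the main obstacle: the completed tensor product is defined as a completion of an ordinary tensor product for the ``box'' topology of open subgroups, and one must check that its formation commutes with quotients by closed subgroups and that passing to closures does not enlarge the kernel — in effect, that $\widehat{\otimes}$ is right exact in the appropriate topological sense. This becomes routine once every group is rewritten through its defining inverse limit, but that is where the care lies; the rest of the argument is a formal manipulation of the Fubini factorization \eqref{fubini} and the density statement of Lemma \ref{dense}.
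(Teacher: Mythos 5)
Your proof follows the same overall strategy as the paper's: compute $\ker(\Theta_{\Gamma\times\Gamma'})$ from $\ker\Theta_\Gamma$ and $\ker\Theta_{\Gamma'}$, apply the universal integral using the Fubini factorization \eqref{fubini}, invoke the density Lemma \ref{dense}, and pass to closures. The one genuine variation is in the kernel computation: the paper cites the K\"unneth formula for group homology, whereas you give a direct argument from the observation that the targets $\mathcal{Z}_{\plectic}(X_\Gamma)$, $\mathcal{Z}_{\plectic}(X_{\Gamma'})$ are free abelian (being subgroups of free abelian groups), so the relevant $\mathrm{Tor}_1$ obstructions vanish and the kernel of a tensor product of surjections onto flat modules is the sum of the two expected submodules. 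This is more elementary and also slightly more precise --- you correctly write the kernel as a sum, not the direct sum the paper's display suggests (the intersection $\ker\Theta_\Gamma\otimes\ker\Theta_{\Gamma'}$ need not vanish). One small slip: you cite the freeness of $\mathcal{Z}_{\plectic}(X_{\Gamma\times\Gamma'})$, but what you actually need is the freeness of each factor $\mathcal{Z}_{\plectic}(X_\Gamma)$ and $\mathcal{Z}_{\plectic}(X_{\Gamma'})$; the same ``subgroup of a free abelian group'' argument applies, so nothing breaks, but it is worth stating the hypothesis that is actually being used. Finally, you isolate and flag the topological step --- that forming $\widehat{\otimes}$ commutes with quotients by closed subgroups, i.e.\ that $(N/\overline{\Lambda})\widehat{\otimes}(N'/\overline{\Lambda'})$ is the quotient of $N\widehat{\otimes}N'$ by the closure of $\Lambda\otimes_\Z N'+N\otimes_\Z\Lambda'$. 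The paper leaves this implicit; your instinct that this is where the real care lies, and that it is dispatched by unwinding the inverse-limit definitions, is sound and arguably an improvement in exposition.
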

\begin{proof}
By the Künneth formula for group homology we observe that
\[
\ker(\Theta_{\Gamma\times\Gamma'})=\ker(\Theta_\Gamma)\otimes_\Z \mathcal{Z}_{\plectic}(\Omega_{\Gamma'})_{\Gamma'}\ \oplus\ \mathcal{Z}_{\plectic}(\Omega_{\Gamma})_{\Gamma}\otimes_\Z\ker(\Theta_{\Gamma'}).
\]
Hence, the product decomposition \eqref{fubini} of the universal measure in conjunction with Lemma \ref{dense} implies that
\[
\overline{\Lambda_{\Gamma\times\Gamma'}}=
\overline{\Lambda_\Gamma}\ \widehat{\otimes}\ \left(H_{\Gamma',S'}\otimes_\Z \C_{S',\otimes}^{\times}\right)
\ \oplus\ 
\left(H_{\Gamma,S} \otimes_\Z \C_{S,\otimes}^{\times}\right)\ \widehat{\otimes}\ \overline{\Lambda_{\Gamma'}},
\]
which proves the claim.
\end{proof}

\begin{corollary}\label{monoidalcor}
Let $X$ and $X'$ be connected Mumford varieties over $(F_S,\iota)$ and $(F'_{S'},\iota')$.
The isomorphisms \eqref{productdecomp2} induce an isomorphism
\begin{align}\label{monoidal2}
\mathcal{J}_{\plectic}(X)\hspace{1mm}\widehat{\otimes}\hspace{1mm}\mathcal{J}_{\plectic}(X')\xlongrightarrow{\sim} \mathcal{J}_{\plectic}(X \times X')
\end{align}
of topological $\Aut(\C/F_S\times F'_{S'})$-modules.
Moreover, the Abel--Jacobi map 
\[
\alpha(X\times X') = \alpha(X) \otimes \alpha(X')
\]
is compatible with the decomposition \eqref{monoidal1} and \eqref{monoidal2}.
\end{corollary}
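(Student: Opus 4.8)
Since $X$ and $X'$ are connected, we may write $X=X_\Gamma$ and $X'=X_{\Gamma'}$ for torsion-free normal plectic subgroups $\Gamma\subseteq\PGL_2(F_S)$ and $\Gamma'\subseteq\PGL_2(F'_{S'})$. Then $X\times X'=X_{\Gamma\times\Gamma'}$, and by Definition \ref{Def: plectic jacobians} the isomorphism \eqref{monoidal2} is literally the isomorphism \eqref{productdecomp2} produced by Theorem \ref{monoidalthm}. Hence the only substantive claim is the compatibility of the plectic Abel--Jacobi maps. Concretely, using $\Omega_{\Gamma\times\Gamma'}=\Omega_\Gamma\times\Omega_{\Gamma'}$ and $\mathcal{L}_{\Gamma\times\Gamma',S\cup S'}=\mathcal{L}_{\Gamma,S}\times\mathcal{L}_{\Gamma',S'}$ (Lemma \ref{product}), the isomorphism $\psi$ identifies $\mathcal{Z}_{\plectic}(\Omega_{\Gamma\times\Gamma'})$ with $\mathcal{Z}_{\plectic}(\Omega_\Gamma)\otimes\mathcal{Z}_{\plectic}(\Omega_{\Gamma'})$, and the plan is to show that under this identification and under $\Pi$ the integration homomorphism \eqref{almostAJ} for $\Gamma\times\Gamma'$ is the completed tensor product of those for $\Gamma$ and $\Gamma'$. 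Granting this, the corollary follows: one passes to $\Gamma\times\Gamma'$-coinvariants (legitimate since the universal measure is invariant), and the proof of Theorem \ref{monoidalthm} has already matched the closed subgroups $\overline{\Lambda_{\Gamma\times\Gamma'}}$ and $\overline{\Lambda_\Gamma}\,\widehat{\otimes}\,(\cdots)\oplus(\cdots)\,\widehat{\otimes}\,\overline{\Lambda_{\Gamma'}}$ by which one divides, via the Künneth decomposition of $\ker(\Theta_{\Gamma\times\Gamma'})$ recalled there.

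To establish the factorization I would assemble three ingredients. First, $\Phi_{S\cup S'}$ is multiplicative on elementary tensors: directly from its defining formula, for $D=\bigotimes_{\p\in S}([x_\p]-[y_\p])$ and $D'=\bigotimes_{\q\in S'}([x_\q]-[y_\q])$ the function $\Phi_{S\cup S'}(\psi(D\otimes D'))$ is the external product of $\Phi_S(D)$ and $\Phi_{S'}(D')$ under the map on continuous Steinberg modules induced by \eqref{products}, where $\C^\times_{S\cup S',\otimes}=\C^\times_{S,\otimes}\,\widehat{\otimes}\,\C^\times_{S',\otimes}$ by associativity of $\widehat{\otimes}$ for second-countable prodiscrete groups. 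Second, the integration pairing \eqref{integration2} is monoidal, $\langle\mu_1\otimes\mu_2,\,F_1\boxtimes F_2\rangle=\langle\mu_1,F_1\rangle\otimes\langle\mu_2,F_2\rangle$; I would check this by unwinding the construction of \eqref{integration1}, where after \eqref{products} it reduces on each finite quotient $M/U_i$ to the elementary identity that the integral of a product function against a product measure factors, and then passage to the inverse limit over the $U_i$ is harmless because $A$ is finite free over $\Z$. Third, the Fubini factorization $\mu_{\Gamma\times\Gamma'}^{S\cup S'}=\mu_\Gamma^S\otimes\mu_{\Gamma'}^{S'}$ of \eqref{fubini}, together with $H_{\Gamma\times\Gamma',S\cup S'}=H_{\Gamma,S}\otimes H_{\Gamma',S'}$ of \eqref{productdecomp}. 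Chaining these yields, for all $D\in\mathcal{Z}_{\plectic}(\Omega_\Gamma)$ and $D'\in\mathcal{Z}_{\plectic}(\Omega_{\Gamma'})$,
\[
\mint_{\psi(D\otimes D')}\omega_{\mu_{\Gamma\times\Gamma'}^{S\cup S'}}\;=\;\Pi\!\left(\mint_{D}\omega_{\mu_\Gamma^S}\,\otimes\,\mint_{D'}\omega_{\mu_{\Gamma'}^{S'}}\right)
\]
in $H_{\Gamma\times\Gamma',S\cup S'}\otimes_\Z\C^\times_{S\cup S',\otimes}$, which is exactly the asserted commuting square; the $\Aut(\C/F_S\times F'_{S'})$-equivariance is inherited termwise from Theorem \ref{monoidalthm}, and the identity $\alpha(X\times X')=\alpha(X)\otimes\alpha(X')$ then reads off after dividing by the closures.

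The step I expect to require genuine care — the main obstacle — is the interaction between the monoidality of the integration pairing and the completed-tensor-product topology: one must verify that forming $\langle-,-\rangle$ commutes simultaneously with the inverse limits defining $C(\mathcal{L},M)=\varprojlim_i C(\mathcal{L},M/U_i)$ and $M_1\,\widehat{\otimes}\,M_2$, so that the external product of two multiplicative integrals genuinely lands in, and depends continuously on its arguments into, $\C^\times_{S\cup S',\otimes}$ rather than merely the algebraic tensor product. Everything else is bookkeeping that piggybacks on the proof of Theorem \ref{monoidalthm}.
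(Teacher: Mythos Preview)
Your proposal is correct and follows the natural approach; in fact the paper gives no proof at all for this corollary, treating both claims as immediate consequences of Theorem \ref{monoidalthm} and the definitions. Your unpacking of the Abel--Jacobi compatibility via the multiplicativity of $\Phi$, the monoidality of the integration pairing, and the Fubini identity \eqref{fubini} is exactly the computation one would carry out to verify what the paper leaves implicit, and the technical point you flag about completed tensor products is handled precisely because the pairing \eqref{integration1} is \emph{defined} as an inverse limit over discrete quotients, so compatibility with $\widehat{\otimes}$ is built in.
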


\subsection{Hecke correspondences}\label{Hecke}
We fix an embedded local \'etale algebra $(F_S,\iota)$.
\begin{definition}
A morphism from a connected Mumford variety $X_\Gamma$ over $(F_S,\iota)$ to another connected Mumford variety $X_{\Gamma'}$ over $(F_S,\iota)$ is a rigid analytic analytic map of the form
\[
f(g)\colon X_\Gamma \too X_{\Gamma'},\quad [x] \mapsto [gx],
\]
where $g\in\PGL_2(F_S)$ is an element such that $g\Gamma g^{-1}\subseteq \Gamma'$ is a finite index subgroup.

\noindent A morphism between two Mumford varieties $X=X_{\Gamma_1}\coprod\ldots\coprod X_{\Gamma_n}$ and $X'=X_{\Gamma'_1}\coprod\ldots\coprod X_{\Gamma'_m}$ over $(F_S,\iota)$ is a rigid analytic map such that there an injection $\sigma=\sigma(f)\colon (1,\ldots,n)\into (1,\ldots,m)$ and a morphism of connected Mumford varieties $f_i\colon X_{\Gamma_i}\to X_{\Gamma'_{\sigma(i)}}$ for each $1\leq i \leq n$.
\end{definition}
The composition of two morphism of Mumford varieties is clearly a morphism of Mumford varieties.
Note that a morphism $f$ of Mumford varieties is surjective if and only if $\sigma(f)$ is surjective.
Let $f\colon X\to X'$ be a morphism of Mumford varieties.
In case $X$ and $X'$ are connected, the pushforward map
\[
f_\ast\colon \mathcal{Z}(X) \too \mathcal{Z}(X')
\]
on $0$-cycles corresponds to the canonical map between the groups of coinvariants under the identification \eqref{coinvariants}.
Similarly, the pullback map
\[
f^\ast\colon \mathcal{Z}(X') \too \mathcal{Z}(X)
\]
is given by the transfer map on coinvariants.
In particular, both of them restrict to $\Aut(\C/F_S)$-equivariant homomorphisms
\[
f_\ast\colon \mathcal{Z}_{\plectic}(X) \too \mathcal{Z}_{\plectic}(X'),\qquad
f^\ast\colon \mathcal{Z}_{\plectic}(X') \too \mathcal{Z}_{\plectic}(X).
\]
on groups of $0$-cycles of degree $(0,\ldots,0)$.
Let $g$ be an element of $\PGL_2(F_S)$ and $\Gamma,\Gamma'\subseteq \PGL_2(F_S)$ plectic subgroup such that $g\Gamma g^{-1}\subseteq \Gamma'$ is a finite index subgroup.
It follows that  $\St_{g\Gamma g^{-1},S}=\St_{\Gamma',S}$.
Functoriality of the operation of taking coinvariants and the existence of transfer maps for finite index subgroups gives homomorphisms
\[
f_\ast\colon H_{\Gamma,S} \too H_{\Gamma',S},\qquad
f^\ast\colon H_{\Gamma',S} \too H_{\Gamma,S},
\]
that in turn induce $\Aut(\C/F_S)$-equivariant pushforward and pullback homomorphisms between plectic Jacobians:
\[
f_\ast\colon \mathcal{J}_{\plectic}(X) \too \mathcal{J}_{\plectic}(X'),\qquad
f^\ast\colon \mathcal{J}_{\plectic}(X') \too \mathcal{J}_{\plectic}(X).
\]
Going through the construction one easily verifies the following:
\begin{lemma}\label{functorial}
Let $f\colon X \to X'$ be a morphism of Mumford varieties over $(F_S,\iota)$.
The Abel--Jacobi map commutes with pushforward and pullback, that is,
\begin{align*}
\alpha(X') \circ f_\ast = f_\ast\circ\alpha(X)\quad \mbox{and}\quad \alpha(X) \circ f^\ast = f^\ast\circ\alpha(X').
\end{align*}
\end{lemma}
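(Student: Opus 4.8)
The plan is to reduce everything to the connected case and then unwind the definitions of pushforward and pullback, checking that the universal measure and the multiplicative integral transform in the expected way under the correspondences. Since all the maps in sight (the Abel--Jacobi maps, $f_\ast$, $f^\ast$) are defined componentwise on the disjoint union decompositions of $X$ and $X'$, and $\sigma(f)$ is an injection, it suffices to treat a single morphism of connected Mumford varieties $f = f(g)\colon X_\Gamma \to X_{\Gamma'}$ with $g\Gamma g^{-1}\subseteq\Gamma'$ of finite index. Write $\Gamma'' := g\Gamma g^{-1}$, so that $\St_{\Gamma'',S} = \St_{\Gamma',S}$ and $\mathcal{L}_{\Gamma'',S} = g(\mathcal{L}_{\Gamma,S})$ by Lemma \ref{action}.

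First I would treat the pushforward identity $\alpha(X_{\Gamma'})\circ f_\ast = f_\ast\circ\alpha(X_\Gamma)$. The key point is a commutative square at the level of $0$-cycles on $\Omega$: left-translation by $g$ carries $\Omega_\Gamma$ isomorphically onto $\Omega_{\Gamma''}$ and intertwines the $\Gamma$-action with the $\Gamma''$-action, hence induces $g_\ast\colon \mathcal{Z}_{\plectic}(\Omega_\Gamma)_\Gamma \xrightarrow{\sim} \mathcal{Z}_{\plectic}(\Omega_{\Gamma''})_{\Gamma''}$, after which the canonical surjection $(\St_{\Gamma'',S})_{\Gamma''}\to(\St_{\Gamma',S})_{\Gamma'}$ (coming from the finite-index inclusion $\Gamma''\subseteq\Gamma'$) realizes $f_\ast$ on the $H$-level. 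One then checks that the map $\Phi_S$ is compatible with $g$ in the sense that $\Phi_S(g_\ast D)$ is the translate of $\Phi_S(D)$ by $g$ acting on the continuous Steinberg module, and that under the identification $\mathcal{L}_{\Gamma'',S}=g(\mathcal{L}_{\Gamma,S})$ one has $g_\ast\mu_\Gamma^S = \mu_{\Gamma''}^S$ followed by the natural map $H_{\Gamma,S}\to H_{\Gamma'',S}=H_{\Gamma',S}$ being exactly $f_\ast$. Combining these, the pairing $\langle\mu_\Gamma^S,\Phi_S(D)\rangle$ maps to $\langle\mu_{\Gamma'}^S,\Phi_S(f_\ast D)\rangle$, which is precisely the asserted commutation; since both Abel--Jacobi maps are defined by passing this integral to the relevant quotients and $f_\ast$ respects $\overline{\Lambda_\Gamma}\to\overline{\Lambda_{\Gamma'}}$ (because $f_\ast\ker\Theta_\Gamma\subseteq\ker\Theta_{\Gamma'}$), the identity descends.

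For the pullback identity, the same bookkeeping applies with transfer maps replacing corestriction: here $f^\ast$ on $0$-cycles is $g^{-1}$-translation composed with the transfer $(\St_{\Gamma',S})_{\Gamma'}\to(\St_{\Gamma'',S})_{\Gamma''}$ for the finite-index subgroup $\Gamma''\subseteq\Gamma'$, and correspondingly $f^\ast\colon H_{\Gamma',S}\to H_{\Gamma,S}$ is built from the transfer map. The crucial compatibility is that the universal measure $\mu_{\Gamma'}^S$ pulls back along the transfer to the universal measure $\mu_{\Gamma}^S$ pushed forward along $f^\ast$ — this is the measure-theoretic shadow of the standard fact that transfer is adjoint to corestriction — together with the observation that $\Phi_S$ intertwines $f^\ast$ on cycles with summing over coset representatives in the Steinberg module. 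Once these are in place, $\langle\mu_{\Gamma'}^S,\Phi_S(D')\rangle$ maps to $\langle\mu_\Gamma^S,\Phi_S(f^\ast D')\rangle$, and descending through the quotients defining the Jacobians gives $\alpha(X_\Gamma)\circ f^\ast = f^\ast\circ\alpha(X_{\Gamma'})$.

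The main obstacle I expect is the bookkeeping in the pullback case: one must verify that the transfer map on group homology, the transfer (corestriction-adjoint) on Steinberg modules, and the "sum over coset representatives" description of $f^\ast$ on $0$-cycles are all mutually compatible, and that the universal measure intertwines them. This is essentially the content of the identity $f^\ast\circ\alpha(X_{\Gamma'}) = \alpha(X_\Gamma)\circ f^\ast$ and it hinges on a clean statement of how $\mu_\Gamma^S$ behaves under finite-index restriction — namely that the pairing \eqref{integration2} is compatible with transfer on the measure side and corestriction on the function side. The pushforward case is genuinely routine by comparison, since it only uses functoriality of coinvariants. Throughout, the $\Aut(\C/F_S)$-equivariance is automatic because every map involved was already shown to be $\Aut(\C/F_S)$-equivariant in the preceding subsections, so it requires no separate argument.
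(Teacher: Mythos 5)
The paper gives no proof of this lemma beyond the remark that it follows by "going through the construction," so your proposal is essentially a (correct) expansion of exactly what the authors had in mind: reduce to the connected case, unwind the cycle-level and $H$-level descriptions of $f_\ast$ and $f^\ast$, and use $\Gamma$-equivariance of $\Phi_S$ together with the defining property of the universal measure to see that the integration pairing intertwines corestriction/transfer on coinvariants. One small slip in notation: $H_{\Gamma'',S}$ and $H_{\Gamma',S}$ are coinvariants for different groups and are not literally equal — what you want is the canonical surjection $H_{\Gamma'',S}\to H_{\Gamma',S}$ (and, in the pullback direction, the transfer $H_{\Gamma',S}\to H_{\Gamma'',S}$), with the identification $H_{\Gamma,S}\cong H_{\Gamma'',S}$ coming from conjugation by $g$; with that correction your argument goes through as written.
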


\begin{proposition}\label{fibprod}
Let $f_1\colon X_1\to X$ and $F_2\colon X_2\to X$ morphisms of Mumford varieties over $(F_S,\iota)$.
The fibre product of rigid analytic varieties $X_1 \times_X X_2$ is a Mumford variety over $(F_S,\iota)$.
Moreover, the canonical projection maps $X_1 \times_X X_2 \to X_i$, $i=1,2$, are morphisms of Mumford varieties over $(F_S,\iota)$.
\end{proposition}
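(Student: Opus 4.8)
The plan is to reduce the statement to the case of connected Mumford varieties, and there to describe the fibre product explicitly as a quotient of a product of the uniformizing domains $\Omega$ by an explicitly constructed plectic subgroup. First I would observe that since morphisms of (possibly disconnected) Mumford varieties are indexed by injections $\sigma(f)$ of component sets together with morphisms of connected components, and since the fibre product of rigid analytic varieties decomposes as a disjoint union over pairs of components mapping to a common component of $X$, it suffices to treat the case where $X_1 = X_{\Gamma_1}$, $X_2 = X_{\Gamma_2}$, and $X = X_{\Gamma}$ are all connected. Write $f_i = f(g_i)$ with $g_i \in \PGL_2(F_S)$ and $g_i \Gamma_i g_i^{-1} \subseteq \Gamma$ of finite index, $i=1,2$. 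After replacing $\Gamma_i$ by the conjugate $g_i \Gamma_i g_i^{-1}$ (which is again a torsion-free normal plectic subgroup by Lemma \ref{action}, and does not change the rigid variety up to the isomorphisms of Remark \ref{choices}), I may assume $g_1 = g_2 = 1$, so that $\Gamma_1, \Gamma_2 \subseteq \Gamma$ are both finite index, $f_i$ being the natural projection $\Gamma_i \backslash \Omega_\Gamma \to \Gamma \backslash \Omega_\Gamma$.

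Next I would identify the fibre product. Since $\Omega_{\Gamma_1} = \Omega_{\Gamma_2} = \Omega_\Gamma$ (equality of limit sets for commensurable groups, Lemma \ref{finiteindex}), the candidate uniformizer is $\Omega_\Gamma$ itself with the diagonal action of $\Gamma_1 \cap \Gamma_2$. The key point is the \emph{set-theoretic} description: a point of $X_{\Gamma_1} \times_{X_\Gamma} X_{\Gamma_2}$ over $\C$ is a pair $(\Gamma_1 x, \Gamma_2 y)$ with $\Gamma x = \Gamma y$, i.e.\ $y = \gamma x$ for some $\gamma \in \Gamma$; the pair depends only on the coset $\Gamma_1 \gamma^{-1} \Gamma_2$, and one checks this sets up a bijection between $(X_{\Gamma_1} \times_{X_\Gamma} X_{\Gamma_2})(\C)$ and $\coprod_{\Gamma_1 \backslash \Gamma / \Gamma_2} \big(\Gamma_1 \cap \delta \Gamma_2 \delta^{-1}\big) \backslash \Omega_\Gamma$, the disjoint union over a (finite, since both are finite index) set of double coset representatives $\delta$. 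Each group $\Gamma_1 \cap \delta \Gamma_2 \delta^{-1}$ is a finite-index subgroup of $\Gamma$, hence torsion-free (subgroup of a torsion-free group), plectic (commensurable with $\Gamma$, Lemma \ref{finiteindex}), and normal (commensurable with a normal plectic group). So Theorem \ref{quotients} produces a connected Mumford variety for each double coset, and their disjoint union is a Mumford variety over $(F_S,\iota)$. To conclude it really is the fibre product I would verify the universal property, or more concretely, identify the rigid-analytic structure: the étale morphisms $\pi$ from $\Omega_\Gamma$ and the finiteness of the index guarantee that $\coprod_\delta \big(\Gamma_1 \cap \delta\Gamma_2\delta^{-1}\big)\backslash\Omega_\Gamma$, equipped with the two natural maps to $X_{\Gamma_i}$ induced by $x \mapsto x$ and $x \mapsto \delta x$ respectively, satisfies the universal property of the fibre product in rigid analytic varieties — this uses that $\pi_\Gamma$ is étale and surjective, so that the fibre product can be computed locally on the source, where everything becomes a finite étale cover. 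Finally, the projections $X_{\Gamma_1}\times_{X_\Gamma} X_{\Gamma_2} \to X_{\Gamma_i}$ restricted to the $\delta$-component are, by construction, of the form $[x]\mapsto[x]$ (for $i=1$) and $[x]\mapsto[\delta x]$ (for $i=2$) with $\Gamma_1 \cap \delta\Gamma_2\delta^{-1} \subseteq \Gamma_i$ finite index (directly, resp.\ after conjugating by $\delta$), hence are morphisms of Mumford varieties in the sense of the definition.

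The main obstacle I anticipate is the rigid-analytic bookkeeping in the last step: namely, checking that the naive set-theoretic model $\coprod_\delta (\Gamma_1\cap\delta\Gamma_2\delta^{-1})\backslash\Omega_\Gamma$ genuinely represents the fibre product \emph{as a rigid analytic variety}, not merely on $\C$-points. The cleanest route is probably to work étale-locally: since $\pi_{\Gamma_1}\colon\Omega_\Gamma\to X_{\Gamma_1}$ is surjective étale, pulling the whole diagram back along it reduces the assertion to the statement that $\Omega_\Gamma \times_{X_\Gamma} X_{\Gamma_2} \cong \coprod_{\Gamma/\Gamma_2}\Omega_\Gamma$ (a finite disjoint union, as $\Gamma_2\subseteq\Gamma$ has finite index), which is just the defining property of the covering $\pi_{\Gamma_2}$ base-changed along the covering map $\Omega_\Gamma \to X_\Gamma$; then one descends along $\pi_{\Gamma_1}$. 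I would also make sure to record that all groups appearing are torsion-free normal plectic — this is where one invokes that finite-index subgroups of normal plectic groups are normal (the remark after Lemma \ref{normalintersect}'s surrounding discussion, or directly from commensurability) — so that Definition \ref{def: Mum} applies and the output is legitimately a Mumford variety.
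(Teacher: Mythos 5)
Your proposal is correct and matches the paper's proof in essence: reduce to the case of connected Mumford varieties with $\Gamma_1,\Gamma_2\subseteq\Gamma$ finite-index inclusions, then identify the fibre product with $\coprod_{g\in\Gamma_2\backslash\Gamma/\Gamma_1}X_{\Gamma_1\cap g^{-1}\Gamma_2 g}$ (your $\delta=g^{-1}$ reindexing gives the same groups). You simply spell out more of the bookkeeping that the paper leaves implicit, namely the commensurability argument that the intersection groups remain torsion-free normal plectic, and the \'etale-local verification that the set-theoretic model really represents the rigid-analytic fibre product.
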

\begin{proof}
It is enough to consider the case where $X=X_\Gamma$, $X_1=X_{\Gamma_1}$ and $X_2=X_{\Gamma_2}$ are connected, that $\Gamma_1$ and $\Gamma_2$ are finite index subgroup of $\Gamma$, and that the morphisms $f_1$ and $f_2$ correspond to the inclusion maps.
For $g\in \Gamma$ consider the morphism
\[
X_{\Gamma_1\cap g^{-1}\Gamma_2 g} \too X_{\Gamma_1} \times X_{\Gamma_2},\quad [x] \mapsto ([x], g[x]).
\]
Their disjoint union induces a closed embedding
\[
\coprod_{g\in \Gamma_2\backslash \Gamma /\Gamma_1}
X_{\Gamma_1\cap g^{-1}\Gamma_2 g} \intoo X_{\Gamma_1}\times X_{\Gamma_2}
\]
whose image is the fibre product of $X_{\Gamma_1}$ and $X_{\Gamma_2}$ over $X_\Gamma$.
This proves the first claim.
The second claim also follows directly from this description of the fibre product.
\end{proof}

\begin{definition}
Let $X$ and $X'$ be two Mumford varieties over $(F_S,\iota)$.
A Hecke correspondence from $X$ to $X'$ consists of a Mumford variety $\widetilde{X}$ over $(F_S,\iota)$ and together with a surjective morphism $p\colon \widetilde{X} \to X$ and a morphism $q\colon\widetilde{X'}\to X$.
Given two Hecke correspondences $(\widetilde{X},p,q)\colon X\to X'$ and $(\widetilde{X'},p',q')\colon X'\to X''$  we define their composition via
\[
\big(\widetilde{X'},p',q'\big)\circ \big(\widetilde{X},p,q\big):=\big(\widetilde{X}\times_{X'}\widetilde{X'}, p\circ\pi_{\widetilde{X}},q'\circ \pi_{\widetilde{X'}}\big),
\]
where $\pi_{\widetilde{X}}\colon \widetilde{X}\times_{X'}\widetilde{X'} \to \widetilde{X}$ and $\pi_{\widetilde{X'}}\colon \widetilde{X}\times_{X'}\widetilde{X'} \to \widetilde{X'}$ denote the projection morphisms.
By Proposition \ref{fibprod} the composition of Hecke correspondences is well-defined.
\end{definition}

Write $\mrm{Mum}_{/(F_S,\iota)}$ for the category of Mumford varieties over $(F_S,\iota)$ with Hecke correspondences as morphism.
Let $\Aut(\C/F_S)$-$\mrm{Mod}$ (resp.~$\Aut(\C/F_S)$-$\mrm{Mod}^{\mrm{ct}}$) be the category of $\Aut(\C/F_S)$-modules (resp.~continuous $\Aut(\C/F_S)$-modules).
Finally, denote by
\[
\nu\colon \Aut(\C/F_S)\mbox{-}\mrm{Mod}^{\mrm{ct}}\longrightarrow \Aut(\C/F_S)\mbox{-}\mrm{Mod}
\]
the forgetful functor.
The following theorem summarizes what we have proven so far:
\begin{theorem}
Let $(F_S,\iota)$ be an embedded local \'etale algebra.
Assigning the group of $0$-cycles of degree $(0,\ldots,0)$ (resp.~the plectic Jacobian) to a Mumford variety over $(F_S,\iota)$ define the functors
\begin{align*}
\mathcal{Z}_{\plectic}\colon \mrm{Mum}_{/(F_S,\iota)} &\too \Aut(\C/F_S)\mbox{-}\mrm{Mod},\\
\mathcal{J}_{\plectic}\colon \mrm{Mum}_{/(F_S,\iota)} &\too \Aut(\C/F_S)\mbox{-}\mrm{Mod}^{\mrm{ct}}.
\end{align*}
Moreover, the plectic Abel--Jacobi map is natural transformation
\[
\mrm{AJ}\colon \mathcal{Z}_{\plectic} \Longrightarrow \nu\circ\mathcal{J}_{\plectic}.
\]
\end{theorem}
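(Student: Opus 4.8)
The plan is to first make explicit how a Hecke correspondence acts on the two functors, then to verify the functor axioms, from which naturality of the Abel--Jacobi map follows formally. To a Hecke correspondence $(\widetilde{X},p,q)\colon X\to X'$ over $(F_S,\iota)$ I would assign the composite homomorphisms $q_\ast\circ p^\ast$, both on groups of $0$-cycles of degree $(0,\ldots,0)$ and on plectic Jacobians. This is well-defined: every morphism of Mumford varieties is finite, its connected components being of the form $f(g)$ with $g\Gamma g^{-1}\subseteq\Gamma'$ of finite index, so the transfer $p^\ast$ exists, and $p^\ast,p_\ast,q^\ast,q_\ast$ were shown in Section \ref{Hecke} to be $\Aut(\C/F_S)$-equivariant; on plectic Jacobians they are moreover continuous, being induced by $\Z$-linear maps between the finitely generated free groups $H_{-,S}$ after tensoring with $\C_{S,\otimes}^\times$ and passing to the quotients by the closed subgroups $\overline{\Lambda}$. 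The identity correspondence $(X,\id_X,\id_X)$ visibly induces $\id_\ast\circ\id^\ast=\id$. Granting compatibility with composition, naturality of $\mrm{AJ}$ is then immediate from Lemma \ref{functorial}: for $(\widetilde{X},p,q)$ one has $\alpha(X')\circ q_\ast\circ p^\ast=q_\ast\circ\alpha(\widetilde{X})\circ p^\ast=q_\ast\circ p^\ast\circ\alpha(X)$.

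The substance of the proof is therefore compatibility with composition. For Hecke correspondences $(\widetilde{X},p,q)\colon X\to X'$ and $(\widetilde{X'},p',q')\colon X'\to X''$, whose composite is $\big(\widetilde{X}\times_{X'}\widetilde{X'},\,p\circ\pi_{\widetilde{X}},\,q'\circ\pi_{\widetilde{X'}}\big)$, the attached homomorphism is $q'_\ast\circ(\pi_{\widetilde{X'}})_\ast\circ(\pi_{\widetilde{X}})^\ast\circ p^\ast$, using that pushforward and pullback are functorial for honest morphisms --- they are the canonical map, respectively the transfer, on coinvariants. Everything thus reduces to the base-change identity
\[
(\pi_{\widetilde{X'}})_\ast\circ(\pi_{\widetilde{X}})^\ast=(p')^\ast\circ q_\ast
\]
for the Cartesian square formed by $q\colon\widetilde{X}\to X'$, $p'\colon\widetilde{X'}\to X'$ and the two projections out of $\widetilde{X}\times_{X'}\widetilde{X'}$, as maps on $\mathcal{Z}_{\plectic}$ and on $H_{-,S}$; granting it, the attached homomorphism becomes $\big(q'_\ast\circ(p')^\ast\big)\circ\big(q_\ast\circ p^\ast\big)$, which is exactly the composite of the two assigned maps.

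To prove this base-change identity I would invoke the explicit fibre product of Proposition \ref{fibprod}. After conjugating --- an isomorphism of Mumford varieties, for which the identity is trivial --- one may assume $X'=X_\Gamma$, $\widetilde{X}=X_{\Gamma_1}$, $\widetilde{X'}=X_{\Gamma_2}$ with $\Gamma_1,\Gamma_2\subseteq\Gamma$ of finite index and $q,p'$ induced by the inclusions (the general case only permutes double cosets). Then $\widetilde{X}\times_{X'}\widetilde{X'}=\coprod_{g\in\Gamma_2\backslash\Gamma/\Gamma_1}X_{\Gamma_1\cap g^{-1}\Gamma_2 g}$, with $\pi_{\widetilde{X}}$ on the $g$-th piece the inclusion $\Gamma_1\cap g^{-1}\Gamma_2 g\hookrightarrow\Gamma_1$ and $\pi_{\widetilde{X'}}$ there the map $[x]\mapsto g[x]$, corresponding to the conjugation homomorphism $\delta\mapsto g\delta g^{-1}$ from $\Gamma_1\cap g^{-1}\Gamma_2 g$ onto $g\Gamma_1 g^{-1}\cap\Gamma_2\subseteq\Gamma_2$. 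Unwinding the canonical map on coinvariants, the transfer, and the conjugation isomorphism $\St_{g\Gamma g^{-1},S}=\St_{\Gamma',S}$ (which rests on $\mathcal{L}_{g\Gamma g^{-1},S}=\mathcal{L}_{\Gamma',S}$, cf.~Lemmas \ref{finiteindex} and \ref{action}), both sides of the identity become the Mackey double-coset formula in homological degree zero for $\Gamma_1,\Gamma_2\subseteq\Gamma$, applied to the $\Gamma$-modules $\mathcal{Z}_{\plectic}(\Omega_\Gamma)$ and $\St_{\Gamma,S}$ respectively; the bijection underlying that formula is the coset decomposition $\Gamma_2 g\Gamma_1=\coprod_\delta\Gamma_2 g\delta$. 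Since the quotient maps defining $\mathcal{Z}_{\plectic}(X_\Gamma)$ out of $\mathcal{Z}_{\plectic}(\Omega_\Gamma)_\Gamma$, and $H_{\Gamma,S}$ out of $(\St_{\Gamma,S})_\Gamma$, are functorial in $\Gamma$, the identity descends to these groups and hence to the plectic Jacobians. I expect this last verification to be the main obstacle: carefully matching the geometric fibre-product decomposition of Proposition \ref{fibprod}, with the correct projection map on each double-coset component, to the algebraic Mackey formula, uniformly for the two module systems from which $\mathcal{J}_{\plectic}$ is built.
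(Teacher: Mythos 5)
Your proof is correct and takes the same approach as the paper, which simply declares the theorem ``summarizes what we have proven so far'' and leaves the verification of compatibility with composition to the reader. Your identification of the Mackey double-coset / base-change identity $(\pi_{\widetilde{X'}})_\ast\circ(\pi_{\widetilde{X}})^\ast=(p')^\ast\circ q_\ast$ as the crux, and its proof by matching the explicit fibre-product decomposition of Proposition \ref{fibprod} against the algebraic Mackey formula for the module systems $\mathcal{Z}_{\plectic}(\Omega_\Gamma)$ and $\St_{\Gamma,S}$, is precisely the verification the paper elides, and the deduction of naturality of the Abel--Jacobi map from Lemma \ref{functorial} likewise agrees with the paper's intent.
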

 
\begin{remark}
Instead of working with Mumford varieties over a fixed embedded local \'etale algebra one could work with the category of all Mumford varieties over $\C$.
In this setting, Corollary \ref{monoidalcor} could be reformulated as saying that the Abel--Jacobi map is a natural transformation between symmetric monoidal functors.
\end{remark}

%%%%%%%%%%%%%%%%%%%%%%%%%%%%%%%%%%%%%%%%%%%%%%%%%%%%%%%%%%%%%%%%%%%%%%%%
% Modularity
%%%%%%%%%%%%%%%%%%%%%%%%%%%%%%%%%%%%%%%%%%%%%%%%%%%%%%%%%%%%%%%%%%%%%%%%

\section{Modularity}\label{Sec: Modularity}
Let $\overline{\Q}\subseteq\bb{C}$ denote the algebraic closure of $\Q$ contained in the field of complex numbers.
For  a totally real field $F$ of degree $d$ we consider the set of its embeddings $\Sigma_\infty:=\Hom_\Q(F,\overline{\Q})$ into $\overline{\Q}$ and the set $S_p$ of its $p$-adic places.
Let $\C_p$ denote the completion of a fixed algebraic closure of $\Q_p$.
The choice of an embedding $\iota_p\colon\overline{\Q}\hookrightarrow\C_p$ determines a map
\[
\theta\colon\Sigma_\infty\too S_p 
\]
sending $\sigma\in \Hom_\Q(F,\overline{\Q})$ to the $p$-adic place $\p_\sigma$ induced by the composition $\iota_p\circ\sigma$.
Let $\Sigma\subseteq \Sigma_\infty$ be a non-empty subset such that  $S:=\theta(\Sigma)$ has the same cardinality of $\Sigma$.
The fixed embedding $\iota_p$ induces an embedding on the local \'etale algebra
\[
F_S:= \prod_{\p \in S} F_{\p}.
\]
Set $p_S:=\prod_{\p\in S} \p$. Consider a square-free ideal $\n^{-}$ coprime to $p_S$, and write $\omega(\n^{-})$ for its number of prime factors.
Suppose that $\omega(\n^{-})\equiv d\pmod{2}$,
then there exists a quaternion algebra $B/F$ ramified exactly at the places in
\[
S\cup \left\{\mathfrak{q}\mid\n^{-}\right\}\cup\left(\Sigma_\infty\setminus\Sigma\right).
\]
Let $\mathcal{H}^\pm=\PP(\bb{C})\setminus\PP(\R)$ denote the union of the upper and lower complex half planes. For every $\sigma\in\Sigma$ we choose an isomorphism $B_\sigma:=B\otimes_{F,\sigma}\R\cong \mrm{M}_2(\R)$ so that the
diagonal embedding of $B$ into $\prod_{\sigma\in\Sigma}B_\sigma$ induces an action of $B^\times$ on $\mathcal{H}_\Sigma^\pm:=\prod_{\sigma\in\Sigma} \mathcal{H}^\pm$. Let $E_{\Gal}$ denote the Galois closure of $\sigma(F)$ in $\overline{\Q}$ for some $\sigma\in \Sigma$, and  consider
\[
G_{\Sigma}:=\big\{\tau\in \Gal(E_\mrm{Gal}/\Q)\ \big\vert\ \tau\circ\Sigma=\Sigma\big\},
\qquad
E:=E_{\Gal}^{G_{\Sigma}}.
\]

\subsection{Quaternionic Shimura varieties }\label{Hilbert}
Denote by $\bb{A}_f$ the ring of finite adeles over $\Q$ and $K\subseteq B^\times(\bb{A}_f)$ be a compact open subgroup such that $\mrm{nr}(K)=\widehat{\mathcal{O}}_F^\times$.
The quaternionic Shimura variety
\[
X_K(\C)=B^\times\backslash \left(\mathcal{H}_\Sigma^\pm\times B^\times(\bb{A}_f)\right) /K
\]
has a canonical model over the reflex field $E$. 
In the following we only consider level structures of the form $K=\prod_{\q}K_\q$, where $\q$ runs through the finite places of $F$, such that $K_\p$ is the unique maximal compact subgroup of $B_\p$ for all $\p\in S$.
We are mostly interested in level structures of type $K_0(\n^+)$ where $\n^+\subseteq \mathcal{O}_F$ is an ideal coprime to $p_S\n^-$ and
\begin{itemize}
\item[$\bfcdot$] $K=\prod K_\q$ with $K_\q$ maximal compact for $q\nmid\n^+$,
\item[$\bfcdot$] $K_\q$ corresponds to the group of integral matrices that are congruent to an upper triangular matrix mod $\n^+$ under an isomorphism $B_\q\cong \mrm{M}_2(K_\q)$ for every $\q\mid \n^+$.
\end{itemize}

By definition, the reflex field is a subfield of $\overline{\Q}$.
Thus, the embedding $\iota_p$ induces a $p$-adic place $v$ on $E$.
The $E_v$-rigid analytic space associated to $X_K$ admits a $p$-adic uniformization  by \cite[Section 5]{Varshavsky}.
Concretely, let $D/F$ be the totally definite quaternion algebra obtained from $B/F$ by switching invariants at $S$ and $\Sigma$, and choose an isomorphism $D_\p\cong \mrm{M}_2(F_\p)$ for every $\p\in S$. Diagonally embedding $D$ into $\prod_{\p\in S} D_\p$ induces an action of $D^\times$ on $\Omega_S:=\prod_{\p\in S}\Omega_\p$, where $\Omega_\p=\PP(\C_p)\setminus\PP(F_\p)$ denotes the Drinfeld's upper half plane associated to $F_\p$.
Moreover, we identify $D(\bb{A}^S_f)$ and $B(\bb{A}^S_f)$ where $\bb{A}^{S}_F$ denotes the ring of finite adeles away from $S$.
This induces an action of $D^\times$ on $B(\bb{A}^S_f)$.
Moreover, $D^\times$ acts on $B(F_\p)/K_\p\cong \Z$ via translation by the $p$-adic valuation of the determinant.
The $p$-adic uniformization theorem then states that there is an isomorphism of rigid analytic varieties
\[
X_{K(\n)}(\C_p)\cong D^\times\backslash\big(\Omega_S\times B(\bb{A}_f)/K\big).
\]
Furthermore, the group of connected components is given by the narrow class group 
$\mrm{Cl}^+_F$ of $F$.
Let $\{g_i\}_i\subset B(\bb{A}^S)$ be elements whose reduced norms form a system of representatives for $\mrm{Cl}^+_F$.
The images of the groups $\Gamma_i=g_i\cdot K\cdot g_i^{-1}\cap D^\times$ in $\PGL_2(F_S)$ are cocompact subgroups and thus plectic.
We may write
\begin{align}\label{classgroup}
X_K(\C_p)\cong \coprod_{i=1}^{h_F^+}X_{\Gamma_i}.
\end{align}
In particular, $X_K$ carries the structure of a Mumford variety over $(F_S,\iota)$.

\subsection{Geometric modularity}
Let $A_{/F}$ be an elliptic curve with multiplicative reduction at all $\p\in S$.
Let $A_\p$ denote the analytification of $A_{F_\p}$ seen as a rigid analytic variety over $\C_p$.
By Tate's uniformization theorem there exists a unique element $q_\p \in F_\p^\times$ with positive $p$-adic valuation such that
\[
A_\p \cong \C_p^\times /q_\p^\Z.
\]
In particular, $A_\p$ is a Mumford curve and, thus, the product
\[
A_S:=\prod_{\p\in S} A_\p
\]
is a Mumford variety over $(F_S,\iota_p)$.
More generally, given a Tate period $\widetilde{q}_\p\in F_\p^\times$ for every $\p\in S$ we denote by $A_{\widetilde{q}_\p}$ the corresponding elliptic curve over $\C_p$.
By Theorem \ref{monoidalthm}, the plectic Jacobian of the product $A_{\widetilde{q}_S}:=\prod_{\p \in S} A_{\widetilde{q}_p}$ is simply the completed tensor product of the groups $A_\p(\C_p)$ .
In particular, there exists a canonical surjective homomorphism
\[
\phi_{\widetilde{q}_S}\colon \C_{S,\otimes}^\times\too \mathcal{J}_\plectic(A_{\widetilde{q}_S}).
\]

\begin{theorem}\label{modtheorem}
Let $A_{/F}$ be a modular elliptic curve of conductor $N=p_S\cdot\n^{+}\cdot\n^{-}$ for some ideal $\n^{+}\subseteq \mathcal{O}_F$ coprime to $p_S\cdot\n^{-}$.
There is a surjective $\Aut(\C_p/F_S)$-equivariant homomorphism
\[
\varphi_A\colon\mathcal{J}_{\plectic}(X_{K(\n^{+})})\too \mathcal{J}_{\plectic}(A_S).
\]
\end{theorem}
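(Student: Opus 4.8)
By the uniformization \eqref{classgroup} write $X_{K(\n^{+})}=\coprod_{i=1}^{h_F^{+}}X_{\Gamma_i}$ with $\Gamma_i\subseteq\PGL_2(F_S)$ discrete cocompact, hence normal plectic, so that $\mathcal{J}_{\plectic}(X_{K(\n^{+})})=\bigoplus_i\big(H_{\Gamma_i,S}\otimes_\Z\C_{S,\otimes}^{\times}\big)/\overline{\Lambda_{\Gamma_i}}$; by Theorem \ref{monoidalthm} the target is $\mathcal{J}_{\plectic}(A_S)=\widehat{\bigotimes}_{\p\in S}\big(\C_p^{\times}/q_\p^{\Z}\big)$, realized as a quotient of $\C_{S,\otimes}^{\times}$ through $\phi_{q_S}$. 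The plan is then: (a) attach to $A$ a surjective Hecke-equivariant homomorphism $m=(m_i)_i\colon\bigoplus_iH_{\Gamma_i,S}\twoheadrightarrow\Z$; (b) prove that the composite $\phi_{q_S}\circ(m\otimes\id)\colon\bigoplus_i\big(H_{\Gamma_i,S}\otimes_\Z\C_{S,\otimes}^{\times}\big)\to\mathcal{J}_{\plectic}(A_S)$ annihilates $\bigoplus_i\overline{\Lambda_{\Gamma_i}}$, so that it descends to the desired $\varphi_A$; and (c) observe that $\varphi_A$ is then automatically surjective, since $m$ surjective onto $\Z$ makes $m\otimes\id$ surjective onto $\C_{S,\otimes}^{\times}$, and $\phi_{q_S}$ is surjective by construction. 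The $\Aut(\C_p/F_S)$-equivariance will require no work: this group acts trivially on each $H_{\Gamma_i,S}$, while the universal measures $\mu_{\Gamma_i}^{S}$ (hence the multiplicative integrals) and $\phi_{q_S}$ are equivariant by construction.

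\textbf{The eigenform map.} Since $A$ is modular with multiplicative reduction at every $\p\in S$ and at every $\q\mid\n^{-}$, its associated Hilbert newform $f_A$ has level $\Gamma_0(p_S\n^{+}\n^{-})$, parallel weight two, trivial central character, and is Steinberg at each place of $S\cup\{\q\mid\n^{-}\}$; as $\omega(\n^{-})\equiv d\pmod 2$, the Jacquet--Langlands correspondence transfers it to an automorphic form on $B^{\times}$ of level $K(\n^{+})$. Via the uniformization \eqref{classgroup} and Proposition \ref{duality}, the $f_A$-eigensystem occurs in the middle-degree cohomology of $X_{K(\n^{+})}$, that is in $\Hom_\Z\big(\bigoplus_iH_{\Gamma_i,S},\Q\big)$, with one-dimensional eigenspace by multiplicity one for $\GL_2$ over $F$ together with Jacquet--Langlands. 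Dualizing this eigenprojection and saturating --- which is legitimate because $\bigoplus_iH_{\Gamma_i,S}$ is finitely generated and free by Corollary \ref{fingen} --- produces the surjection $m$. Post-composing $m\otimes\id$ with $\phi_{q_S}$ gives the candidate $\varphi_A$.

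\textbf{Well-definedness --- the main obstacle.} The heart of the argument is step (b): for each $i$ and each $D\in\ker(\Theta_{\Gamma_i})$ one must show $(m_i\otimes\id)\big(\mint_D\omega_{\mu_{\Gamma_i}^{S}}\big)=\mint_D\omega_{(m_i)_\ast\mu_{\Gamma_i}^{S}}$ lies in $\ker(\phi_{q_S})$; by continuity it suffices to treat $\Lambda_{\Gamma_i}$ rather than its closure. This is precisely the assertion that the $f_A$-isotypic period lattice of $X_{K(\n^{+})}$ is matched, slot by slot, with the Tate periods $q_\p$ of $A$ --- the analytic shadow of geometric modularity. When $|S|=1$ it is classical: $X_{\Gamma_i}$ is a Shimura curve, $\mathcal{J}_{\plectic}(X_{\Gamma_i})$ is its Jacobian, and the algebraic modular parametrization $\mathcal{J}_{\plectic}(X_{\Gamma_i})\to A$ is compatible with the $p$-adic uniformizations of source and target by rigid-analytic GAGA. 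For general $S$ there is no ambient abelian variety, so I would argue by dévissage on $|S|$: replacing $(\Sigma,S)$ by $(\Sigma\setminus\{\sigma\},S\setminus\{\p\})$ for a chosen $\p\in S$ and $\sigma\in\Sigma$ with $\theta(\sigma)=\p$ yields a quaternionic Shimura variety of dimension $|S|-1$ that is $p$-adically uniformized by the \emph{same} totally definite algebra $D$ but with one fewer Drinfeld factor; the groups uniformizing it arise from the $\Gamma_i$ by the partial-intersection construction $\Gamma\mapsto\Gamma_U$ (Proposition \ref{induction} and Lemma \ref{normalintersect}), and the universal measures are related by integrating out the $\p$-variable. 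Pushing the modularity of $A$ through this recursion, while tracking the $\p$-local Tate period at each stage, reduces the claim to the case $|S|=1$. I expect the bookkeeping in this dévissage --- making the comparison of universal measures across dimensions precise, and controlling the isogeny ambiguity between $A$ and the optimal $f_A$-quotient --- to be the technically demanding point; once it is in place, (c) finishes the proof.
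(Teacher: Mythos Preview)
Your steps (a) and (c) match the paper's argument: the Hecke quotient $m\colon\bigoplus_iH_{\Gamma_i,S}\twoheadrightarrow\Z$ is exactly the rank-one quotient $H_A$ the paper extracts from modularity and multiplicity one, and surjectivity and $\Aut(\C_p/F_S)$-equivariance follow for the reasons you give.

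The divergence is entirely at step (b), and here there is a genuine gap. The paper does \emph{not} argue by induction on $|S|$; it imports the period matching wholesale from \cite[Theorem~4.10]{PlecticHeegner}, which produces Tate periods $\widetilde{q}_\p$ that are only \emph{commensurable} with the true $q_\p$, and then passes to $A_S$ via a choice of isogenies $A_{\widetilde{q}_\p}\to A_\p$. Your stronger claim that $\phi_{q_S}\circ(m\otimes\id)$ already annihilates $\Lambda_{\Gamma_i}$ on the nose is not what is established, and the isogeny step is not optional bookkeeping---it absorbs exactly the discrepancy you would otherwise have to control. Your d\'evissage sketch does not close: you assert that ``the universal measures are related by integrating out the $\p$-variable'', but no such comparison between $\Lambda_{\Gamma_i}$ and $\Lambda_{(\Gamma_i)_U}$ is proved in the paper (nor is it obvious), and without it the induction has no inductive step. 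Note also that your appeal to Lemma~\ref{normalintersect} is misplaced: that lemma requires $\mbf{S}_\Gamma\subseteq S_1$, whereas for cocompact $\Gamma_i$ one has $\mbf{S}_{\Gamma_i}=S\not\subseteq S\setminus\{\p\}$; the normality of $(\Gamma_i)_U$ here comes instead from cocompactness and Proposition~\ref{cocompact}.

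In short, the architecture is right, but the load-bearing wall in (b) is the external input \cite[Theorem~4.10]{PlecticHeegner}, not an internal induction; your proposal replaces a citation by a sketch whose key comparison is left unproved.
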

\begin{proof}
This is mostly just a reformulation of the results of \cite[Section 4.5]{PlecticHeegner}.
We sketch the main points.
First, let $\Gamma_i\subseteq\PGL_2(F_S)$ be the subgroups defined in \eqref{classgroup}.
The Hecke algebra acts on the direct sum $\oplus_{i}H_{\Gamma_i,S}$.
The modularity of $A_{/F}$ implies that there is a unique $\Z$-free quotient $H_A$ of $\oplus_{i}H_{\Gamma_i,S}$ on which the Hecke algebra acts via the character associated to $A$.
Moreover, multiplicity one implies that $H_A$ has rank one.
The choice of a generator of $H_A$ then yields a homomorphism
\begin{align}\label{quotient}
\bigoplus_{i=1}^{h_F^+} \big(H_{\Gamma_i,S}\otimes_\Z \C^\times_{S,\otimes}\big)\too \C^\times_{S,\otimes}
\end{align}
that in turn induces the homomorphism
\[
\psi_A\colon \bigoplus_{i=1}^{h_F^+}\mathcal{Z}_{\plectic}(\Omega_{\Gamma_i})_{\Gamma_i} \too \C^\times_{S,\otimes}
\]
through $p$-adic integration. Now, \cite[Theorem 4.10]{PlecticHeegner} implies that there exist Tate periods $\widetilde{q}_\p\in F_\p$ for every $\p\in S$ with the following properties:
\begin{enumerate}[(a)]
\item[$\bfcdot$] the subgroups $q_\p^{\Z}$ and $\widetilde{q}_\p^{\Z}$ are commensurable, and
\item[$\bfcdot$] the homomorphism
\[
\phi_{\widetilde{q}_A}\circ\psi_A\colon\bigoplus_{i=1}^{h_F^+} \mathcal{Z}_{\plectic}(\Omega_{\Gamma_i})_{\Gamma_i}\too  \mathcal{J}_{\plectic}(A_{\widetilde{q}_S})
\]
factors through $\mathcal{Z}_{\plectic}(X_{K(\n^{+})})$ and hence through $\mathcal{J}_{\plectic}(X_{K(\n^{+})})$.
\end{enumerate}
By choosing an isogeny $A_{\widetilde{q}_\p}\to A_\p$ for each $\p\in S$, we obtain the claimed homomorphism
\[
\mathcal{J}_{\plectic}(X_{K(\n^{+})})\too \mathcal{J}_{\plectic}(A_{S}).
\]
\end{proof}

\begin{remark}
The Hecke module $\oplus_{i} H_{\Gamma_i,S}$ is isomorphic to the space of cusp forms on $X_{K(\n^{+})}$.
Although the results of \cite{PlecticHeegner} were only stated in the case of eigenforms corresponding to elliptic curves, one can easily adapt the methods to arbitrary eigenforms.
In particular, Theorem \ref{modtheorem} can be reformulated as follows:
the plectic Jacobian of $X_{K(\n^{+})}$ embeds into the plectic Jacobian of a product of Shimura curves $\{X_\p\}_{\p\in S}$ with each $X_\p$ admitting a $p$-adic uniformization by a $\p$-arithmetic subgroup of the definite quaternion algebra $D$.
More precisely, the level structure used to define $X_\p$ is $K_0(p_S \p^{-1}n^+)$.
This observation suggests the following question:
let $X_\Gamma$ be a connected Mumford variety over an embedded local \'etale algebra $(F_S,\iota)$.
Do Mumford curves $\{X_\p\}_{\p\in S}$ exists such that
\begin{itemize}
\item[$\bfcdot$] the connected components of each $X_\p$ are connected Mumford curve associated to groups of the form $\Gamma_U$ for an
open compact subgroup $U\subseteq \PGL_2(F_{S\setminus\{\p\}})$, 
\item[$\bfcdot$] and there exists an $\Aut(\C/F_S)$-equivariant embedding
\[
\mathcal{J}_{\plectic}(X_\Gamma) \intoo \mathcal{J}_{\plectic}(\prod_{\p \in S} X_\p)?
\]
\end{itemize}
If such a collection of Mumford curves existed, $X_{\Gamma}$ could be seen as a braided version of the product $\prod_{\p\in S} X_\p$ -- at least, on the level of plectic Jacobians -- reflecting the fact that $\Gamma$ can be thought of as a braided version of the product $\prod_{\p\in S}\Gamma_\p$.
\end{remark}

%%%%%%%%%%%%%%%%%%%%%%%%%%%%%%%%%%%%%%%%%%%%%%%%%%%%%%
% BIBLIOGRAPHY
%%%%%%%%%%%%%%%%%%%%%%%%%%%%%%%%%%%%%%%%%%%%%%%%%%%%%%

\bibliography{Jacobians}
\bibliographystyle{alpha}

\end{document}